\RequirePackage{etex}
\pdfoutput=1
%%%%%%%%%%%%%%%%%%%%%%%%%%%%%%%%%%%%%%%%
% Some problems on the $[x/n]$-sequence
%%%%%%%%%%%%%%%%%%%%%%%%%%%%%%%%%%%%%%%%
\documentclass{amsart}
%%%%%%%%%%%%%%%%%%%%%%%%%%%%%%%%%%%%%%%%
%Packages
\usepackage{mathtools}
\usepackage{amssymb}
\usepackage{xparse}
%External reference
\usepackage{xr-hyper}
%For convenient array options
\usepackage{array}
%For removing shift in amsmath:

%For the positioning of subscript
\usepackage{subdepth}
%Special characters in text-mode
\usepackage{bm,bbm}
%Special characters in math-mode
\usepackage{mathrsfs}
\usepackage{stmaryrd}
\usepackage[scr=boondoxo]{mathalfa}

%%%%%%%%%%%%%%%%%%%%%%%%%%%%%%%%%%%%%%%%%%%%%%%%%%
%geometry
\usepackage[centering, footskip=6mm]{geometry}

%%%%%%%%%%%%%%%%%%%%%%%%%%%%%%%%%%%%%%%%%%%%%%%%%%
%mdframe
\usepackage{xpatch}
\usepackage{mdframed}

%Fix skipbelow bug
\makeatletter
\xpatchcmd{\endmdframed}
  {\aftergroup\endmdf@trivlist\color@endgroup}
  {\endmdf@trivlist\color@endgroup\@doendpe}
  {}{}
\makeatother

%%%%%%%%%%%%%%%%%%%%%%%%%%%%%%%%%%%%%%%%
%iftex: Check the running latex engine
\usepackage{iftex}

%%%%%%%%%%%%%%%%%%%%%%%%%%%%%%%%%%%%%%%%
%hyperref
\ifpdf
\usepackage[hypertexnames=false]{hyperref}
\else
\usepackage[dvipdfmx,hypertexnames=false]{hyperref}
\fi

%%%%%%%%%%%%%%%%%%%%%%%%%%%%%%%%%%%%%%%%
%Clever reference
\usepackage[nameinlink]{cleveref}

%%%%%%%%%%%%%%%%%%%%%%%%%%%%%%%%%%%%%%%%%%%%%%%%%%
\usepackage{etoolbox}
\makeatletter
\patchcmd\@thm
  {\let\thm@indent\indent}{\let\thm@indent\noindent}%
  {}{}
\expandafter\patchcmd\csname\string\proof\endcsname
  {\normalparindent}{0pt }{}{}
\makeatother

\parindent=12pt

%%%%%%%%%%%%%%%%%%%%%%%%%%%%%%%%%%%%%%%%
%Theorem environments
\makeatletter
\renewcommand\th@plain{\slshape}
\makeatother
%%%%%%%%%%%%%%%%%%%%%%%%%%%%%%%%%%%%%%%%%%%%%%%%%%
\newtheoremstyle{plain}% <name>
  {-\topsep}%                 <space above>
  {}%                         <space below>
  {\slshape}%              <body font>
  {}%                         <indent amount>
  {\sffamily\bfseries}%                <theorem head font>
  {.}%                        <punctuation after theorem head>
  {.5em}%                     <space after theorem head>
  {}%                         <theorem head spec>
\theoremstyle{plain}
\newtheorem{theorem}{Theorem}
\newtheorem{pretheorem}{Theorem}

\newtheorem{corollary}[theorem]{Corollary}
\newtheorem{lemma}[theorem]{Lemma}
\newtheorem{proposition}[theorem]{Proposition}

\newtheorem*{claim*}{Claim}

\surroundwithmdframed[skipabove=\medskipamount,skipbelow=\medskipamount]{theorem}
\surroundwithmdframed[skipabove=\medskipamount,skipbelow=\medskipamount]{pretheorem}
\surroundwithmdframed[skipabove=\medskipamount,skipbelow=\medskipamount]{corollary}
\surroundwithmdframed[skipabove=\medskipamount,skipbelow=\medskipamount]{lemma}
\surroundwithmdframed[skipabove=\medskipamount,skipbelow=\medskipamount]{proposition}
\surroundwithmdframed[skipabove=\medskipamount,skipbelow=\medskipamount]{claim}
\surroundwithmdframed[skipabove=\medskipamount,skipbelow=\medskipamount]{claim*}

%%%%%%%%%%%%%%%%%%%%%%%%%%%%%%%%%%%%%%%%%%%%%%%%%%
\newtheoremstyle{definition}% <name>
  {-\topsep}%                 <space above>
  {}%                         <space below>
  {\normalfont}%              <body font>
  {}%                         <indent amount>
  {\sffamily\bfseries}%                <theorem head font>
  {.}%                        <punctuation after theorem head>
  {.5em}%                     <space after theorem head>
  {}%                         <theorem head spec>
\theoremstyle{definition}

\newtheorem{definition}[theorem]{Definition}

\newtheorem{remark}[theorem]{Remark}
\newtheorem{example}[theorem]{Example}
\newtheorem{conjecture}[theorem]{Conjecture}

\surroundwithmdframed[skipabove=\medskipamount,skipbelow=\medskipamount]{exercise}
\surroundwithmdframed[skipabove=\medskipamount,skipbelow=\medskipamount]{definition}
\surroundwithmdframed[skipabove=\medskipamount,skipbelow=\medskipamount]{notation}
\surroundwithmdframed[skipabove=\medskipamount,skipbelow=\medskipamount]{problem}
\surroundwithmdframed[skipabove=\medskipamount,skipbelow=\medskipamount]{question}
\surroundwithmdframed[skipabove=\medskipamount,skipbelow=\medskipamount]{note}
\surroundwithmdframed[skipabove=\medskipamount,skipbelow=\medskipamount]{remark}
\surroundwithmdframed[skipabove=\medskipamount,skipbelow=\medskipamount]{example}
\surroundwithmdframed[skipabove=\medskipamount,skipbelow=\medskipamount]{conjecture}

%%%%%%%%%%%%%%%%%%%%%%%%%%%%%%%%%%%%%%%%
%Cleveref setting
\crefname{section}{Section}{Sections}

\crefname{theorem}{Theorem}{Theorems}
\crefname{pretheorem}{Theorem}{Theorems}
\crefname{corollary}{Corollary}{Corollaries}
\crefname{lemma}{Lemma}{Lemmas}
\crefname{proposition}{Proposition}{Propositions}
\crefname{claim}{Claim}{Claims}
\crefname{definition}{Definition}{Definitions}
\crefname{notation}{Notation}{Notations}
\crefname{problem}{Problem}{Problems}
\crefname{question}{Question}{Questions}
\crefname{note}{Note}{Notes}
\crefname{remark}{Remark}{Remarks}
\crefname{example}{Example}{Examples}
\crefname{conjecture}{Conjecture}{Conjectures}

\crefname{enumi}{}{}
\crefname{enumii}{}{}
\crefname{enumiii}{}{}

\crefformat{equation}{{\upshape(#2#1#3)}}

%%%%%%%%%%%%%%%%%%%%%%%%%%%%%%%%%%%%%%%%
%Automatic numbering
\usepackage{autonum}

%Redefine align* (https://tex.stackexchange.com/q/304046)
\makeatletter
\newcommand{\restore@Environment}[1]{%
  \AtBeginDocument{%
    \csletcs{#1*}{#1}%
    \csletcs{end#1*}{end#1}%
  }%
}
\forcsvlist\restore@Environment{alignat,equation,gather,multline,flalign,align}
\makeatother

%%%%%%%%%%%%%%%%%%%%%%%%%%%%%%%%%%%%%%%%
%Indexing
%\usepackage[original]{imakeidx}
%\makeindex
%\makeindex[name = notation, title = Index of notation]

%%%%%%%%%%%%%%%%%%%%%%%%%%%%%%%%%%%%%%%%
%List
\usepackage{enumitem}
\setlist{leftmargin=20pt}
\setlist[enumerate]{label=\textup{(\roman*)}}

%%%%%%%%%%%%%%%%%%%%%%%%%%%%%%%%%%%%%%%%
%	TikZ
\ifpdf
\usepackage{graphicx}
\else
\usepackage[dvipdfmx]{graphicx}
\fi
\usepackage{tikz}
\usetikzlibrary{arrows.meta}
\usetikzlibrary{tikzmark}
\usetikzlibrary{cd}
\tikzcdset{ampersand replacement=\&}
\tikzcdset{
  cells={font=\everymath\expandafter{\the\everymath\displaystyle}},
}

%%%%%%%%%%%%%%%%%%%%%%%%%%%%%%%%%%%%%%%%
%Format settings
\allowdisplaybreaks

%%%%%%%%%%%%%%%%%%%%%%%%%%%%%%%%%%%%%%%%
%For overfull detection
\setlength{\overfullrule}{5pt}

%%%%%%%%%%%%%%%%%%%%%%%%%%%%%%%%%%%%%%%%
%External reference
%\externaldocument[ext:]{foo/bar}

%%%%%%%%%%%%%%%%%%%%%%%%%%%%%%%%%%%%%%%%
%%Endnotes
\newif\ifendnotes

%Switch the mode by using
%\endnotestrue
%or
%\endnotesfalse
\endnotesfalse

\ifendnotes
\usepackage{endnotes}
\fi

%%%%%%%%%%%%%%%%%%%%%%%%%%%%%%%%%%%%%%%%
%%Show keys
\newif\ifshowkeys

%Switch the mode by using
%\showkeystrue
%or
%\showkeysfalse
\showkeysfalse

\ifshowkeys
\usepackage{showkeys}

\makeatletter
  \SK@def\cref#1{\SK@\SK@@ref{#1}\SK@cref{#1}}%
\makeatother
\fi

%%%%%%%%%%%%%%%%%%%%%%%%%%%%%%%%%%%%%%%%
%User defined commands

%%Proof

%%Definition

%%Placeholder

%%Variant character
\let\tmp\phi
\let\phi\varphi
\let\varphi\tmp
\let\tmp\epsilon
\let\epsilon\varepsilon
\let\varepsilon\tmp

%%Blackboard style characters

\newcommand{\R}{\mathbb{R}}

%%Set theoretic symbol
\newcommand{\midmid}{\mathrel{}\middle|\mathrel{}}
\newcommand{\semicolon}{\nobreak\mskip2mu\mathpunct{}\nonscript
\mkern-\thinmuskip{;}\mskip6muplus1mu\relax}
\renewcommand{\subset}{\subseteq}

%%mod (renewed)
\renewcommand{\mod}[1]{(\mathrm{mod}\ #1)}

%%Complex analysis

%%General
\renewcommand{\and}{\quad\text{and}\quad}

%%debugged sideset
\makeatletter
\NewDocumentCommand{\xsideset}{mmme{_^}}{%
\mathop{%
% half width of #3
\settowidth{\dimen0}{$\m@th\displaystyle#3$}%
\dimen0=.5\dimen0
% half width of #3 with subscripts or superscripts,
% half width of #3 removed
\settowidth{\dimen2}{$%
\m@th\displaystyle#3%
\IfValueT{#4}{_{#4}}%
\IfValueT{#5}{^{#5}}%
$}%
\dimen2=.5\dimen2
\advance\dimen2 -\dimen0
% prescripts
\sbox6{\scriptspace\z@$\displaystyle{\vphantom{#3}}#1$}
% postscripts
\sbox8{\scriptspace\z@$\displaystyle{\vphantom{#3}}#2$}
% typeset the thing
\ifdim\wd6>\dimen2 \kern\dimexpr\wd6-\dimen2\relax\fi
{%
\mathop{\llap{\copy6}{\displaystyle#3}\rlap{\copy8}}\limits%
\IfValueT{#4}{_{#4}}%
\IfValueT{#5}{^{#5}}%
}%
\ifdim\wd8>\dimen2 \kern\dimexpr\wd8-\dimen2\relax\fi
}%
}
\makeatother

%%decorated summation

%%%%%%%%%%%%%%%%%%%%%%%%%%%%%%%%%%%%%%%%
\begin{document}

\title[{Some remarks on the $[x/n]$-sequence}]
{Some remarks on the $[x/n]$-sequence}
\author[K. Saito]{Kota Saito}
\author[Y. Suzuki]{Yuta Suzuki}
\author[W. Takeda]{Wataru Takeda}
\author[Y. Yoshida]{Yuuya Yoshida}
\keywords{Arithmetic functions, Integral part, Exponential sums, Exponent pair}
\subjclass{%
Primary: %
11N37. %%Asymptotic results on arithmetic functions
Secondary: %
11N25, %%Distribution of integers with specified multiplicative constraints
11N69, %%Distribution of integers in special residue classes
11L03, %%Trigonometric and exponential sums, general
11L07. %%Estimates on exponential sums
}
\maketitle

\newif\ifarXiv
\arXivfalse

%%%%%%%%%%%%%%%%%%%%%%%%%%%%%%%%%%%%%%%%
\ifarXiv
\begin{abstract}
After the work of Bordell\`{e}s, Dai, Heyman, Pan and Shparlinki (2018),
several authors studied the sum of the type
\[
\sum_{n\le x}a\biggl(\biggl[\frac{x}{n}\biggr]\biggr),
\]
i.e.\ the average of arithmetic function over the $[\frac{x}{n}]$-sequence.
The image of $[\frac{x}{n}]$ without multiplicity
\[
\mathscr{S}(x)
\coloneqq
\{
[x/n]
\mid
n\in[1,x]\cap\mathbb{Z}
\}
\]
is also studied by some authors after the work of Heyman~(2019).
In this paper, we give three remarks on this topic.
Firstly, we show that the exponent $\frac{1}{3}$
in the result
\[
\sum_{\substack{
n\in\mathscr{S}(x)\\
n\equiv a\ \mod{q}
}}
1
=
\frac{2\sqrt{x}}{q}
+
O\biggl(\biggl(\frac{x}{q}\biggr)^{\frac{1}{3}}\log x\biggr)
\quad\text{for $1\le q\le x^{\frac{1}{4}}(\log x)^{-\frac{3}{2}}$}
\]
of Wu and Yu~(2022) on the distribution of the $[\frac{x}{n}]$-sequence
in arithmetic progression can be replaced by an exponent effected by exponent pairs.
This can be done by a simple application of Dirichlet's hyperbola method.
Secondly, we prove an asymptotic formula for the number of primitive lattice points
with coordinates from the $[\frac{x}{n}]$-sequence.
Srichan~(2023) also claimed a result but his method indeed fails to give an asymptotic formula.
We introduce a simple averaging trick to recover the asymptotic formula
\[
\sum_{\substack{
m,n\in\mathscr{S}(x)\\
(m,n)=1
}}
1
=
\frac{4}{\zeta(2)}x
+
O(x^{\frac{1}{2}+\frac{k+\ell}{2k+2}}),
\]
for any exponent pair $(k,\ell)$,
which recovers and improves Srichan's result.
Thirdly, we introduce the ``multiplicative'' version of the Titchmarsh divisor problem:
\[
\sum_{n\le x}\Lambda(n)a\biggl(\biggl[\frac{x}{n}\biggr]\biggr)
\quad\text{and}\quad
\sum_{p\le x}a\biggl(\biggl[\frac{x}{p}\biggr]\biggr).
\]
When $a(n)$ is a ``small'' arithmetic function,
we can deal with these sums without difficulty as the preceding results.
When $a(n)=\phi(n)$, we can still deal with the sum with the von Mangoldt function
by the method of Zhai~(2020).
However, we find the asymptotic formula
\[
\sum_{p\le x}\phi\biggl(\biggl[\frac{x}{p}\biggr]\biggr)
\overset{\text{?}}{\sim}
\frac{1}{\zeta(2)}x\log\log x
\]
(or its disproof) seems to be out of reach of our current technology.
We just give upper and lower bounds for this problem,
the multiplicative Titchmarsh divisor problem with $\phi(n)$.
\end{abstract}
\else
%%%%%%%%%%%%%%%%%%%%%%%%%%%%%%%%%%%%%%%%
\begin{abstract}
After the work of Bordell\`{e}s, Dai, Heyman, Pan and Shparlinki (2018) and Heyman (2019), several authors studied the averages of arithmetic functions over the sequence $[x/n]$ and the integers of the form $[x/n]$. In this paper, we give three remarks on this topic. Firstly, we improve the result of Wu and Yu (2022) on the distribution of the integers of the form $[x/n]$ in arithmetic progressions by using a variant of Dirichlet's hyperbola method. Secondly, we prove an asymptotic formula for the number of primitive lattice points with coordinates of the form $[x/n]$, for which we introduce a certain averaging trick.
Thirdly, we study a certain ``multiplicative'' analog of the Titchmarsh divisor problem.  We derive asymptotic formulas for such ``multiplicative'' Titchmarsh divisor problems for ``small'' arithmetic functions and the Euler totient function with the von Mangoldt function.
However, it turns out that the average of the Euler totient function over the $[x/p]$-sequence seems rather difficult and we propose a hypothetical asymptotic formula for this average.
\end{abstract}
\fi

%%%%%%%%%%%%%%%%%%%%%%%%%%%%%%%%%%%%%%%%
\section{Introduction}
\label{sec:intro}
In the theory of arithmetic functions,
we frequently come across sums of the form
\[
\sum_{n\le x}a(n)\biggl[\frac{x}{n}\biggr]
\]
like when one expands the definition of Dirichlet convolution.
Suppose that you are half asleep in front of such an expression.
Then you might confuse the above sum with a rather funny expression
\begin{equation}
\label{dizzy_sum}
\sum_{n\le x}a\biggl(\biggl[\frac{x}{n}\biggr]\biggr).
\end{equation}
Even though such a joke-like appearance,
after Bordell\`{e}s, Dai, Heyman, Pan and Shparlinki~\cite{BDHPS} drew attention to it,
such sums over $[x/n]$-sequence have attracted some interests of experts recently.
Although the sum \cref{dizzy_sum} is easy to handle (e.g.\ see Theorem~1.2 of \cite{Wu_general})
if $a(n)$ is a ``small'' arithmetic function like those bounded as $a(n)\ll n^{\epsilon}$
and if one gets satisfied with the error term of the size $x^{\frac{1}{2}}$,
there are some more profound problems in the sum \cref{dizzy_sum}.
One of the earliest difficulties in \cref{dizzy_sum}
was the case when $a(n)$ is of the size comparable to $n$
like the Euler totient function case $a(n)=\phi(n)$.
The case $a(n)=\phi(n)$ was a conjecture proposed in \cite{BDHPS}
and finally solved by Zhai~\cite{Zhai,Zhai:corrigendum}
using the Vinogradov--Korobov--Walfisz type application
of the exponential sums.
Also, obtaining the error terms better than $x^{\frac{1}{2}}$
needs more involved application of the techniques of the exponential sums
and the situation seems to be far away from being exhausted.
See, e.g.\ 
\cite{Bordelles,%
LiMa:small,LiMa,LiuWuYang,%
MaSun,MaWu:PrimeWithMultiplicity,%
Stucky}.
%% Not related to the 1/2-barrier:
%MaWuZhao:BDHPS_generalisation
%Wu:EulerTotient: Pre-result before Zhai

%%%%%%%%%%%%%%%%%%%%%%%%%%%%%%%%%%%%%%%%
As a related topic, some researchers are interested in the sequence
\[
\mathscr{S}(x)
\coloneqq
\left\{
\biggl[\frac{x}{n}\biggr]
\midmid
n\in[1,x]\cap\mathbb{Z}
\right\}.
\]
Heyman~\cite{Heyman:Cardinality} proved that this sequence has the cardinality
\[
\#\mathscr{S}(x)=2\sqrt{x}+O(1).
\]
Heyman~\cite{Heyman:Prime} also proved an asymptotic formula
for the number of primes in $\mathscr{S}(x)$
and Ma and Wu~\cite{MaWu} improved this result.
In \cite{Heyman:Cardinality},
the distribution of the elements in $\mathscr{S}(x)$
divisible by a given modulus $d$ was conjectured
and this conjecture was solved by Wu and Yu~\cite{WuYu}
by giving the distribution of $\mathscr{S}(x)$ over arithmetic progressions.
It is interesting that
even thought the sequence $\mathscr{S}(x)$
has a sparse cardinality $x^{\frac{1}{2}}$,
we can still count the number of primes in it.

%%%%%%%%%%%%%%%%%%%%%%%%%%%%%%%%%%%%%%%%
In this paper, we give three remarks to the study of the above problems.

%%%%%%%%%%%%%%%%%%%%%%%%%%%%%%%%%%%%%%%%
\subsection{The distribution of \texorpdfstring{$\mathscr{S}(x)$}{S(x)} in arithmetic progressions}
\label{subsec:Sx_AP}
We first improve the result of Wu--Yu~\cite{WuYu}
on the distribution of $\mathscr{S}(x)$ over arithmetic progressions.
For $a,q\in\mathbb{Z}$ with $q\ge1$, let
\[
S^{\ast}(x,q,a)
\coloneqq
\sum_{\substack{
n\in\mathscr{S}(x)\\
n\equiv a\ \mod{q}
}}
1.
\]

%%%%%%%%%%%%%%%%%%%%%%%%%%%%%%%%%%%%%%%%
\begin{pretheorem}[{Wu and Yu~\cite[Theorem~1.1]{WuYu}}]
\label{prethm:WuYu}
For $x\ge3$ and $1\le q\le x^{\frac{1}{4}}(\log x)^{-\frac{3}{2}}$, we have
\[
S^{\ast}(x,q,a)
=
\frac{2\sqrt{x}}{q}+O\biggl(\biggl(\frac{x}{q}\biggr)^{\frac{1}{3}}\log x\biggr),
\]
where the implicit constant is absolute.
\end{pretheorem}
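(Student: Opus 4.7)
The plan is to reduce, by Dirichlet's hyperbola method, to counting $n \le \sqrt{x}$ with $[x/n]$ in the prescribed residue class, and then to estimate the resulting counting function by classical exponential sum machinery. First, every integer $m$ with $1 \le m \le \lfloor\sqrt{x}\rfloor - 1$ lies in $\mathscr{S}(x)$, since $x/m - x/(m+1) = x/(m(m+1)) > 1$ forces an integer into $(x/(m+1), x/m]$; conversely, the values $[x/n]$ for $n \le \lfloor\sqrt{x}\rfloor$ are mutually distinct and all $\ge \sqrt{x}$. Hence, up to $O(1)$ overlap at the transition,
\[
S^{\ast}(x,q,a) = \frac{\sqrt{x}}{q} + T(x,q,a) + O(1),
\qquad T(x,q,a) := \#\{n \le \sqrt{x} \semicolon [x/n] \equiv a \pmod{q}\}.
\]

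Next, I parametrize $T$ by the value $v = [x/n]$: each $v \ge \sqrt{x}$ with $v \equiv a \pmod{q}$ contributes $[x/v] - [x/(v+1)] \in \{0,1\}$ preimages, since for $v > \sqrt{x}$ the interval $(x/(v+1), x/v]$ has length $<1$. Writing $[t] = t - 1/2 - \psi(t)$ with $\psi(t) = \{t\} - 1/2$ splits $T$ as
\[
T = \sum_{\substack{\sqrt{x}\le v\le x \\ v \equiv a \,(\mathrm{mod}\,q)}} \frac{x}{v(v+1)} + \sum_{\substack{\sqrt{x}\le v\le x \\ v \equiv a \,(\mathrm{mod}\,q)}} \bigl(\psi(x/(v+1)) - \psi(x/v)\bigr) + O(1).
\]
The first sum evaluates to $\sqrt{x}/q + O(1/q)$ (by character-detecting the progression and telescoping the $h = 0$ part), completing the main term $2\sqrt{x}/q$ of the theorem. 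To handle the second sum $E$, I detect the arithmetic progression by additive characters $\mathbf{1}[v \equiv a \pmod{q}] = q^{-1}\sum_{h=0}^{q-1} e(h(v-a)/q)$ and shift the index in the $\psi(x/(v+1))$ piece, which recasts $E$ as
\[
E = \frac{1}{q}\sum_{h=1}^{q-1} e(-ha/q)\bigl(e(-h/q) - 1\bigr)\Psi_h + O(1),
\qquad \Psi_h := \sum_{\sqrt{x} \le v \le x} \psi(x/v)\, e(hv/q),
\]
so $|E| \ll q^{-2}\sum_{h=1}^{q-1} h\,|\Psi_h| + O(1)$ from $|1 - e(-h/q)| \ll h/q$.

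The remaining task, and the main obstacle, is to bound each $\Psi_h$. I expand $\psi$ by its truncated Fourier series (Vaaler's inequality, truncation parameter $K$), reducing $\Psi_h$ to two-variable exponential sums $\sum_v e(hv/q + kx/v)$ for $1 \le |k| \le K$ and $v$ in dyadic windows $[V, 2V] \subseteq [\sqrt{x}, x]$. The phase has stationary point $v^{\ast} = (kxq/h)^{1/2}$; I bound by the van der Corput A-process (exponent pair $(1/2,1/2)$) in dyadic ranges with $v \lesssim v^{\ast}$, and by Kuzmin--Landau (exploiting that the phase derivative is $\asymp h/q$) in ranges with $v \gg v^{\ast}$. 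After dyadic summation in $V$, summation over $1 \le k \le K$ and $1 \le h \le q - 1$, and balancing against the Vaaler truncation error of order $\sqrt{x}/K$, the resulting estimate is $|E| \ll (x/q)^{1/3}\log x$. The delicate point is keeping track of the $q$-dependence through the dyadic-and-exponent-pair analysis: a crude bound loses this dependence and yields only $x^{1/3}\log x$ flat in $q$, whereas the Kuzmin--Landau contribution $q/h$ in the large-$v$ regime is exactly what interpolates the error down to $(x/q)^{1/3}$. The exponent $1/3$ reflects use of the basic exponent pair $(1/2,1/2)$; the paper's subsequent result exactly substitutes a general exponent pair here.
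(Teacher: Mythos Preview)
Your initial split $S^\ast(x,q,a)=\sqrt{x}/q+T(x,q,a)+O(1)$ is correct and matches the paper's Proposition~\ref{prop:S_d_ast_decomp_pre}. The gap is in the next stage. Once you parametrize $T$ by $v=[x/n]$ and detect the progression with additive characters, your sum
\[
\Psi_h=\sum_{\sqrt{x}\le v\le x}\psi\!\Bigl(\frac{x}{v}\Bigr)e\!\Bigl(\frac{hv}{q}\Bigr)
\]
is a genuine sum over roughly $x$ integers, not $\sqrt{x}$. The Vaaler/Erd\H{o}s--Tur\'an truncation error is therefore of order $x/K$ (from the $k=0$ Fej\'er term, where the oscillation $e(hv/q)$ cannot be exploited because one must take absolute values in Vaaler's inequality), not $\sqrt{x}/K$ as you claim. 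Feeding this through $|E|\ll q^{-2}\sum_{h}h\,|\Psi_h|$ gives a Vaaler contribution of order $x/K$ to $E$ itself; balancing against your van der Corput terms then yields only $E\ll x^{1/2}$, which is trivial. The mismatch is that $T$ counts $\le\sqrt{x}$ objects, but your chosen representation of $T$ as a weighted sum over $v$ has length $\asymp x$.

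The paper's route is different in exactly the place where yours breaks down. After the same first reduction, it does \emph{not} pass to characters; instead it writes $v=qn+a$ and then applies a \emph{second} hyperbola-type split at $n=\sqrt{x/q}$ (Proposition~\ref{prop:S_d_ast_decomp}), producing two $\psi$-sums $R_1^\ast$ and $R_2$, each of length $\le\sqrt{x/q}$. Only with this shortened length does the exponent-pair estimate of Lemma~\ref{lem:exp_sum} close: the Kuzmin--Landau term $(x/q)^{-1}N^2$ becomes $O(1)$ rather than $O(x/q)$. Your final sentence misidentifies the new ingredient in Theorem~\ref{thm:WuYu_improved}: it is not the substitution of a general exponent pair for $(\tfrac12,\tfrac12)$ alone, but this second hyperbola reduction that makes the exponent-pair bound effective (and, incidentally, removes Wu--Yu's restriction $q\le x^{1/4}(\log x)^{-3/2}$). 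If you want a $\sqrt{x}$-length sum without the second hyperbola, the natural fix is to stay in the $n$-variable and detect $[x/n]\equiv a\ \mod{q}$ directly via $\{(x/n-a)/q\}<1/q$, which gives a single $\psi$-sum over $n\le\sqrt{x}$; that recovers the Wu--Yu bound in their restricted range but not the paper's improvement.
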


%%%%%%%%%%%%%%%%%%%%%%%%%%%%%%%%%%%%%%%%
At the last displayed formula in p.~423 of \cite{WuYu},
Wu and Yu have a error term $D^{2}(x/q)^{-1}$,
which is unavoidable since it comes from the Kuzmin--Landau inequality for the low-frequency phase function.
Also, they have the range $D\asymp (x/q)^{\frac{2}{3}}$ at worst to bound $S_{2,2}^{(\delta)}$.
These observation gives the error term in \cref{prethm:WuYu} seems not to be improvable.
However, we show an appropriate application of Dirichlet's hyperbola method
enables some further improvement.
We state the result also with the counting function
\[
S(x,q,a)
\coloneqq
\sum_{\substack{
n\le x\\\relax
[x/n]\equiv a\ \mod{q}
}}1
\]
counted with the multiplicity.

%%%%%%%%%%%%%%%%%%%%%%%%%%%%%%%%%%%%%%%%
\begin{theorem}
\label{thm:WuYu_improved}
For an exponent pair $(k,\ell)$, $x\ge1$, $a\in\mathbb{Z}$ and $q\in\mathbb{N}$, we have
\begin{align}
S(x,q,a)
&=
x\sum_{\substack{n=1\\n\equiv a\ \mod{q}}}^{\infty}\frac{1}{n(n+1)}
+
O\biggl(
\biggl(\frac{x}{q}\biggr)^{\frac{k+\ell}{2k+2}}
+
1
\biggr),\\
S^{\ast}(x,q,a)
&=
\frac{2\sqrt{x}}{q}
+
O\biggl(
\biggl(\frac{x}{q}\biggr)^{\frac{k+\ell}{2k+2}}
+
1
\biggr),
\end{align}
where the implicit constants depend only on $(k,\ell)$.
\end{theorem}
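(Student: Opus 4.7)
The plan is to reorganize the defining sum of $S(x,q,a)$ according to the value $m=[x/n]$, apply Dirichlet's hyperbola method, and bound the resulting $\psi$-sum in an arithmetic progression via the exponent pair $(k,\ell)$; the formula for $S^*(x,q,a)$ then follows from the structure of $\mathscr{S}(x)$. Since $\#\{n\in[1,x]\cap\mathbb{Z}:[x/n]=m\}=\lfloor x/m\rfloor-\lfloor x/(m+1)\rfloor$, one has
\[
S(x,q,a)=\sum_{\substack{1\le m\le x\\ m\equiv a\ \mod{q}}}\bigl(\lfloor x/m\rfloor-\lfloor x/(m+1)\rfloor\bigr).
\]
For a parameter $D\in[1,x]$, splitting the original $n$-sum at $n=D$ and re-expressing the range $n>D$ as an $m$-sum over $m\le x/D$ yields the hyperbola identity $S(x,q,a)=A(D)+B(D)+O(1)$, where $A(D)\coloneqq\#\{n\le D:[x/n]\equiv a\ \mod{q}\}$ and $B(D)$ denotes the $m$-sum over $m\le x/D$, $m\equiv a\ \mod{q}$ of the same summand.

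Writing $\lfloor y\rfloor=y-\tfrac{1}{2}-\psi(y)$ and shifting the index in the resulting difference of $\psi$-sums, one extracts from $B(D)$ the expected main term $x\sum_{m\ge 1,\ m\equiv a\ \mod{q}}\tfrac{1}{m(m+1)}$, a tail contribution of size $O(D/q)$, and a fractional-part remainder of the form $T(x/D,q,a+1)-T(x/D,q,a)+O(1)$, where
\[
T(M,q,b)\coloneqq\sum_{\substack{m\le M\\ m\equiv b\ \mod{q}}}\psi(x/m).
\]
To estimate $T(M,q,b)$ we approximate $\psi$ by a Vaaler trigonometric polynomial of length $H$, dyadically decompose the range of $m$, and after the substitution $m=b+qj$ apply the exponent pair $(k,\ell)$ to each inner sum $\sum_{j}e(hx/(b+qj))$ with the rescaled frequency parameters. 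Summing in $h\le H$ with weights $O(1/h)$, adding the Fourier truncation error of order $M/(qH)$, and optimizing in $H$ gives the required bound on $T(M,q,b)$. Combined with the trivial bound $A(D)\le D$, a final optimization in $D$ produces the first formula with error $O((x/q)^{(k+\ell)/(2k+2)}+1)$.

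For $S^*(x,q,a)$, we exploit the decomposition $\mathscr{S}(x)=\{1,2,\dots,\lfloor\sqrt x\rfloor\}\cup\{[x/n]:1\le n\le\lfloor\sqrt x\rfloor\}$, observing that the second set consists of distinct integers at least $\lceil\sqrt x\rceil$ (since for $m\ge\sqrt x$ the interval $(x/(m+1),x/m]$ has length at most $1$). This gives $S^*(x,q,a)=\lfloor\sqrt x/q\rfloor+A(\lfloor\sqrt x\rfloor)+O(1)$, and $A(\lfloor\sqrt x\rfloor)$ is recovered from the hyperbola identity at $D=\sqrt x$ as $A(\lfloor\sqrt x\rfloor)=S(x,q,a)-B(\sqrt x)+O(1)$; the bound on $T(M,q,b)$ at $M=\sqrt x$ controls the error in $B(\sqrt x)$, and the second copy of $\sqrt x/q$ in the leading coefficient $2\sqrt x/q$ emerges from the tail $x\sum_{m>\sqrt x,\,m\equiv a\ \mod{q}}1/(m(m+1))=\sqrt x/q+O(1/q)$ completing the main-term series. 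The principal obstacle is the estimate for $T(M,q,b)$ itself: one must juggle the dyadic decomposition, the correct $q$-scaling when applying the exponent pair to the progression-restricted sum, and the choice of $H$, in such a way that the final exponent comes out exactly as $(k+\ell)/(2k+2)$.
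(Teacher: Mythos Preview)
Your proposal has a genuine gap: the trivial bound $A(D)\le D$ is too weak to yield the exponent $(k+\ell)/(2k+2)$. After the substitution $m=b+qj$, the sum $T(x/D,q,b)$ runs over $j\lesssim x/(Dq)$, and the exponent-pair estimate at dyadic scale $N$ contributes both $(x/q)^{k/(k+1)}N^{(\ell-k)/(k+1)}$ and the low-frequency term $(x/q)^{-1}N^{2}$. The latter forces $x/(Dq)\le\sqrt{x/q}$, i.e.\ $D\ge\sqrt{x/q}$; but then your bound $A(D)\le D$ is already at least $\sqrt{x/q}$, strictly larger than $(x/q)^{(k+\ell)/(2k+2)}$ whenever $\ell<1$. (If one ignores the low-frequency term and optimizes $D$ against the first term alone, one gets the exponent $\ell/(\ell+1)$, which equals $(k+\ell)/(2k+2)$ only for $k=\ell=\tfrac12$ and is strictly worse for, e.g., $(k,\ell)=(\tfrac{11}{30},\tfrac{16}{30})$.)

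What is missing is that $A(D)$ must itself be converted into a $\psi$-sum rather than discarded. In the paper the split is made at the balanced point $D=\sqrt{x/q}$, and for $n\le\sqrt{x/q}$ one does not bound $\mathbbm{1}_{[x/n]\equiv a\ \mod{q}}$ trivially; instead one writes the number of $\nu\equiv a\ \mod{q}$ in $(x/n-1,x/n]$ as
\[
\biggl[\frac{x}{qn}-\frac{a}{q}\biggr]-\biggl[\frac{x}{qn}-\frac{a+1}{q}\biggr]
=\frac{1}{q}-\psi\biggl(\frac{x}{qn}-\frac{a}{q}\biggr)+\psi\biggl(\frac{x}{qn}-\frac{a+1}{q}\biggr),
\]
producing a second $\psi$-sum $R_{2}$ over the same range $n\le\sqrt{x/q}$ as the one coming from $B(D)$. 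Both are then bounded by the same exponent-pair lemma, and it is this common range $\sqrt{x/q}$ that produces the exponent $(k+\ell)/(2k+2)$. In short, your ``hyperbola identity'' is only half of the hyperbola method: the other half---unfolding the congruence condition on $[x/n]$ for small $n$ through the rescaled sawtooth function---is precisely the step that improves Wu--Yu, and your argument for $S^{\ast}$ inherits the same deficiency via its dependence on $A(\lfloor\sqrt{x}\rfloor)$.
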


%%%%%%%%%%%%%%%%%%%%%%%%%%%%%%%%%%%%%%%%
Therefore, suitable improvements on the exponent pair
immediately give the improvements over \cref{prethm:WuYu}.
For example, the usual van der Corput A,B processes give a sequence of exponent pairs
$(k,\ell)$ such that the exponents in \cref{thm:WuYu_improved} approaches to
\[
\frac{k+\ell}{2k+2}
\to
R-\tfrac{1}{2}
=
0.3290213568\cdots
\]
as shown in Section~5.4 of \cite{GrahamKolesnik}.

%%%%%%%%%%%%%%%%%%%%%%%%%%%%%%%%%%%%%%%%
We shall prove \cref{thm:WuYu_improved} in \cref{sec:distribution_AP}.

%%%%%%%%%%%%%%%%%%%%%%%%%%%%%%%%%%%%%%%%
\subsection{Primitive lattice points with \texorpdfstring{$[x/n]$}{[x/n]} coordinates}
\label{subsec:primitive_lattice_point}
It is a well-known classical result
that the number of primitive lattice points in $\mathbb{Z}^{2}$ satisfies
\[
\sum_{\substack{
1\le m,n\le x\\
(m,n)=1
}}
1
=
\frac{1}{\zeta(2)}x^{2}
+
O(x\log x).
\]
Srichan~\cite[Formula (8), Theorem~3, p.~49]{Srichan}
considered its $\mathscr{S}(x)$-analog and claimed that
\begin{equation}
\label{Srichan:failed_result}
\sum_{\substack{
m,n\in\mathscr{S}(x)\\
(m,n)=1
}}
1
=
\frac{4}{\zeta(2)}x
+
O(x^{\frac{5}{6}}\log x).
\end{equation}
However, the authors could not follow Srichan's argument to bound the sum
\begin{equation}
\label{Srichan:failed_sum}
\sum_{x^{\frac{1}{4}}(\log x)^{-\frac{3}{2}}<d\le x}
\mu(d)
\sum_{\substack{a\in\mathscr{S}(x)\\d\mid a}}
\sum_{\substack{b\in\mathscr{S}(x)\\d\mid b}}
1
\end{equation}
on l.~$-3$ in p.~56. He seems to use the bound
\begin{equation}
\label{Srichan:wrong_bound}
\sum_{\substack{a\in\mathscr{S}(x)\\d\mid a}}1
\ll
\frac{\sqrt{x}}{d},
\end{equation}
but this bound does not hold for large $d$ as we can see by the following example:
%%%%%
\begin{example}
\label{example:}
Let
\[
N\in\mathbb{N},\quad
x=N^{3}
\and
d=N^{2}.
\]
Since $N\in\mathbb{N}$ and $1\le N\le x$, we then have
\[
\mathscr{S}(x)
\owns
\biggl[\frac{x}{N}\biggr]
=
[N^{2}]
=
N^{2}
=
d,
\]
which is of course divisible by $d$. This gives
\begin{equation}
\label{ex:below_1}
\sum_{\substack{a\in\mathscr{S}(x)\\d\mid a}}1
\ge1.
\end{equation}
On the other hand, if \cref{Srichan:wrong_bound} holds, since $1\le d\le x$ now, we have
\[
1
\le
\sum_{\substack{a\in\mathscr{S}(x)\\d\mid a}}1
\ll
\frac{x^{\frac{1}{2}}}{d}
=
\frac{N^{\frac{3}{2}}}{N^{2}}
=
N^{-\frac{1}{2}},
\]
which is a contradiction for large $N$.
This shows the bound \cref{Srichan:wrong_bound} does not hold for large $d$.
\end{example}
%%%%%
Although we cannot apply \cref{prethm:WuYu} literally,
the result of Wu and Yu holds even for $d>x^{\frac{1}{4}}(\log x)^{-\frac{3}{2}}$
as we justify in the proof of \cref{thm:WuYu_improved}.
Thus, if we ignore the $\log x$ factor (which can be dealt with after replaced by $\log\frac{x}{d}$),
\cref{prethm:WuYu} gives the bound
\[
\sum_{\substack{a\in\mathscr{S}(x)\\d\mid a}}1
\ll
\frac{\sqrt{x}}{d}
+
\biggl(\frac{x}{d}\biggr)^{\frac{1}{3}}.
\]
Thus, we may bound \cref{Srichan:failed_sum} as
\[
\ll
x\sum_{x^{\frac{1}{4}}(\log x)^{-\frac{3}{2}}<d\le x}\frac{1}{d^{2}}
+
x^{\frac{2}{3}}
\sum_{x^{\frac{1}{4}}(\log x)^{-\frac{3}{2}}<d\le x}\frac{1}{d^{\frac{2}{3}}}
\asymp
x,
\]
which is slightly short of giving any asymptotic formula for the left-hand side of \cref{Srichan:failed_result}.
Therefore, it seems that Srichan's original proof does not work as it is.
The problem here is somehow related to the sparseness of $\mathscr{S}(x)$.

%%%%%%%%%%%%%%%%%%%%%%%%%%%%%%%%%%%%%%%%
We succeed in recovering and improving Srichan's result
by introducing an averaging trick and our improved version of the result of Wu and Yu~\cref{thm:WuYu_improved}:
%%%%%%%%%%%%%%%%%%%%%%%%%%%%%%%%%%%%%%%%
\begin{theorem}
\label{thm:primitive_Sx}
For an exponent pairt $(k,\ell)$ and $x\ge2$, we have
\[
\sum_{\substack{m,n\in\mathscr{S}(x)\\(m,n)=1}}1
=
\frac{4}{\zeta(2)}x
+
O(
x^{\frac{1}{2}+\frac{k+\ell}{2k+2}}
),
\]
where the implicit constant depends on $(k,\ell)$.
\end{theorem}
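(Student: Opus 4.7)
The plan is to apply Möbius inversion to the coprimality condition and estimate the resulting divisor sums via \cref{thm:WuYu_improved}. Writing $T(x, d) \coloneqq \#\{a \in \mathscr{S}(x) : d \mid a\} = S^{\ast}(x, d, 0)$, we have
\[
\sum_{\substack{m, n \in \mathscr{S}(x) \\ (m, n) = 1}} 1 = \sum_{d \ge 1} \mu(d) T(x, d)^{2}.
\]
\cref{thm:WuYu_improved} applied with $a = 0$ gives $T(x, d) = 2\sqrt{x}/d + E(d)$ with $|E(d)| \ll (x/d)^{\theta} + 1$, where $\theta \coloneqq (k + \ell)/(2k + 2)$. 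I would then split the divisor sum at the threshold $d = \sqrt{x}$.

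For $d \le \sqrt{x}$, I expand $T(x, d)^{2} = 4x/d^{2} + 4\sqrt{x}E(d)/d + E(d)^{2}$ and sum each piece. The dominant contribution $4x \sum_{d \le \sqrt{x}} \mu(d)/d^{2}$ yields the main term $4x/\zeta(2) + O(\sqrt{x})$; the cross term is bounded by $O(\sqrt{x}(x^{\theta} + \log x))$; and the square error contributes $\sum_{d \le \sqrt{x}} ((x/d)^{2\theta} + 1) \ll x^{2\theta} \cdot x^{(1 - 2\theta)/2} + \sqrt{x} \ll x^{1/2 + \theta}$ (when $2\theta < 1$; the boundary case gives the trivial result). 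Thus the small-$d$ range contributes $4x/\zeta(2) + O(x^{1/2 + \theta})$.

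The genuine obstacle is the tail $\sum_{d > \sqrt{x}} \mu(d) T(x, d)^{2}$, where a pointwise expansion produces an unacceptable $O(x)$ term, which is precisely what derailed Srichan's argument. The averaging trick I would use is the elementary $L^{\infty} \cdot L^{1}$ inequality
\[
\sum_{d > \sqrt{x}} T(x, d)^{2} \le \Bigl(\max_{d > \sqrt{x}} T(x, d)\Bigr) \sum_{d > \sqrt{x}} T(x, d),
\]
combined with the Dirichlet hyperbola reflection $d \mapsto a/d$: since every $a \in \mathscr{S}(x)$ satisfies $a \le x$, each divisor $d > \sqrt{x}$ of such an $a$ pairs with a divisor $a/d < \sqrt{x}$ of $a$, so
\[
\sum_{d > \sqrt{x}} T(x, d) \le \sum_{d' \le \sqrt{x}} T(x, d') \ll x^{1/2 + \theta/2}
\]
after a second application of \cref{thm:WuYu_improved}. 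On the other hand, the same theorem directly supplies $\max_{d > \sqrt{x}} T(x, d) \ll 1 + x^{\theta/2} \ll x^{\theta/2}$. Multiplying gives $O(x^{1/2 + \theta})$ for the tail, matching the target. The conceptual heart of the argument is the decoupling of the sup norm and $L^{1}$ norm of $T(x, d)$: the pointwise square bound is genuinely wasteful in the large-$d$ regime, but pairing a sharp sup estimate with an $L^{1}$ estimate pulled back to the small-$d$ regime via the hyperbola symmetry—where \cref{thm:WuYu_improved} is essentially optimal—recovers the correct order.
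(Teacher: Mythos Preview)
Your argument is correct and follows the same overall strategy as the paper: M\"obius inversion, a split at $d=\sqrt{x}$, direct use of \cref{thm:WuYu_improved} for small $d$, and the crucial ``averaging trick'' of replacing the pointwise square $T(x,d)^{2}$ by $(\sup T)\cdot(\sum T)$ for large $d$.

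The one substantive difference is in how you control the $L^{1}$ piece $\sum_{d>\sqrt{x}}T(x,d)$. The paper swaps the order of summation and bounds this by $\sum_{n\in\mathscr{S}(x)}\tau(n)\ll_{\epsilon} x^{1/2+\epsilon}$ via the divisor bound, which forces an $\epsilon$-loss that is later absorbed by choosing $\epsilon=\frac{k+\ell}{4k+4}$. Your hyperbola reflection $d\mapsto a/d$ instead transports the sum back to the range $d'\le\sqrt{x}$, where a second application of \cref{thm:WuYu_improved} gives $\sum_{d'\le\sqrt{x}}T(x,d')\ll x^{(1+\theta)/2}$ directly. This is slightly cleaner: it avoids the divisor bound and the $\epsilon$-bookkeeping entirely, and lands on the target error $x^{1/2+\theta}$ in one step. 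Both arguments, however, are implementations of the same key idea you correctly identified---decoupling the sup and the $L^{1}$ norm of $T(x,d)$ rather than squaring the pointwise error.
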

Note that $(k,\ell)=(\frac{1}{2},\frac{1}{2})$ recovers the original claim of Srichan.

%%%%%%%%%%%%%%%%%%%%%%%%%%%%%%%%%%%%%%%%
We also prove a result with counting the multiplicity:
%%%%%%%%%%%%%%%%%%%%%%%%%%%%%%%%%%%%%%%%
\begin{theorem}
\label{thm:primitive_xn}
For an exponent pairt $(k,\ell)$ and $x\ge2$, we have
\[
\sum_{\substack{
1\le m,n\le x\\
([x/m],[x/n])=1
}}1
=
\mathfrak{S}x^{2}
+
O(
x^{1+\frac{k+\ell}{2k+2}}
),
\]
where
\[
\mathfrak{S}
\coloneqq
\sum_{\substack{m,n=1\\(m,n)=1}}^{\infty}
\frac{1}{mn(m+1)(n+1)}
\]
and the implicit constant depends on $(k,\ell)$.
\end{theorem}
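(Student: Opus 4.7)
The plan is to reduce \cref{thm:primitive_xn} to the $a=0$ case of \cref{thm:WuYu_improved} via M\"obius inversion. Since $1 \leq m \leq x$ forces $[x/m] \geq 1$, the coprimality condition may be detected by $\sum_{d \mid ([x/m],[x/n])} \mu(d)$, which yields
$$\sum_{\substack{1 \leq m, n \leq x \\ ([x/m],[x/n])=1}} 1 = \sum_{d=1}^{\infty} \mu(d)\, S(x,d,0)^{2}.$$
Furthermore, $d \mid [x/m]$ together with $[x/m] \geq 1$ implies $m \leq x/d$, so one has the trivial bound $S(x,d,0) \leq x/d$ and the outer sum truncates at $d \leq x$.

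Next, I would apply \cref{thm:WuYu_improved} to write
$$S(x,d,0) = x\, g(d) + r(x,d),\qquad g(d) \coloneqq \sum_{\substack{n \geq 1 \\ d \mid n}} \frac{1}{n(n+1)},\qquad |r(x,d)| \ll (x/d)^{\alpha} + 1,$$
where $\alpha = \frac{k+\ell}{2k+2}$. Noting that $g(d) \asymp d^{-2}$, a standard M\"obius-inversion calculation gives
$$\sum_{d=1}^{\infty} \mu(d)\, g(d)^{2} = \sum_{m,n \geq 1} \frac{1}{mn(m+1)(n+1)} \sum_{d \mid (m,n)} \mu(d) = \mathfrak{S},$$
and completing the sum $\sum_{d \leq x} \mu(d)\, x^{2} g(d)^{2}$ to infinity introduces only an $O(1/x)$ tail. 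For the residual, I would write $S(x,d,0)^{2} - x^{2} g(d)^{2} = r(x,d)\,[S(x,d,0) + x g(d)]$ and combine the bounds $S(x,d,0) + x g(d) \ll x/d$ and $|r(x,d)| \ll (x/d)^{\alpha} + 1$ to obtain
$$\sum_{d \leq x} \left( \left(\frac{x}{d}\right)^{1+\alpha} + \frac{x}{d} \right) \ll x^{1+\alpha} + x \log x \ll x^{1+\alpha},$$
which matches the claimed error term.

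I do not anticipate a serious obstacle: no averaging trick analogous to the one used for \cref{thm:primitive_Sx} is required here, since the present sum is counted with multiplicity and the M\"obius inversion factors cleanly as a square of $S(x,d,0)$. The only point deserving attention is the treatment of the ``$+1$'' in the error term of \cref{thm:WuYu_improved}, which generates a parasitic $x \log x$ term after summing against $x/d$; this is however comfortably absorbed by $x^{1+\alpha}$ for any valid exponent pair.
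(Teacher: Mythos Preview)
Your proposal is correct and follows essentially the same route as the paper: M\"obius inversion to reduce to $\sum_{d\le x}\mu(d)S(x,d,0)^{2}$, then \cref{thm:WuYu_improved} for each factor, then identification of the main term with $\mathfrak{S}$ and summation of the error. The one minor difference is in the error bookkeeping: the paper expands $(xg(d)+r)^{2}$ and bounds the cross term and $r^{2}$ separately, which requires $2\alpha<1$ and hence a preliminary reduction excluding $(k,\ell)=(0,1)$; your factorization $S^{2}-x^{2}g^{2}=r\,(S+xg)$ together with the trivial bound $S(x,d,0)\le x/d$ needs only $\alpha>0$, which holds for every exponent pair, so your version is marginally cleaner.
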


%%%%%%%%%%%%%%%%%%%%%%%%%%%%%%%%%%%%%%%%
We shall prove \cref{thm:primitive_Sx} and \cref{thm:primitive_xn}
in \cref{sec:primitive_lattice_points}.

%%%%%%%%%%%%%%%%%%%%%%%%%%%%%%%%%%%%%%%%
\subsection{``Multiplicative'' Titchmarsh divisor problem}
\label{subsec:multiplicative_Titchmarsh_divisor_problem}
A variant of the well-known Titchmarsh divisor problem states
\begin{equation}
\label{Titchmarsh_divisor_problem}
\sum_{p<N}\tau(N-p)
=
\prod_{p\mid N}\biggl(1+\frac{1}{p^{2}-p+1}\biggr)
\frac{\zeta(2)\zeta(3)}{\zeta(6)}N
+
O\biggl(\frac{N\log\log N}{\log N}\biggr)
\end{equation}
for positive integers $N\ge4$, where $\tau(n)$ is the usual divisor function,
i.e.\ the number of divisors of $n$.
For the original formulation see~\cite{Titchmarsh:Divisor}
and for the unconditional proof,
see \cite[Chapter~8]{Linnik:dispersion}
or \cite{Halberstam:TitchmarshDivisor}.
The sum in \cref{Titchmarsh_divisor_problem} can be seen as
``the additive convolution of $\tau$ and the indicator function of primes''.
If we consider its multiplicative analog
without concerning the divisibility, we may arrive at the sum
\[
\sum_{p\le x}\tau\biggl(\biggl[\frac{x}{p}\biggr]\biggr)
\]
or, more generally, the sum
\[
\sum_{p\le x}a\biggl(\biggl[\frac{x}{p}\biggr]\biggr)
\]
for a given arithmetic function $a(n)$.
Indeed, because of its unconventional nature,
the difference of the indicator function of primes
and the von Mangoldt function is not negligible anymore
for some arithmetic function $a(n)$.
Hence, we consider the following two sums separately:
\begin{equation}
\label{multiplicative_titchmarsh_divisor_problem}
\sum_{n\le x}\Lambda(n)a\biggl(\biggl[\frac{x}{n}\biggr]\biggr)
\and
\sum_{p\le x}a\biggl(\biggl[\frac{x}{p}\biggr]\biggr).
\end{equation}
We shall call the problems related to such sums
``the multiplicative Titchmarsh divisor problems''.

%%%%%%%%%%%%%%%%%%%%%%%%%%%%%%%%%%%%%%%%
We first consider the easy case where $a(n)$ is small:
%%%%%%%%%%%%%%%%%%%%%%%%%%%%%%%%%%%%%%%%
\begin{theorem}
\label{thm:MTD_easy_with_vonMangoldt}
Let $a\colon\mathbb{N}\to\mathbb{C}$ be an arithmetic function satisfying
%%%%%
\begin{enumerate}[label=\textup{(\alph*)}]
%%%%%
\item\label{thm:MTD_easy_with_vonMangoldt:a_mean}
For $x\ge 2$, we have
\[
\sum_{n\le x}\frac{|a(n)|}{n}
\ll
(\log x)^{A}
\]
with some constant $A\ge0$.
%%%%%
\item\label{thm:MTD_easy_with_vonMangoldt:sup}
For $n\in\mathbb{N}$, we have
\[
|a(n)|\le Bn^{\theta}
\]
with some constants $B\ge0$ and $\theta\in[0,\frac{1}{2})$.
\end{enumerate}
Then, for $x\ge1$, we have
\[
\sum_{n\le x}\Lambda(n)a\biggl(\biggl[\frac{x}{n}\biggr]\biggr)
=
x\sum_{n=1}^{\infty}\frac{a(n)}{n(n+1)}
+O(x\exp(-c\sqrt{\log x})),
\]
where $c>0$ is some absolute constant and the implicit consant depends only on $A,B,\theta$.
\end{theorem}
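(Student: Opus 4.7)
The plan is to group the summation by the value $m=[x/n]$. Since $[x/n]=m$ exactly when $n\in(x/(m+1),x/m]$, and $m$ ranges from $1$ to $[x]$, the sum rewrites as
\[
\sum_{n\le x}\Lambda(n)a\biggl(\biggl[\frac{x}{n}\biggr]\biggr)
=
\sum_{1\le m\le x} a(m)\bigl(\psi(x/m)-\psi(x/(m+1))\bigr),
\]
where $\psi$ denotes the Chebyshev function. I would then split this sum at the cutoff $M=\sqrt{x}$ and treat the two ranges separately. The key inputs will be the prime number theorem with the classical (de la Vall\'{e}e Poussin) error term $\psi(y)=y+O\bigl(y\exp(-c\sqrt{\log y})\bigr)$, the Chebyshev bound $\psi(y)\ll y$, and the two hypotheses \cref{thm:MTD_easy_with_vonMangoldt:a_mean} and \cref{thm:MTD_easy_with_vonMangoldt:sup} imposed on $a$.

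For the small range $1\le m\le M$, the prime number theorem yields
\[
\psi(x/m)-\psi(x/(m+1))=\frac{x}{m(m+1)}+O\bigl((x/m)\exp(-c\sqrt{\log(x/m)})\bigr),
\]
and since $x/m\ge\sqrt{x}$ the exponential factor is $\ll\exp(-c'\sqrt{\log x})$. The main term contributes $x\sum_{m\le M}a(m)/(m(m+1))$, which I would extend to the infinite series $x\sum_{m=1}^{\infty}a(m)/(m(m+1))$ at the cost of a tail of size $\ll\sqrt{x}(\log x)^{A}$, obtained by partial summation from \cref{thm:MTD_easy_with_vonMangoldt:a_mean}. The error term in the displayed formula contributes at most
\[
\ll x\exp(-c'\sqrt{\log x})\sum_{m\le M}\frac{|a(m)|}{m}\ll x(\log x)^{A}\exp(-c'\sqrt{\log x}),
\]
which is absorbed into $O(x\exp(-c''\sqrt{\log x}))$ by lowering the constant slightly.

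For the complementary range $m>M$, the union of the intervals $(x/(m+1),x/m]$ is exactly $\{n\in\mathbb{N}:n\le x/(M+1)\}$, and over this range $[x/n]>M$. Combining \cref{thm:MTD_easy_with_vonMangoldt:sup} with the Chebyshev bound and partial summation gives
\[
\biggl|\sum_{m>M}a(m)\bigl(\psi(x/m)-\psi(x/(m+1))\bigr)\biggr|
\le B\sum_{n\le x/(M+1)}\Lambda(n)\biggl[\frac{x}{n}\biggr]^{\theta}
\ll x^{\theta}(x/M)^{1-\theta}=x^{(1+\theta)/2}.
\]
The assumption $\theta<\tfrac{1}{2}$ ensures this is $\ll x^{3/4}$, easily swallowed by $x\exp(-c\sqrt{\log x})$. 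No single step of the plan looks delicate; the complementary role of the hypotheses is worth emphasising: \cref{thm:MTD_easy_with_vonMangoldt:a_mean} controls the convergence and tail of the main series, while the exponent $\theta<\tfrac{1}{2}$ in \cref{thm:MTD_easy_with_vonMangoldt:sup} is precisely what is needed to make the trivial estimate on the large-$[x/n]$ contribution beat the barrier $x\exp(-c\sqrt{\log x})$.
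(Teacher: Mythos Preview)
Your proposal is correct and follows essentially the same route as the paper: both rewrite the sum by grouping according to $m=[x/n]$, split at $\sqrt{x}$, apply the prime number theorem with de la Vall\'{e}e Poussin error on the small-$m$ range (using hypothesis \cref{thm:MTD_easy_with_vonMangoldt:a_mean} to control the resulting error sum and tail), and bound the large-$m$ range trivially via hypothesis \cref{thm:MTD_easy_with_vonMangoldt:sup}. The only cosmetic differences are that the paper does not name the Chebyshev function explicitly and obtains the slightly weaker bound $x^{\theta+\frac{1}{2}}$ rather than your $x^{(1+\theta)/2}$ for the large-$m$ piece; note also that in the paper $\psi$ is already reserved for the sawtooth function $\{x\}-\tfrac{1}{2}$, so if you incorporate your argument you should rename your $\psi$.
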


%%%%%%%%%%%%%%%%%%%%%%%%%%%%%%%%%%%%%%%%
\begin{theorem}
\label{thm:MTD_easy_unweighted}
Under the same condition as in \cref{thm:MTD_easy_with_vonMangoldt}, for $x\ge2$ and $K\in\mathbb{Z}_{\ge0}$, we have
\begin{align}
\sum_{p\le x}a\biggl(\biggl[\frac{x}{p}\biggr]\biggr)
=
\frac{x}{\log x}
\sum_{k=0}^{K}\biggl(\frac{1}{\log x}\biggr)^{k}
\int_{1}^{\infty}\frac{a([u])(\log u)^{k}}{u^2}du
+
O\biggl(\frac{x}{\log x}\biggl(\frac{1}{\log x}\biggr)^{K+1}\biggr),
\end{align}
where the implicit constant depends only on $A,B,\theta,K$.
\end{theorem}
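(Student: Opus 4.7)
The plan is to evaluate the sum directly by applying the prime number theorem in the form $\pi(y)=\mathrm{Li}(y)+E(y)$ with $|E(y)|\ll y\exp(-c\sqrt{\log y})$, rather than attempting to reduce to the $\Lambda$-version by partial summation (which is obstructed by the fact that the sum $\sum_{p\le y}(\log p)a([x/p])$ for $y<x$ is not of the form covered by the previous theorem). Grouping primes by the value of $m=[x/p]$ gives
\[
\sum_{p\le x}a\biggl(\biggl[\frac{x}{p}\biggr]\biggr)
=\sum_{m\ge 1}a(m)\bigl(\pi(x/m)-\pi(x/(m+1))\bigr),
\]
which splits into a main part $M_{1}$ from $\mathrm{Li}$ and an error part $E_{1}$ from $E$.

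For the main term, writing $\mathrm{Li}(y)-\mathrm{Li}(y')=\int_{y'}^{y}dt/\log t$ and substituting $u=x/t$ yields
\[
M_{1}=x\int_{1}^{x/2}\frac{a([u])}{u^{2}\log(x/u)}\,du.
\]
I would split this at $u=\sqrt{x}$. On $[1,\sqrt{x}]$, where $\log u/\log x\le 1/2$, expand
\[
\frac{1}{\log(x/u)}=\frac{1}{\log x}\sum_{k=0}^{K}\biggl(\frac{\log u}{\log x}\biggr)^{k}+O\biggl(\frac{(\log u)^{K+1}}{(\log x)^{K+2}}\biggr);
\]
the remainder contributes $O(x/(\log x)^{K+2})$ to $M_{1}$ because the integral $\int_{1}^{\infty}|a([u])|(\log u)^{K+1}/u^{2}\,du$ converges, by condition~(a) and partial summation. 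On $[\sqrt{x},x/2]$, condition~(b) gives the crude bound $O(x^{(\theta+1)/2})$, and extending each truncated moment $\int_{1}^{\sqrt{x}}a([u])(\log u)^{k}/u^{2}\,du$ to $\int_{1}^{\infty}$ using~(b) for the tail costs the same; both are $o(x/(\log x)^{N})$ for every $N$ since $\theta<1$.

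For the error $E_{1}=\sum_{m}a(m)[E(x/m)-E(x/(m+1))]$, I would split the sum at $m_{0}=x^{1-\delta}$ for a small fixed $\delta>0$. For $m\le m_{0}$, the uniform bound $|E(x/m)|\ll(x/m)\exp(-c\sqrt{\delta\log x})$ combined with condition~(a) gives a contribution of $O(x(\log x)^{A}\exp(-c\sqrt{\delta\log x}))=O(x\exp(-c'\sqrt{\log x}))$. For $m>m_{0}$, use the trivial bound
\[
|E(x/m)-E(x/(m+1))|\le\bigl(\pi(x/m)-\pi(x/(m+1))\bigr)+\bigl(\mathrm{Li}(x/m)-\mathrm{Li}(x/(m+1))\bigr),
\]
where the $\mathrm{Li}$-difference is $\ll x/m^{2}$; combined with condition~(b), and noting that summing the $\pi$-difference over $m>m_{0}$ collapses to $\sum_{p\le x^{\delta}}|a([x/p])|\ll x^{\theta}\sum_{p\le x^{\delta}}p^{-\theta}\ll x^{\theta+\delta(1-\theta)}/\log x$ by the prime number theorem, this yields $O(x^{\theta+\delta(1-\theta)})$. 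For small $\delta$ this is $o(x\exp(-c'\sqrt{\log x}))$, so $E_{1}=O(x/(\log x)^{K+2})$.

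The main obstacle is not conceptual but organizational: keeping track of the several error terms arising in the main-term computation (the geometric series truncation, the $[\sqrt{x},x/2]$-tail of the integral, and extension of each truncated moment to infinity) and verifying that each of them is uniformly absorbed into $O(x/(\log x)^{K+2})$. The treatment of $E_{1}$ is a standard splitting argument, but it genuinely requires both conditions (a) and (b) applied to different parts of the range.
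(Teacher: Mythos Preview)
Your approach is correct and essentially the same as the paper's: both group primes by $m=[x/p]$, apply the prime number theorem, change variables to obtain the integral $x\int a([u])\,u^{-2}(\log(x/u))^{-1}\,du$, and expand $1/\log(x/u)$ as a geometric series on $u\le\sqrt{x}$. The only cosmetic difference is that the paper splits the sum at $n=\sqrt{x}$ \emph{before} invoking the prime number theorem (so the PNT error is handled in one stroke via condition~(a), and the tail $n>\sqrt{x}$ is bounded trivially by~(b)), whereas you write $\pi=\mathrm{Li}+E$ globally and then treat the $E$-contribution with a separate splitting at $m_{0}=x^{1-\delta}$; this comes to the same thing, and in fact taking $\delta=\tfrac12$ would align your argument with the paper's exactly.
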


%%%%%%%%%%%%%%%%%%%%%%%%%%%%%%%%%%%%%%%%
We remark that the condition \cref{thm:MTD_easy_with_vonMangoldt:a_mean}
may be weakened by improving or using the error estimate of the prime number theorem more carefully.

%%%%%%%%%%%%%%%%%%%%%%%%%%%%%%%%%%%%%%%%
We next consider the more difficult case $a(n)\asymp n$.
With $a(n)=\phi(n)$ and the von Mangoldt function,
this is now straightforward after the success of Zhai~\cite{Zhai,Zhai:corrigendum}:
%%%%%%%%%%%%%%%%%%%%%%%%%%%%%%%%%%%%%%%%
\begin{theorem}
\label{thm:phi_xn_Titchmarsh_vonMangoldt}
For $x\ge 27$, we have
\[
\sum_{n\le x}\Lambda(n)\phi\biggl(\biggl[\frac{x}{n}\biggr]\biggr)
=
\frac{1}{\zeta(2)}x\log x
+
O(x(\log x)^{\frac{2}{3}}(\log\log x)^{\frac{1}{3}}).
\]
\end{theorem}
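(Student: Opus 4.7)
The natural strategy is to reduce this to Zhai's evaluation \cite{Zhai,Zhai:corrigendum} of the unweighted sum via the convolution identity $\Lambda=-\mu\ast\log$, that is, $\Lambda(n)=-\sum_{d\mid n}\mu(d)\log d$. Writing $n=de$ and interchanging the order of summation gives
\[
\sum_{n\le x}\Lambda(n)\phi\bigl([x/n]\bigr)
=
-\sum_{d\ge 2}\mu(d)\log d\cdot T(x/d),
\]
where $T(y):=\sum_{e\le y}\phi([y/e])$ is precisely Zhai's sum. Zhai's theorem supplies the asymptotic
\[
T(y)=\frac{y\log y}{\zeta(2)}+cy+O\bigl(y(\log y)^{2/3}(\log\log y)^{1/3}\bigr)
\]
for some constant $c$, so the whole problem reduces to summing this asymptotic against $\mu(d)\log d$.

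For the main term, substitute and expand $\log(x/d)=\log x-\log d$. The contribution is then a linear combination of $\sum_d\mu(d)\log d/d$ and $\sum_d\mu(d)(\log d)^2/d$, to each of which we attach an extra factor $x$. The classical Mertens-type identities $\sum_d\mu(d)\log d/d=-1$ and $\sum_d\mu(d)(\log d)^2/d=-2\gamma$ (which are equivalent to the prime number theorem and hold with Walfisz-type rates of convergence) then collapse the combination to the expected leading term $\tfrac{1}{\zeta(2)}x\log x+O(x)$.

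The delicate point is the remainder $-\sum_d\mu(d)\log d\cdot E_T(x/d)$, where $E_T(y):=T(y)-\tfrac{y\log y}{\zeta(2)}-cy$ satisfies only the pointwise bound $|E_T(y)|\ll y(\log y)^{2/3}(\log\log y)^{1/3}$. Simply plugging this in loses several powers of $\log x$ and is insufficient. To recover the claimed error we must extract cancellation from the M\"obius coefficients. The plan is to apply Abel summation against
\[
M^{\ast}(D):=\sum_{d\le D}\mu(d)\log d\ll D\exp\bigl(-c(\log D)^{3/5}(\log\log D)^{-1/5}\bigr),
\]
which is Walfisz's bound (with a harmless factor of $\log D$). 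This shifts the task to controlling the discrete differences $E_T(x/d)-E_T(x/(d+1))$, and the main obstacle is precisely to prove a sufficiently strong smoothness/monotonicity estimate for $E_T$. We expect to obtain it by inspecting the exponential-sum argument inside Zhai's proof rather than treating Zhai's theorem as a black box; with this in hand, the resulting error matches the Walfisz-type cost and produces exactly $O\bigl(x(\log x)^{2/3}(\log\log x)^{1/3}\bigr)$.
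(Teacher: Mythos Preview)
Your reduction via $\Lambda=-\mu\ast\log$ and the main-term calculation are fine, but the plan for the error term has a genuine gap. Abel summation against $M^{\ast}(D)$ trades the oscillation in $\mu(d)\log d$ for smallness of the partial sums $M^{\ast}(d)\ll d\exp\bigl(-c(\log d)^{3/5}(\log\log d)^{-1/5}\bigr)$, which is only a sub-polynomial saving over the trivial bound $d\log d$. To compensate you would need, on each dyadic block $D<d\le 2D$, something like $\sum_{D<d\le 2D}|E_T(x/(d+1))-E_T(x/d)|\ll (x/D)(\log x)^{O(1)}$, i.e.\ essentially a total-variation bound on $E_T$ of the same quality as Zhai's \emph{pointwise} bound. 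No such bound is available, and it is almost certainly false: already the single summand $\phi([x/d])$ (the $e=1$ term of $T(x/d)$) jumps by $|\phi(m)-\phi(m-1)|$, which is typically of order $m\asymp x/d$, as $d$ changes by $1$; the smooth main term, whose increment is only $\asymp (x/d^{2})\log(x/d)$, does not cancel this. So the discrete differences of $E_T$ are individually as large as $E_T$ itself, and Abel summation gains nothing over the trivial triangle inequality.

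Your fallback of ``inspecting Zhai's proof'' does lead somewhere, but it is circular. Unfolding $E_T(x/d)$ via $\phi([u])=\sum_{u-1<m\le u}\phi(m)$ and the expansion $\sum_{m\le u}\phi(m)=\tfrac{u^{2}}{2\zeta(2)}+E(u)+\cdots$ with $E(u)=-u\sum_{n}\tfrac{\mu(n)}{n}\psi(u/n)$ produces a triple sum over $d,e,n$ with weights $\mu(d)\log d$, $1$, $\mu(n)$. Regrouping $m=de$ collapses the weight to $\sum_{de=m}\mu(d)\log d=-\Lambda(m)$, and you are back to the bilinear sum $\sum_{m,n}\tfrac{\Lambda(m)}{m}\tfrac{\mu(n)}{n}\psi(x/(mn)+\cdots)$, which is exactly what the paper handles directly.

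The paper therefore avoids the detour. It splits $n\le x$ into three ranges: $n\le R:=\exp\bigl(B(\log x)^{2/3}(\log\log x)^{1/3}\bigr)$ is bounded trivially (this is where the stated error term comes from); for $R<n\le x^{3/4}$ one expands $\phi([x/n])$ as above to obtain sums $E^{(\delta)}=-x\sum\tfrac{\mu(d)}{d}\tfrac{\Lambda(n)}{n}\psi\bigl(\tfrac{x}{dn}-\tfrac{\delta}{d}\bigr)$ and then applies the Korobov--Vinogradov--Walfisz bound of \cref{lem:expsum_KVW} in whichever of $d,n$ is large; for $x^{3/4}<n\le x$ one swaps the summation and invokes Huxley's prime number theorem in short intervals. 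The essential point is that cancellation from the $\Lambda$-weight and from the $\mu$-weight must both be exploited directly against the $\psi$-sums; splitting $\Lambda$ via $-\mu\ast\log$ only obscures this structure.
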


%%%%%%%%%%%%%%%%%%%%%%%%%%%%%%%%%%%%%%%%
\begin{remark}
\label{rem:no_large_sieve}
We somehow simplify the method of Zhai~\cite{Zhai,Zhai:corrigendum}
by using the estimate of exponential sum with arithmetic functions \cref{lem:expsum_KVW}.
This allows one not to use the double large sieve~\cite[Lemma~2.4]{BombieriIwaniec:ZetaOrder}
as in \cite{Zhai:corrigendum} and to use a rather simple decomposition.
\end{remark}

%%%%%%%%%%%%%%%%%%%%%%%%%%%%%%%%%%%%%%%%
The ``new'' ingredient in the proof of \cref{thm:phi_xn_Titchmarsh_vonMangoldt}
is just the estimate of the exponential sum
\begin{equation}
\label{KVW_exponential_sum}
\sum_{P<n\le 2P}\Lambda(n)e\biggl(\frac{Q}{n}\biggr)
\end{equation}
\`{a} la Korobov--Vinogradov--Walfisz,
which is essentially due to H. Q. Liu~\cite{HQLiu}
and the second author gave a detailed argument for the above particular sum \cref{KVW_exponential_sum} in \cite{Yuta:Walfisz}.
It thus seems, at first sight, the same for the case
\begin{equation}
\label{MTD_phi_uneighted}
\sum_{p\le x}\phi\biggl(\biggl[\frac{x}{p}\biggr]\biggr).
\end{equation}
However, there is a notable difference
\begin{align}
\sum_{n\le P}\Lambda(n)\phi\biggl(\biggl[\frac{x}{n}\biggr]\biggr)
&\ll
x\sum_{n\le P}\frac{\Lambda(n)}{n}
\ll
x\log P
\quad\text{vs.}\quad
\sum_{p\le P}\phi\biggl(\biggl[\frac{x}{p}\biggr]\biggr)
\ll
x\sum_{p\le x}\frac{1}{p}
\ll
x\log\log P.
\end{align}
With $P=\exp(O((\log x)^{\frac{2}{3}}(\log\log x)^{\frac{1}{3}}))$,
we can discard the first sum not affecting the main term of the size $\asymp x\log x$.
However, in the latter case, with $P=\exp(O((\log x)^{\frac{2}{3}}(\log\log x)^{\frac{1}{3}}))$,
the second sum is of the same order as the main term of the size $\asymp x\log\log x$.
Note that the level $\exp(O((\log x)^{\frac{2}{3}}(\log\log x)^{\frac{1}{3}}))$
is required to estimate the exponential sum \cref{KVW_exponential_sum}.
This thus gives a new difficulty and we could only give upper and lower bounds
for the sum \cref{MTD_phi_uneighted}:
%%%%%%%%%%%%%%%%%%%%%%%%%%%%%%%%%%%%%%%%
\begin{theorem}
\label{thm:phi_xn_Titchmarsh}
For large $x$, we have
\begin{align}
\sum_{p\le x}\phi\biggl(\biggl[\frac{x}{p}\biggr]\biggr)
&\le
\biggl(\frac{1}{\zeta(2)}+\frac{2}{3}\biggl(1-\frac{1}{\zeta(2)}\biggr)
+O\biggl(\frac{\log\log\log x}{\log\log x}\biggr)\biggr)
x\log\log x,\\
\sum_{p\le x}\phi\biggl(\biggl[\frac{x}{p}\biggr]\biggr)
&\ge
\biggl(\frac{1}{\zeta(2)}-\frac{2}{3}\frac{1}{\zeta(2)}
+O\biggl(\frac{\log\log\log x}{\log\log x}\biggr)\biggr)x\log\log x.
\end{align}
\end{theorem}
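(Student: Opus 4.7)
The plan is to reduce $\sum_{p\le x}\phi([x/p])$ via Abel summation to the $\Lambda$-weighted sum $G(y)\coloneqq\sum_{n\le y}\Lambda(n)\phi([x/n])$, which can be asymptotically evaluated by the method underlying \cref{thm:phi_xn_Titchmarsh_vonMangoldt}. Set $P\coloneqq\exp((\log x)^{2/3}(\log\log x)^{1/3})$, the threshold below which the Korobov--Vinogradov--Walfisz exponential sum estimate ceases to yield nontrivial cancellation (see \cref{rem:no_large_sieve}). For $y\ge P$, a routine adaptation of the proof of \cref{thm:phi_xn_Titchmarsh_vonMangoldt} (extending the dyadic decomposition to the range $n\le y$) gives the asymptotic
\[
G(y)=\frac{x}{\zeta(2)}\log y+O\bigl(x(\log x)^{\frac{2}{3}}(\log\log x)^{\frac{1}{3}}\bigr),
\]
while for $y<P$ we use only the trivial sandwich $0\le G(y)\le x\log y+O(x)$ coming from $0\le\phi(m)\le m$ and Chebyshev's estimates.

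Let $F(y)\coloneqq\sum_{p\le y}(\log p)\phi([x/p])=G(y)+O(x)$, where the correction accounts for the negligible prime-power contribution from $p^{k}$ with $k\ge2$. Abel summation against $1/\log p$ gives
\[
\sum_{p\le x}\phi\biggl(\biggl[\frac{x}{p}\biggr]\biggr)
=
\frac{F(x)}{\log x}+\int_{2}^{x}\frac{F(y)}{y(\log y)^{2}}\,dy,
\]
with boundary term $F(x)/\log x=O(x)$ of lower order. Splitting the integral at $P$ and using $\log\log P=\tfrac{2}{3}\log\log x+\tfrac{1}{3}\log\log\log x$, the contribution from $[P,x]$ equals
\[
\frac{x}{\zeta(2)}\int_{P}^{x}\frac{dy}{y\log y}+O(x)
=
\frac{x\log\log x}{3\zeta(2)}+O(x\log\log\log x),
\]
while the contribution from $[2,P]$ is at most $\int_{2}^{P}x\log y/(y(\log y)^{2})\,dy+O(x)=\tfrac{2}{3}x\log\log x+O(x\log\log\log x)$ and at least $0$. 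Combining the pieces gives the upper bound $\bigl(\tfrac{1}{3\zeta(2)}+\tfrac{2}{3}\bigr)x\log\log x+O(x\log\log\log x)$ and the lower bound $\tfrac{x\log\log x}{3\zeta(2)}+O(x\log\log\log x)$, which are exactly the claims of the theorem after the algebraic rewriting $\tfrac{1}{3\zeta(2)}+\tfrac{2}{3}=\tfrac{1}{\zeta(2)}+\tfrac{2}{3}(1-\tfrac{1}{\zeta(2)})$ and $\tfrac{1}{3\zeta(2)}=\tfrac{1}{\zeta(2)}-\tfrac{2}{3\zeta(2)}$.

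The main obstacle is the inability to improve the trivial bound $G(y)\le x\log y$ for $y\le P$: the Korobov--Vinogradov--Walfisz estimate of \cite{Yuta:Walfisz} loses all cancellation below $P$, and no substitute is currently known for prime-indicator exponential sums $\sum_{p\sim N}e(Q/p)$ attached to primes in a lower range. This failure produces the extra $\tfrac{2}{3}x\log\log x$ separating the upper and lower bounds, an error of the same order as the conjectured main term $\tfrac{x\log\log x}{\zeta(2)}$, and is precisely the new difficulty flagged in the discussion preceding the theorem.
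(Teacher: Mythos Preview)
Your argument is correct and arrives at the same constants, but it is organised differently from the paper's proof. The paper works directly with the prime sum: it splits
\[
\sum_{p\le x}\phi\bigl([x/p]\bigr)
=
\sum_{p\le R}+\sum_{R<p\le x^{3/4}}+\sum_{x^{3/4}<p\le x},
\qquad
R=\exp\bigl(B(\log x)^{2/3}(\log\log x)^{1/3}\bigr),
\]
bounds the first piece trivially via Mertens by $[0,\tfrac{2}{3}x\log\log x+O(x\log\log\log x)]$, evaluates the middle piece as $\tfrac{1}{3\zeta(2)}x\log\log x+R^{(0)}-R^{(1)}+O(x\log\log\log x)$ by expanding $\phi$ and applying \cref{lem:expsum_KVW} with $v=\mathbbm{1}_{n\text{ prime}}$, and bounds the tail by $O(x)$. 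You instead pass through Abel summation to reduce to a \emph{uniform-in-$y$} version of \cref{thm:phi_xn_Titchmarsh_vonMangoldt}, namely $G(y)=\tfrac{x}{\zeta(2)}\log y+O(x(\log x)^{2/3}(\log\log x)^{1/3})$ for $y\ge P$, and then integrate. Your reduction has the virtue of recycling \cref{thm:phi_xn_Titchmarsh_vonMangoldt} almost as a black box and of making transparent that the gap between upper and lower bounds is exactly the integral $\int_{2}^{P}\tfrac{dy}{y\log y}\sim\tfrac{2}{3}\log\log x$ over the range where $G$ is uncontrolled; the paper's route avoids stating the uniform asymptotic for $G(y)$ and works with primes throughout, which keeps the argument self-contained. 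The two decompositions (your integral split at $P$ versus the paper's direct split at $R$) and the resulting main-term computations match term by term.

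One remark on rigour: your claimed uniform formula for $G(y)$ is indeed a routine adaptation of the proof of \cref{thm:phi_xn_Titchmarsh_vonMangoldt}, but it does require checking two things you left implicit: that the exponential-sum bounds for $E^{(\delta)}$ go through with the upper limit $x^{3/4}$ replaced by any $y\le x^{3/4}$ (they do, since \cref{lem:expsum_KVW} is insensitive to the upper endpoint), and that for $x^{3/4}<y\le x$ the contribution of $\sum_{x^{3/4}<n\le y}$ is handled by \cref{lem:Huxley_PNT} to give the correct increment $\tfrac{x}{\zeta(2)}(\log y-\tfrac{3}{4}\log x)+O(x)$. Both are straightforward, but worth a sentence.
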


%%%%%%%%%%%%%%%%%%%%%%%%%%%%%%%%%%%%%%%%
We thus conjecture the following asymptotic formula
though we think currently that this problem is out of reach
according to the above observation and the current limited technique in exponential
sum of the type \cref{KVW_exponential_sum}.
%%%%%%%%%%%%%%%%%%%%%%%%%%%%%%%%%%%%%%%%
\begin{conjecture}
\label{conj:MTD_phi_unweighted}
As $x\to\infty$, we have
\[
\sum_{p\le x}\phi\biggl(\biggl[\frac{x}{p}\biggr]\biggr)
\sim
\frac{1}{\zeta(2)}x\log\log x.
\]
\end{conjecture}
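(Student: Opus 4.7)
The plan is to make the heuristic $\phi(m)\approx m/\zeta(2)$ quantitative over the sequence $m=[x/p]$. Writing $\phi(m)=m\sum_{d\mid m}\mu(d)/d$ and swapping summations,
\[
\sum_{p\le x}\phi\biggl(\biggl[\frac{x}{p}\biggr]\biggr)
=
\sum_{d=1}^{\infty}\frac{\mu(d)}{d}\sum_{\substack{p\le x\\ d\mid[x/p]}}[x/p].
\]
The $d=1$ term equals $\sum_{p\le x}[x/p]=x\log\log x+O(x)$ by Mertens and Abel summation. For each $d\ge 2$, the expected value of the inner sum is $(1/d)(x\log\log x)(1+o(1))$, reflecting the equidistribution of $[x/p]\bmod d$ with density $1/d$; combined with $\sum_{d\ge 1}\mu(d)/d^{2}=1/\zeta(2)$, this reproduces the conjectural coefficient. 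The role of the proof is thus to establish this equidistribution uniformly in $d$.

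To quantify the divisibility, detect the condition $d\mid[x/p]$ via additive characters,
\[
\mathbbm{1}_{d\mid[x/p]}
=
\frac{1}{d}\sum_{j\bmod d}e\biggl(\frac{j[x/p]}{d}\biggr),
\]
separating the $j=0$ main term (which yields the density $1/d$) from the non-principal oscillatory terms $j\ne 0$. Absorbing $[x/p]\approx x/p$ by partial summation, the oscillatory contributions reduce to dyadic pieces of the exponential sum
\[
\sum_{P<p\le 2P}e\biggl(\frac{Q}{p}\biggr),
\qquad Q\asymp jx/d,\quad 1\le P\le x,
\]
for each non-principal $j$ and each $d$. In the range $P\le P_{0}\coloneqq\exp(c(\log x)^{2/3}(\log\log x)^{-1/3})$, the Korobov--Vinogradov--Walfisz method (as in \cref{KVW_exponential_sum}, due to H. Q. Liu and made explicit in \cite{Yuta:Walfisz}) yields enough cancellation to cover the contribution of small primes. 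This is essentially the input behind the rigorous part of \cref{thm:phi_xn_Titchmarsh}.

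The hard part will be the complementary range $P_{0}<P\le x$. Since $\sum_{P_{0}<p\le x}1/p\asymp\log\log x-\log\log P_{0}\asymp\log\log x$ is of the same order as the full prime harmonic sum, the primes in this range contribute an amount comparable to the entire conjectural main term, and no trivial bound can isolate the correct coefficient. To extract the cancellation required by the conjecture, one would need either a prime number theorem in very short intervals of length $\le x/d^{2}$ uniformly for $d$ up to $x^{1/2}$ (well beyond Baker--Harman--Pintz, which only delivers primes in intervals of length $y^{0.525}$ at the scale $y$), or nontrivial cancellation in $\sum_{P<p\le 2P}e(Q/p)$ with $P$ allowed as large as $x/\log x$, far outside the Korobov--Vinogradov--Walfisz reach of $P_{0}$. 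As the authors emphasise in \cref{rem:no_large_sieve} and the discussion preceding \cref{MTD_phi_uneighted}, neither input is presently available. My realistic proposal therefore reduces to isolating this single range as the sole obstruction: any subtrivial saving of size $o(x\log\log x)$ in the oscillatory sums over $(P_{0},x]$ would immediately upgrade \cref{thm:phi_xn_Titchmarsh} to \cref{conj:MTD_phi_unweighted}, but absent a new exponential-sum technology over primes, a complete proof appears to lie beyond current reach.
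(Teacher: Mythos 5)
The statement is a conjecture, not a theorem; the paper does not prove it and explicitly declares it out of reach. Your high-level conclusion agrees with the paper's assessment, and you correctly identify exponential sums with primes \`{a} la Korobov--Vinogradov--Walfisz as the bottleneck. However, you have the obstruction exactly backwards. The KVW bound (\cref{lem:expsum_KVW}) requires the summation scale $P$ to satisfy
\[
\exp\bigl(B(\log Q)^{\frac{2}{3}}(\log\log Q)^{\frac{1}{3}}\bigr)\le P\le Q(\log Q)^{-B},
\]
so it fails when $P$ is \emph{too small}, not when $P$ is close to $x$. In the proof of \cref{thm:phi_xn_Titchmarsh}, the prime sum is split at $R=\exp(B(\log x)^{2/3}(\log\log x)^{1/3})$ and $x^{3/4}$: the middle range $R<p\le x^{3/4}$ is controlled by KVW, the range $x^{3/4}<p\le x$ is trivially $O(x)$ because $[x/p]$ is small there and $\sum_{x^{3/4}<p\le x}1/p\ll1$, and the genuinely unresolved range is the \emph{small} primes $p\le R$, whose contribution $\asymp\tfrac{2}{3}x\log\log x$ is precisely the gap between the upper and lower bounds of \cref{thm:phi_xn_Titchmarsh}. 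Your claim that KVW covers $P\le P_0$ and that the hard range is $P_0<p\le x$ reverses this, and your appeal to a short-interval prime number theorem at scale $x/d^2$ for large $d$ is aimed at a range the paper already dispatches trivially. (Also note the sign typo: your $P_0$ should have $(\log\log x)^{+1/3}$, not $(\log\log x)^{-1/3}$.)

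Two secondary issues with the proposed decomposition. First, writing $\phi(m)=m\sum_{d\mid m}\mu(d)/d$ and detecting $d\mid[x/p]$ by additive characters leaves a floor function inside the phase: $e(j[x/p]/d)=e(jx/(dp))\,e(-j\{x/p\}/d)$, and the factor $e(-j\{x/p\}/d)$ is not removed by ``partial summation''; it is an oscillating weight that itself needs a Fourier/Vaaler expansion, producing a genuinely bilinear object. Second, your route differs structurally from the paper's: the paper instead writes $\phi([x/p])=\sum_{m\le x/p}\phi(m)-\sum_{m\le x/p-1}\phi(m)$, expands $\sum_{n\le u}\phi(n)$ via \cref{lem:Euler_phi_remainder}, and lands on $\psi$-sums of the shape $\psi\bigl((x/p-\delta)/d\bigr)$, where KVW is applied to whichever of the $d$- or $p$-variable dominates. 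That formulation avoids the floor-in-phase difficulty entirely. Your proposal would need to be reworked along these lines, and even then, the small-prime range $p\le R$ would remain the sole (and currently insurmountable) obstruction.
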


%%%%%%%%%%%%%%%%%%%%%%%%%%%%%%%%%%%%%%%%
On the contrary,
by the conjecture of Montgomery~\cite[Formula~(9)]{Montgomery:FluctuationEulerPhi}:
\[
\sum_{n\le x}\phi(n)
=
\frac{1}{2\zeta(2)}x^{2}
+
\Omega_{\pm}(x\log\log x)
\quad(x\to\infty),
\]
\cref{conj:MTD_phi_unweighted} can be rather doubtful.

%%%%%%%%%%%%%%%%%%%%%%%%%%%%%%%%%%%%%%%%
We shall prove
\cref{thm:MTD_easy_with_vonMangoldt,thm:MTD_easy_unweighted}
in \cref{sec:xp_small}
and \cref{thm:phi_xn_Titchmarsh_vonMangoldt,thm:phi_xn_Titchmarsh}
in \cref{sec:xp_phi}.

%%%%%%%%%%%%%%%%%%%%%%%%%%%%%%%%%%%%%%%%
\section{Notation}
\label{sec:notation}
Throughout the paper
$x,y,z,u,A,B,H,Q,U,X,s,\epsilon$ denote positive real numbers,
$\theta,\delta,\eta$ denote real numbers,
$m,n,d,q,P,\nu$ denote positive integers and
$a,b,h,K$ denote integers.
The letter $N$ denote a positive integer or a positive real number.
The letter $k$ denote a positive integer
or the member of an exponent pair $(k,\ell)$.
The letter $p$ is reserved for prime numbers
or the degrees of differentiation.
The letter $c$ is used for positive constants
which can take different value line by line.
The functions $\tau(n),\mu(n),\phi(n),\Lambda(n)$ stand for
the divisor function (the number of divisors of $n$),
the M\"obius function,
the Euler totient function and
the von Mangoldt function.
The function $\zeta(s)$ stands for the Riemann zeta function.

%%%%%%%%%%%%%%%%%%%%%%%%%%%%%%%%%%%%%%%%
For integers $m$ and $n$, we write $(m,n)$ for their greatest common divisor,
which can be easily distinguished from the pair $(m,n)$ by the context.

%%%%%%%%%%%%%%%%%%%%%%%%%%%%%%%%%%%%%%%%
For a real number $x$, the symbol $[x]$ stands for the largest integer $\le x$
and $\{x\}\coloneqq x-[x]$ is the fractional part of $x$.
The function $\psi(x)$ is defined by
\[
\psi(x)\coloneqq\{x\}-\tfrac{1}{2}.
\]
which is the sawtooth function. Also, we write $e(x)\coloneqq\exp(2\pi ix)$.

%%%%%%%%%%%%%%%%%%%%%%%%%%%%%%%%%%%%%%%%
For $x\in\mathbb{R}_{\ge1}$, we let
\[
\mathscr{S}(x)
\coloneqq
\{[x/n]\mid n\in[1,x]\cap\mathbb{Z}\}.
\]
For $x\in\mathbb{R}_{\ge1}$, $a\in\mathbb{Z}$ and $q\in\mathbb{N}$, we let
\[
S(x,q,a)
\coloneqq
\sum_{\substack{
n\le x\\\relax
[x/n]\equiv a\ \mod{q}
}}1
\and
S^{\ast}(x,q,a)
\coloneqq
\sum_{\substack{
n\in\mathscr{S}(x)\\
n\equiv a\ \mod{q}
}}
1.
\]
%For $n\in\mathbb{N}$, we let
%\begin{align}
%p_{\max}(n)
%&\coloneqq
%\left\{
%\begin{array}{>{\displaystyle}cl}
%\max\{
%p:\text{prime factor of $n$}
%\}&\text{for $n>1$},\\[2mm]
%1&\text{for $n=1$},
%\end{array}
%\right.\\[4mm]
%p_{\min}(n)
%&\coloneqq
%\left\{
%\begin{array}{>{\displaystyle}cl}
%\min\{
%p:\text{prime factor of $n$}
%\}&\text{for $n>1$},\\[2mm]
%+\infty&\text{for $n=1$}.
%\end{array}
%\right.
%\end{align}
%For $x\ge y,z\ge 1$, we let
%\begin{align}
%\Psi(x,y)
%&\coloneqq
%\#\{n\le x\mid p_{\max}(n)\le y\},\\
%\Phi(x,z)
%&\coloneqq
%\#\{n\le x\mid p_{\min}(n)>z\}.
%\end{align}

%indicator function
For a logical formula $P$,
we write $\mathbbm{1}_{P}$
for the indicator function of $P$.
%We do not use this notation for higher order function
%which cannot be reduced to the first order language.

%convention on implicit constants
If Theorem or Lemma is stated
with the phrase ``where the implicit constants depend on $a,b,c,\ldots$'',
then every implicit constant in the corresponding proof
may also depend on $a,b,c,\ldots$ even without special mentions.

%%%%%%%%%%%%%%%%%%%%%%%%%%%%%%%%%%%%%%%%
\section{Distribution of the \texorpdfstring{$[x/n]$}{[x/n]}-sequence in arithmetic progressions}
\label{sec:distribution_AP}
In this section, we prove \cref{thm:WuYu_improved}.
We first expand the error terms of $S(x,q,a)$ and $S^{\ast}(x,q,a)$
into the sum over the sawtooth function $\psi$.
Fortunately, we get the same type of the sum with $\psi$
for both of $S(x,q,a)$ and $S^{\ast}(x,q,a)$.

%%%%%%%%%%%%%%%%%%%%%%%%%%%%%%%%%%%%%%%%
\begin{proposition}
\label{prop:S_d_decomp}
For $x\ge1$, $a,q\in\mathbb{Z}$ with $1\le a\le q$ and $q\ge1$, we have
\[
S(x,q,a)
=
x\sum_{\substack{n=1\\n\equiv a\ \mod{q}}}^{\infty}\frac{1}{n(n+1)}
+R_{1}+R_{2}+O(1),
\]
where
\begin{align}
R_{1}
&\coloneqq
-
\sum_{n\le\sqrt{\frac{x}{q}}}
\biggl(\psi\biggl(\frac{x}{qn+a}\biggr)-\psi\biggl(\frac{x}{qn+a+1}\biggr)\biggr),\\
R_{2}
&\coloneqq
-
\sum_{n\le\sqrt{\frac{x}{q}}}
\biggl(\psi\biggl(\frac{x}{qn}-\frac{a}{q}\biggr)-\psi\biggl(\frac{x}{qn}-\frac{a+1}{q}\biggr)\biggr).
\end{align}
\end{proposition}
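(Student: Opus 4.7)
The plan is to apply a Dirichlet-type hyperbola split to $S(x,q,a)$ at the level $N\coloneqq[\sqrt{x/q}]$, using two different expressions for $\mathbbm{1}_{[x/n]\equiv a\ \mod{q}}$ depending on the size of $n$.

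For the small-$n$ part $A\coloneqq\sum_{n\le N}\mathbbm{1}_{[x/n]\equiv a\ \mod{q}}$, I would first note that $[x/n]\equiv a\ \mod{q}$ is equivalent to the fractional-part condition $\{x/(qn)-a/q\}\in[0,1/q)$, and then use the elementary identity $\mathbbm{1}_{\{y\}\in[0,1/q)}=[y]-[y-1/q]$ (valid for every real $y$) to write
\[
\mathbbm{1}_{[x/n]\equiv a\ \mod{q}}
=[x/(qn)-a/q]
-[x/(qn)-(a+1)/q].
\]
Expanding each floor by $[y]=y-\tfrac12-\psi(y)$, the linear parts contribute $1/q$ per term since the two arguments differ by exactly $1/q$, whereas the $\psi$-parts exactly match those defining $R_2$. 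Summing over $n\le N$ yields $A=N/q+R_2$.

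For the large-$n$ part $B\coloneqq S(x,q,a)-A$, I would swap the counting to the variable $k=[x/n]=qm+a$, using $\#\{n:[x/n]=k\}=[x/k]-[x/(k+1)]$. The condition $n>N$ splits into two cases: either $[x/(k+1)]\ge N$, in which case the $n$-count is unchanged as $[x/k]-[x/(k+1)]$, or $k$ lies in a narrow transition range around $k\approx x/N$. A short calculation shows this transition range has length at most $1$ and that the corresponding count vanishes, since $x/k\in[N,N+1/q)$ forces $[x/k]=N$. Hence, up to an $O(1)$ correction at the top of the $m$-sum,
\[
B=\sum_{m=0}^{N}\bigl([x/(qm+a)]-[x/(qm+a+1)]\bigr)+O(1).
\]
Again using $[y]=y-\tfrac12-\psi(y)$ and absorbing the $m=0$ boundary of the $\psi$-terms into the $O(1)$, this becomes $B=x\sum_{m=0}^{N}\frac{1}{(qm+a)(qm+a+1)}+R_1+O(1)$.

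Finally I add $A+B$ and extend the main-term sum to $m=\infty$. A direct integral comparison combined with $x/q=N^{2}+O(N)$ shows
\[
x\sum_{m=N+1}^{\infty}\frac{1}{(qm+a)(qm+a+1)}=\frac{N}{q}+O(1),
\]
so this tail cancels the $N/q$ produced by $A$. Substituting $n=qm+a$ converts the extended sum into $\sum_{n\ge1,\ n\equiv a\ \mod{q}}\frac{1}{n(n+1)}$, yielding the asserted formula. The main obstacle is pure bookkeeping: obtaining the advertised $O(1)$ error, rather than the apparent $O(N)=O(\sqrt{x/q})$, requires that the transition-range contribution, the truncation of the $m$-sum at $N$, and the tail cancellation all be honest $O(1)$ rather than merely $O(N)$. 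None of the pieces is individually hard, but they must be orchestrated carefully.
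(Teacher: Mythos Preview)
Your proposal is correct and is essentially the paper's hyperbola argument presented dually: you split by $n\le N$ versus $n>N$, while the paper splits by $\nu=[x/n]\le\sqrt{xq}$ versus $\nu>\sqrt{xq}$, so your $A$ and $B$ coincide (up to $O(1)$ boundary terms) with the paper's $S^{(2)}$ and $S^{(1)}$ respectively, and the computations that follow are the same. The one passage that reads oddly is the justification ``$x/k\in[N,N+1/q)$'' for the transition count; what actually makes that count vanish is that $k_0=[x/N]$ satisfies $[x/k_0]=N$ whenever $x\ge N^2$ (which holds since $N=[\sqrt{x/q}]$), but this is precisely the $O(1)$ bookkeeping you anticipated.
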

%%%%%%%%%%%%%%%%%%%%%%%%%%%%%%%%%%%%%%%%
\begin{proof}
We first show the case $q\ge x$.
In this case, $R_{1},R_{2}\ll1$.
If $a>x$, then the left-hand side of the assertion is $0$
and the first term on the right-hand side is
\[
x\sum_{\substack{n=1\\n\equiv a\ \mod{q}}}^{\infty}\frac{1}{n(n+1)}
\ll
x\sum_{\substack{n\ge a}}\frac{1}{n(n+1)}
\ll
\frac{x}{a}
\ll
1
\]
since $x<a\le q$. Thus, the case $q\ge x$ and $a>x$ is trivial.
If $1\le a\le x$, the left-hand side is reduced to
\[
S(x,q,a)
=
\sum_{\substack{n\le x\\\relax[x/n]=a}}1
=
\sum_{\substack{n\in\mathbb{N}\\\relax[x/n]=a}}1
\]
since $q\ge x$. By using
\[
\biggl[\frac{x}{n}\biggr]=a
\iff
a\le\frac{x}{n}<a+1
\iff
\frac{x}{a+1}<n\le\frac{x}{a},
\]
we then have
\[
S(x,q,a)
=
\biggl[\frac{x}{a}\biggr]-\biggl[\frac{x}{a+1}\biggr]
=
\frac{x}{a(a+1)}+O(1).
\]
On the other hand, the first term on the right-hand side is
\[
x\sum_{\substack{n=1\\n\equiv a\ \mod{q}}}^{\infty}\frac{1}{n(n+1)}
=
\frac{x}{a(a+1)}
+
x\sum_{\substack{n>q\\n\equiv a\ \mod{q}}}\frac{1}{n(n+1)}
=
\frac{x}{a(a+1)}+O(1).
\]
Thus, the assertion holds if $q\ge x$ and $1\le a\le x$.

%%%%%%%%%%%%%%%%%%%%%%%%%%%%%%%%%%%%%%%%
We then prove the case $1\le q<x$. We have
\begin{align}
S(x,q,a)
&=
\sum_{\substack{\nu\le x\\\nu\equiv a\ \mod{q}}}
\sum_{\substack{n\le x\\\relax[x/n]=\nu}}
1
=
\sum_{\substack{\nu\le x\\\nu\equiv a\ \mod{q}}}
\sum_{\substack{n\le x\\\relax\nu\le x/n<\nu+1}}
1\\
&=
\sum_{\substack{\nu\le\sqrt{xq}\\\nu\equiv a\ \mod{q}}}
\sum_{\substack{n\le x\\\relax\nu\le x/n<\nu+1}}
1
+
\sum_{\substack{\sqrt{xq}<\nu\le x\\\nu\equiv a\ \mod{q}}}
\sum_{\substack{n\le x\\\relax\nu\le x/n<\nu+1}}
1
\eqqcolon
S^{(1)}
+
S^{(2)},\quad\text{say},
\end{align}
the decomposition of which is valid since $1\le q<x$. For $S^{(1)}$, we have
\begin{align}
S^{(1)}
&=
\sum_{\substack{\nu\le\sqrt{xq}\\\nu\equiv a\ \mod{q}}}
\sum_{\frac{x}{\nu+1}<n\le\min(\frac{x}{\nu},x)}
1\\
&=
\sum_{\substack{\nu\le\sqrt{xq}\\\nu\equiv a\ \mod{q}}}
\biggl(\biggl[\frac{x}{\nu}\biggr]-\biggl[\frac{x}{\nu+1}\biggr]\biggr)\\
&=
\sum_{\substack{n\le\sqrt{xq}\\n\equiv a\ \mod{q}}}\frac{x}{n(n+1)}
-
\sum_{\substack{n\le\sqrt{xq}\\n\equiv a\ \mod{q}}}
\biggl(\psi\biggl(\frac{x}{n}\biggr)-\psi\biggl(\frac{x}{n+1}\biggr)\biggr)\\
&=
\sum_{\substack{n\le\sqrt{xq}\\n\equiv a\ \mod{q}}}\frac{x}{n(n+1)}
-
\sum_{0\le n\le\sqrt{\frac{x}{q}}-\frac{a}{q}}
\biggl(\psi\biggl(\frac{x}{qn+a}\biggr)-\psi\biggl(\frac{x}{qn+a+1}\biggr)\biggr)\\
&=
\sum_{\substack{n\le\sqrt{xq}\\n\equiv a\ \mod{q}}}\frac{x}{n(n+1)}
-
\sum_{n\le\sqrt{\frac{x}{q}}}
\biggl(\psi\biggl(\frac{x}{qn+a}\biggr)-\psi\biggl(\frac{x}{qn+a+1}\biggr)\biggr)
+
O(1),
\end{align}
where we used $1\le a\le q$ in the last equality.
For $S^{(2)}$, we have
\begin{align}
S^{(2)}
&=
\sum_{\substack{\sqrt{xq}<\nu\le x\\\nu\equiv a\ \mod{q}}}
\sum_{\substack{n\le x\\\relax\nu\le x/n<\nu+1}}
1
=
\sum_{n\le\sqrt{\frac{x}{q}}}
\sum_{\substack{\frac{x}{n}-1<\nu\le\frac{x}{n}\\\sqrt{xq}<\nu\le x\\\nu\equiv a\ \mod{q}}}
1.
\end{align}
Note that
\[
n\le\sqrt{\frac{x}{q}}-1
\implies
\frac{x}{n}-1
\ge
\sqrt{xq}
\]
since $1\le q<x$ implies
\[
\frac{x}{\sqrt{\frac{x}{q}}-1}-1
\ge
\sqrt{xq}
\iff
x
\ge
(\sqrt{xq}+1)(\sqrt{x/q}-1)
\iff
x\ge
x-\sqrt{xq}+\sqrt{x/q}-1.
\]
Therefore, since $q\ge1$, we have
\begin{align}
S^{(2)}
=
\sum_{n\le\sqrt{\frac{x}{q}}}
\sum_{\substack{\frac{x}{n}-1<\nu\le\frac{x}{n}\\\nu\equiv a\ \mod{q}}}
1
+
O(1).
\end{align}
We now have
\begin{align}
\sum_{n\le\sqrt{\frac{x}{q}}}
\sum_{\substack{\frac{x}{n}-1<\nu\le\frac{x}{n}\\\nu\equiv a\ \mod{q}}}
1
&=
\sum_{n\le\sqrt{\frac{x}{q}}}
\sum_{\frac{x}{qn}-\frac{a+1}{q}<\nu\le\frac{x}{qn}-\frac{a}{q}}
1\\
&=
\frac{\sqrt{x}}{q^{\frac{3}{2}}}
-
\sum_{n\le\sqrt{\frac{x}{q}}}
\biggl(\psi\biggl(\frac{x}{qn}-\frac{a}{q}\biggr)-\psi\biggl(\frac{x}{qn}-\frac{a+1}{q}\biggr)\biggr).
\end{align}
Therefore, we have
\[
S^{(2)}
=
\frac{\sqrt{x}}{q^{\frac{3}{2}}}
+R_{2}
+O(1).
\]
By combining the above results, we get
\[
S(x,q,a)
=
\sum_{\substack{n\le\sqrt{xq}\\n\equiv a\ \mod{q}}}\frac{x}{n(n+1)}
+
\frac{\sqrt{x}}{q^{\frac{3}{2}}}
+
R_{1}+R_{2}+O(1).
\]
We now have
\begin{align}
\sum_{\substack{n>\sqrt{xq}\\n\equiv a\ \mod{q}}}\frac{x}{n(n+1)}
&=
\sum_{n>\sqrt{\frac{x}{q}}-\frac{a}{q}}\frac{x}{(qn+a)(qn+a+1)}\\
&=
\sum_{n>\sqrt{\frac{x}{q}}-\frac{a}{q}}\frac{x}{(qn+a)^{2}}
+
O\biggl(\sum_{n>\sqrt{\frac{x}{q}}-\frac{a}{q}}\frac{x}{(qn+a)^{3}}\biggr)\\
&=
\sum_{n>\sqrt{\frac{x}{q}}-\frac{a}{q}}\frac{x}{(qn+a)^{2}}
+
O\biggl(\frac{x}{q^{3}}\sum_{n>\sqrt{\frac{x}{q}}-\frac{a}{q}}\frac{1}{(n+\frac{a}{q})^{3}}\biggr)\\
&=
\frac{x}{q^{2}}\sum_{n>\sqrt{\frac{x}{q}}-\frac{a}{q}}\frac{1}{(n+\frac{a}{q})^{2}}
+
O\biggl(\frac{1}{q^{2}}\biggr)\\
&=
\frac{x}{q^{2}}\int_{\sqrt{\frac{x}{q}}}^{\infty}\frac{du}{u^{2}}
+
O\biggl(\frac{1}{q}\biggr)\\
&=
\frac{\sqrt{x}}{q^{\frac{3}{2}}}
+
O\biggl(\frac{1}{q}\biggr)
\end{align}
and so the assertion follows.
\end{proof}

%%%%%%%%%%%%%%%%%%%%%%%%%%%%%%%%%%%%%%%%
\begin{proposition}
\label{prop:S_d_ast_decomp_pre}
For $x\ge1$, $a,q\in\mathbb{Z}$ with $1\le a\le q$ and $q\ge1$, we have
\[
S^{\ast}(x,q,a)
=
\frac{\sqrt{x}}{q}
+
\sum_{\frac{\sqrt{x}}{q}-\frac{a}{q}<n\le\frac{x}{q}-\frac{a}{q}}
\biggl(
\biggl[\frac{x}{qn+a}\biggr]
-
\biggl[\frac{x}{qn+a+1}\biggr]
\biggr)
+O(1).
\]
\end{proposition}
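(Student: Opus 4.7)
The plan is to decompose $S^{\ast}(x,q,a)$ at the threshold $\nu=\sqrt{x}$, exploiting two structural features of $\mathscr{S}(x)$. The small values $1\le\nu\le\sqrt{x}$ all lie in $\mathscr{S}(x)$, so that portion of the count reduces to a plain arithmetic-progression count. The large values $\nu>\sqrt{x}$ satisfy $\nu(\nu+1)>x$, so the defining interval $(x/(\nu+1),x/\nu]$ has length $<1$ and contains at most one integer; membership in $\mathscr{S}(x)$ is then equivalent to the floor difference $[x/\nu]-[x/(\nu+1)]$ being $1$.

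To verify the first structural fact, I would take $n\coloneqq[x/\nu]$ for $1\le\nu\le\sqrt{x}$: the inequality $\nu\le\sqrt{x}$ gives $x/\nu\ge\nu$, hence $n\ge\nu$, and combining with $n\nu\le x<n\nu+n=n(\nu+1)$ forces $[x/n]=\nu$, so $\nu\in\mathscr{S}(x)$. The second structural fact is immediate from the length bound $x/(\nu(\nu+1))<1$. Together these identities give the clean decomposition
\[
S^{\ast}(x,q,a)
=
\sum_{\substack{1\le\nu\le\sqrt{x}\\\nu\equiv a\ \mod{q}}}1
+
\sum_{\substack{\sqrt{x}<\nu\le x\\\nu\equiv a\ \mod{q}}}
\biggl(\biggl[\frac{x}{\nu}\biggr]-\biggl[\frac{x}{\nu+1}\biggr]\biggr).
\]
The first sum counts integers in a single residue class modulo $q$ inside $[1,\sqrt{x}]$, and since $1\le a\le q$ it equals $\sqrt{x}/q+O(1)$ (even in the edge case $a>\sqrt{x}$, where the count degenerates to $0$ while $\sqrt{x}/q\le 1$). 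In the second sum, the substitution $\nu=qn+a$ turns the range $\sqrt{x}<\nu\le x$ into $\frac{\sqrt{x}}{q}-\frac{a}{q}<n\le\frac{x}{q}-\frac{a}{q}$ and the summand into $[x/(qn+a)]-[x/(qn+a+1)]$, which is precisely the expression asserted in the proposition.

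This proposition is a purely combinatorial set-up preparing the sawtooth-function sums that are then estimated via exponent pairs in the proof of \cref{thm:WuYu_improved}, so there is no genuine analytic obstacle. The only care needed is the $O(1)$ rounding at the boundary $\nu\approx\sqrt{x}$ (where a handful of integers may be on the ``wrong side'' of either structural fact), which is absorbed into the stated error term.
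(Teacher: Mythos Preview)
Your proof is correct and follows essentially the same approach as the paper: split at $\nu=\sqrt{x}$, observe that all small $\nu$ lie in $\mathscr{S}(x)$ while for large $\nu$ the indicator $\mathbbm{1}_{\nu\in\mathscr{S}(x)}$ coincides with the floor difference $[x/\nu]-[x/(\nu+1)]\in\{0,1\}$, then re-index the large-$\nu$ sum via $\nu=qn+a$. The only cosmetic difference is that the paper first records the general equivalence $\nu\in\mathscr{S}(x)\iff[x/\nu]-[x/(\nu+1)]>0$ and deduces both halves from it, whereas you verify the two halves directly; in fact your split at $\sqrt{x}$ is exact (no boundary fudge is needed), so your closing hedge about $O(1)$ rounding is unnecessary but harmless.
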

%%%%%%%%%%%%%%%%%%%%%%%%%%%%%%%%%%%%%%%%
\begin{proof}
For $n\in[1,x]\cap\mathbb{N}$, we have
\begin{align}
n\in\mathscr{S}(x)
&\iff
\exists \nu\in[1,x]\cap \mathbb{Z}\ \text{s.t.}\ n=\biggl[\frac{x}{\nu}\biggr]\\
&\iff
\exists \nu\in[1,x]\cap \mathbb{Z}\ \text{s.t.}\ n\le\frac{x}{\nu}<n+1\\
&\iff
\exists \nu\in[1,x]\cap \mathbb{Z}\ \text{s.t.}\ \frac{x}{n+1}<\nu\le\frac{x}{n}\\
&\iff
\exists \nu\in\mathbb{Z}\ \text{s.t.}\ \frac{x}{n+1}<\nu\le\frac{x}{n}\\
&\iff
\biggl[\frac{x}{n}\biggr]
-
\biggl[\frac{x}{n+1}\biggr]>0.
\end{align}
Therefore, we have
\begin{align}
S^{\ast}(x,q,a)
=
\sum_{\substack{n\le x\\n\equiv a\ \mod{q}}}
\mathbbm{1}_{[\frac{x}{n}]-[\frac{x}{n+1}]>0}
&=
\sum_{\substack{n\le\sqrt{x}-1\\n\equiv a\ \mod{q}}}
+
\sum_{\substack{\sqrt{x}<n\le x\\n\equiv a\ \mod{q}}}
{}+{}
O(1)\\
&=
S^{(1)}
+
S^{(2)}
+
O(1),\quad\text{say}.
\end{align}
In $S^{(1)}$, we have $n\le\sqrt{x}-1$ and so
\[
\biggl[\frac{x}{n}\biggr]-\biggl[\frac{x}{n+1}\biggr]
>
\frac{x}{n}-\frac{x}{n+1}-1
=
\frac{x}{n(n+1)}-1
>
\frac{x}{(n+1)^{2}}-1
\ge
0.
\]
This gives
\[
S^{(1)}
=
\sum_{\substack{n\le\sqrt{x}-1\\n\equiv a\ \mod{q}}}
\mathbbm{1}_{[\frac{x}{n}]-[\frac{x}{n+1}]>0}
=
\sum_{\substack{n\le\sqrt{x}-1\\n\equiv a\ \mod{q}}}
1
=
\frac{\sqrt{x}}{q}+O(1).
\]
On the other hand, in $S^{(2)}$, we have $n>\sqrt{x}$ and so
\[
\biggl[\frac{x}{n}\biggr]-\biggl[\frac{x}{n+1}\biggr]
<
\frac{x}{n}-\frac{x}{n+1}+1
=
\frac{x}{n(n+1)}+1
<
\frac{x}{n^{2}}+1
<
2.
\]
This gives
\[
\biggl[\frac{x}{n}\biggr]-\biggl[\frac{x}{n+1}\biggr]
\in\{0,1\}
\quad\text{and so}\quad
\mathbbm{1}_{[\frac{x}{n}]-[\frac{x}{n+1}]>0}
=
\biggl[\frac{x}{n}\biggr]-\biggl[\frac{x}{n+1}\biggr]
\]
in $S^{(2)}$. Therefore, we have
\[
S^{(2)}
=
\sum_{\substack{\sqrt{x}<n\le x\\n\equiv a\ \mod{q}}}
\biggl(\biggl[\frac{x}{n}\biggr]-\biggl[\frac{x}{n+1}\biggr]\biggr)
=
\sum_{\substack{\frac{\sqrt{x}}{q}-\frac{a}{q}<n\le\frac{x}{q}-\frac{a}{q}}}
\biggl(\biggl[\frac{x}{qn+a}\biggr]-\biggl[\frac{x}{qn+a+1}\biggr]\biggr).
\]
This completes the proof.
\end{proof}

%%%%%%%%%%%%%%%%%%%%%%%%%%%%%%%%%%%%%%%%
\begin{proposition}
\label{prop:S_d_ast_decomp}
For $x\ge1$, $a,q\in\mathbb{Z}$ with $1\le a\le q$ and $q\ge1$, we have
\[
S^{\ast}(x,q,a)
=
\frac{2\sqrt{x}}{q}
+
R_{1}^{\ast}
+
R_{2}
+O(1),
\]
where
\begin{align}
R_{1}^{\ast}
&\coloneqq
-
\sum_{\frac{\sqrt{x}}{q}<n\le\sqrt{\frac{x}{q}}}
\biggl(\psi\biggl(\frac{x}{qn+a}\biggr)-\psi\biggl(\frac{x}{qn+a+1}\biggr)\biggr),\\
R_{2}
&\coloneqq
-
\sum_{n\le\sqrt{\frac{x}{q}}}
\biggl(
\psi\biggl(\frac{x}{qn}-\frac{a}{q}\biggr)
-
\psi\biggl(\frac{x}{qn}-\frac{a+1}{q}\biggr)
\biggr).
\end{align}
\textup{(}Note that $R_{2}$ is the same as in \cref{prop:S_d_decomp}.\textup{)}
\end{proposition}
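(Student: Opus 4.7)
The plan is to start from \cref{prop:S_d_ast_decomp_pre} and use the identity $\psi(y)=y-[y]-\tfrac{1}{2}$ (equivalently $[y]=y-\psi(y)-\tfrac{1}{2}$) twice, together with a Dirichlet-style swap of summation borrowed from the proof of \cref{prop:S_d_decomp}, to separate the resulting sawtooth remainder into the two pieces $R_{1}^{\ast}$ and $R_{2}$. Write $T$ for the sum over $\frac{\sqrt{x}-a}{q}<n\le\frac{x-a}{q}$ in the conclusion of \cref{prop:S_d_ast_decomp_pre}, so that $S^{\ast}(x,q,a)=\sqrt{x}/q+T+O(1)$.

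First I would decompose $T=A-B$ by inserting $[y]=y-\psi(y)-\tfrac{1}{2}$ inside each difference $[x/(qn+a)]-[x/(qn+a+1)]$, where $A\coloneqq\sum x/((qn+a)(qn+a+1))$ is the smooth part and $B\coloneqq\sum(\psi(x/(qn+a))-\psi(x/(qn+a+1)))$ is the rough part, both over the same range. A Riemann-sum comparison with $\frac{1}{q}\int_{\sqrt{x}}^{x}xt^{-2}\,dt$ immediately gives $A=\sqrt{x}/q+O(1)$, accounting for the second $\sqrt{x}/q$ that must appear in the conclusion.

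Next I split $B=B_{1}+B_{2}$ at the threshold $n=\sqrt{x/q}$. Since the endpoint $(\sqrt{x}-a)/q$ differs from $\sqrt{x}/q$ by at most $a/q\le 1$ and each summand is $O(1)$, the lower portion satisfies $B_{1}=-R_{1}^{\ast}+O(1)$. For the upper portion $B_{2}$ with $\sqrt{x/q}<n\le(x-a)/q$, I substitute $m=qn+a$ to rewrite it as $\sum_{\sqrt{xq}<m\le x,\;m\equiv a\pmod{q}}(\psi(x/m)-\psi(x/(m+1)))$ and then reverse direction using $\psi(y)=y-[y]-\tfrac{1}{2}$ again to split $B_{2}=A'-C$, where $A'=\sum x/(m(m+1))=\sqrt{x}/q^{3/2}+O(1)$ by the same Riemann-sum estimate and $C=\sum([x/m]-[x/(m+1)])$ with $m$ in the same range.

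The main step is the swap performed on $C$, precisely mirroring the calculation of $S^{(2)}$ in the proof of \cref{prop:S_d_decomp}: each difference $[x/m]-[x/(m+1)]$ counts integers $\nu$ with $x/(m+1)<\nu\le x/m$, and since $m=[x/\nu]$ is then uniquely determined by $\nu$, exchanging the roles of $m$ and $\nu$ turns $C$ into a sum over $\nu\le\sqrt{x/q}$ counting integers $k$ with $qk+a\in(x/\nu-1,x/\nu]$. Applying $[y]=y-\psi(y)-\tfrac{1}{2}$ to this inner count produces exactly $C=\sqrt{x}/q^{3/2}+R_{2}+O(1)$, so that $B_{2}=A'-C=-R_{2}+O(1)$. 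Assembling, $T=A-B_{1}-B_{2}=\sqrt{x}/q+R_{1}^{\ast}+R_{2}+O(1)$, and combining with \cref{prop:S_d_ast_decomp_pre} yields the claim. I expect the trickiest bookkeeping to be the transition from the $n$-variable to the $\nu$-variable through the shifts by $a/q$, where the thresholds $\sqrt{x/q}$ and $\sqrt{xq}$ must align carefully so that all boundary discrepancies collapse into the final $O(1)$.
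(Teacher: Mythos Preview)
Your proposal is correct and follows essentially the same route as the paper: split the range of $n$ at the threshold $\sqrt{x/q}$, handle the short lower piece by the expansion $[y]=y-\psi(y)-\tfrac12$ to produce $R_{1}^{\ast}$, and handle the long upper piece by the hyperbola swap (exactly the $S^{(2)}$ computation from \cref{prop:S_d_decomp}) to produce $R_{2}$.

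The only difference is organizational. The paper splits $T=T^{(1)}+T^{(2)}$ \emph{first} and then expands $[y]=y-\psi(y)-\tfrac12$ only on $T^{(1)}$, while on $T^{(2)}$ it swaps directly on the integer differences $[x/m]-[x/(m+1)]$ without ever passing through $\psi$. You instead expand over the whole range, obtaining the full smooth piece $A=\sqrt{x}/q+O(1)$ and the full sawtooth piece $B$, and then on the upper portion $B_{2}$ you must \emph{undo} the expansion (your step $B_{2}=A'-C$) before you can swap. This is harmless but introduces the auxiliary term $A'=\sqrt{x}/q^{3/2}+O(1)$ that immediately cancels against the smooth contribution of $C$; the paper avoids this detour, and its two smooth contributions $\frac{x}{q^{2}}\int_{\sqrt{x}/q}^{\sqrt{x/q}}u^{-2}\,du$ and $\sqrt{x}/q^{3/2}$ combine directly to $\sqrt{x}/q$. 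Either bookkeeping is fine.
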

%%%%%%%%%%%%%%%%%%%%%%%%%%%%%%%%%%%%%%%%
\begin{proof}
We first consider the case $q\ge x$.
In this case,
the left-hand side is $\ll1$ since there are at most one integer in $[1,x]$
congruent to $a\ \mod{q}$. Also,
all terms on the right-hand side is $\ll1$.
Thus, the assertion is trivial for the case $q\ge x$.

%%%%%%%%%%%%%%%%%%%%%%%%%%%%%%%%%%%%%%%%
We next consider the case $1\le q<x$.
By \cref{prop:S_d_ast_decomp_pre}, it suffices to show
\[
T
\coloneqq
\sum_{\frac{\sqrt{x}}{q}-\frac{a}{q}<n\le\frac{x}{q}-\frac{a}{q}}
\biggl(
\biggl[\frac{x}{qn+a}\biggr]
-
\biggl[\frac{x}{qn+a+1}\biggr]
\biggr)
=
\frac{\sqrt{x}}{q}
+
R_{1}^{\ast}
+
R_{2}
+
O(1).
\]
We first decompose the sum as
\[
T
=
\sum_{\frac{\sqrt{x}}{q}-\frac{a}{q}<n\le\sqrt{\frac{x}{q}}-\frac{a}{q}}
+
\sum_{\sqrt{\frac{x}{q}}-\frac{a}{q}<n\le\frac{x}{q}-\frac{a}{q}}
=
T^{(1)}+T^{(2)},
\quad\text{say},
\]
the decomposition of which is valid since $1\le q<x$.
For the sum $T^{(1)}$, we have
\begin{align}
T^{(1)}
&=
\sum_{\frac{\sqrt{x}}{q}-\frac{a}{q}<n\le\sqrt{\frac{x}{q}}-\frac{a}{q}}
\frac{x}{(qn+a)(qn+a+1)}\\
&\hspace{30mm}-
\sum_{\frac{\sqrt{x}}{q}-\frac{a}{q}<n\le\sqrt{\frac{x}{q}}-\frac{a}{q}}
\biggl(
\psi\biggl(\frac{x}{qn+a}\biggr)
-
\psi\biggl(\frac{x}{qn+a+1}\biggr)
\biggr)\\
&=
\sum_{\frac{\sqrt{x}}{q}-\frac{a}{q}<n\le\sqrt{\frac{x}{q}}-\frac{a}{q}}
\frac{x}{(qn+a)(qn+a+1)}\\
&\hspace{30mm}
-
\sum_{\frac{\sqrt{x}}{q}<n\le\sqrt{\frac{x}{q}}}
\biggl(
\psi\biggl(\frac{x}{qn+a}\biggr)
-
\psi\biggl(\frac{x}{qn+a+1}\biggr)
\biggr)
+O(1),
\end{align}
where we used $1\le a\le q$ in the last equality.
For this main term, we have
\begin{align}
&\sum_{\frac{\sqrt{x}}{q}-\frac{a}{q}<n\le\sqrt{\frac{x}{q}}-\frac{a}{q}}
\frac{x}{(qn+a)(qn+a+1)}\\
&=
\sum_{\frac{\sqrt{x}}{q}-\frac{a}{q}<n\le\sqrt{\frac{x}{q}}-\frac{a}{q}}
\frac{x}{(qn+a)^{2}}
+
O\biggl(
\sum_{\frac{\sqrt{x}}{q}-\frac{a}{q}<n\le\sqrt{\frac{x}{q}}-\frac{a}{q}}
\frac{x}{(qn+a)^{3}}
\biggr)\\
&=
\sum_{\frac{\sqrt{x}}{q}-\frac{a}{q}<n\le\sqrt{\frac{x}{q}}-\frac{a}{q}}
\frac{x}{(qn+a)^{2}}
+
O\biggl(
\frac{x}{q^{3}}
\sum_{\frac{\sqrt{x}}{q}-\frac{a}{q}<n\le\sqrt{\frac{x}{q}}-\frac{a}{q}}
\frac{1}{(n+\frac{a}{q})^{3}}
\biggr)\\
&=
\sum_{\frac{\sqrt{x}}{q}-\frac{a}{q}<n\le\sqrt{\frac{x}{q}}-\frac{a}{q}}
\frac{x}{(qn+a)^{2}}
+
O\biggl(
\frac{1}{q}
\biggr)\\
&=
\frac{x}{q^{2}}
\int_{\frac{\sqrt{x}}{q}}^{\sqrt{\frac{x}{q}}}
\frac{du}{u^{2}}
+
O(1).
\end{align}
We thus have
\begin{equation}
T^{(1)}
=
\frac{x}{q^{2}}
\int_{\frac{\sqrt{x}}{q}}^{\sqrt{\frac{x}{q}}}
\frac{du}{u^{2}}
+
R_{1}^{\ast}
+
O(1).
\end{equation}
For the sum $T^{(2)}$, we have
\begin{align}
T^{(2)}
=
\sum_{\sqrt{\frac{x}{q}}-\frac{a}{q}<\nu\le\frac{x}{q}-\frac{a}{q}}
\biggl(
\biggl[\frac{x}{q\nu+a}\biggr]
-
\biggl[\frac{x}{q\nu+a+1}\biggr]
\biggr)
&=
\sum_{\sqrt{\frac{x}{q}}-\frac{a}{q}<\nu\le\frac{x}{q}-\frac{a}{q}}
\sum_{\frac{x}{q\nu+a+1}<n\le\frac{x}{q\nu+a}}
1\\
&=
\sum_{n\le\sqrt{\frac{x}{q}}}
\sum_{\substack{
\sqrt{\frac{x}{q}}-\frac{a}{q}<\nu\le\frac{x}{q}-\frac{a}{q}\\
\frac{x}{qn}-\frac{a+1}{q}<\nu\le\frac{x}{qn}-\frac{a}{q}
}}
1.
\end{align}
Note that
\[
n\le\sqrt{\frac{x}{q}}-1
\implies
\frac{x}{qn}-\frac{a+1}{q}\ge\sqrt{\frac{x}{q}}-\frac{a}{q}
\]
since $1\le q<x$ implies
\begin{align}
\frac{x}{q(\sqrt{\frac{x}{q}}-1)}-\frac{a+1}{q}\ge\sqrt{\frac{x}{q}}-\frac{a}{q}
&\iff
x
\ge
(\sqrt{xq}+1)
(\sqrt{x/q}-1)\\
&\iff
x
\ge
x-\sqrt{xq}+\sqrt{x/q}-1.
\end{align}
Therefore, we have
\begin{align}
T^{(2)}
&=
\sum_{n\le\sqrt{\frac{x}{q}}}
\sum_{\frac{x}{qn}-\frac{a+1}{q}<\nu\le\frac{x}{qn}-\frac{a}{q}}
1
+O(1)\\
&=
\frac{\sqrt{x}}{q^{\frac{3}{2}}}
-
\sum_{n\le\sqrt{\frac{x}{q}}}
\biggl(
\psi\biggl(\frac{x}{qn}-\frac{a}{q}\biggr)
-
\psi\biggl(\frac{x}{qn}-\frac{a+1}{q}\biggr)
\biggr)
+O(1).
\end{align}
By combining the above results, we have
\begin{align}
T
=
\frac{x}{q^{2}}
\int_{\frac{\sqrt{x}}{q}}^{\sqrt{\frac{x}{q}}}
\frac{du}{u^{2}}
+
\frac{\sqrt{x}}{q^{\frac{3}{2}}}
+
R_{1}^{\ast}
+
R_{2}
+
O(1)
=
\frac{\sqrt{x}}{q}
+
R_{1}^{\ast}
+
R_{2}
+
O(1).
\end{align}
This completes the proof.
\end{proof}

%%%%%%%%%%%%%%%%%%%%%%%%%%%%%%%%%%%%%%%%%
%\begin{lemma}
%\label{lem:YuWu_weak}
%For $x\in\mathbb{R}_{\ge1}$, $a\in\mathbb{Z}$ and $q\in\mathbb{N}$, we have
%\begin{align}
%S(x,q,a)
%&=
%x\sum_{\substack{n=1\\n\equiv a\ \mod{q}}}^{\infty}\frac{1}{n(n+1)}
%+
%O\biggl(
%\biggl(\frac{x}{q}\biggr)^{\frac{1}{2}}
%+
%1
%\biggr),\\
%S^{\ast}(x,q,a)
%&=
%\frac{2\sqrt{x}}{q}
%+
%O\biggl(
%\biggl(\frac{x}{q}\biggr)^{\frac{1}{2}}
%+
%1
%\biggr),
%\end{align}
%where the implicit constants are absolute.
%\end{lemma}
%%%%%%%%%%%%%%%%%%%%%%%%%%%%%%%%%%%%%%%%%
%\begin{proof}
%Since the left-hand sides are invariant under the shift $a\leadsto a+q$,
%we may assume $0\le a<q$. When $q>x$, then the left-hand side is bounded as $\ll1$
%since then there are $\ll1$ positive integers congruent to $a\ \mod{q}$ up to $x$.
%Also, if $q>x$, the first terms on the right hand sides are $\ll1$.
%Thus, the assertion holds trivially if $q>x$ and so we may assume $q\le x$.
%We can then use \cref{prop:S_d_decomp,prop:S_d_ast_decomp} and the trivial estimates
%\[
%R_{1},R_{1}^{\ast},R_{2}\ll\biggl(\frac{x}{q}\biggr)^{\frac{1}{2}}
%\]
%to obtain the assertion.
%\end{proof}

%%%%%%%%%%%%%%%%%%%%%%%%%%%%%%%%%%%%%%%%
We now prepare some tools on the exponential sums.

%%%%%%%%%%%%%%%%%%%%%%%%%%%%%%%%%%%%%%%%
\begin{lemma}[Vaaler's approximation]
\label{lem:Vaaler}
For $H>0$,
there is a Fourier polynomial
\[
\psi_{H}(x)=-\sum_{1\le|h|<H}\frac{c_{H}(h)}{2\pi ih}e(hx)
\]
with real numbers $(c_{H}(h))_{1\le|h|<H}$ such that
\begin{equation}
\label{lem:Vaaler:error}
|\psi(x)-\psi_H(x)|\le\frac{1}{2H}\sum_{|h|<H}\left(1-\frac{|h|}{H}\right)e(hx)
\quad\text{for}\quad x\in\mathbb{R}
\end{equation}
and
\begin{equation}
\label{lem:Vaaler:c_bound}
|c_{H}(h)|\le1\quad\text{for $h\neq0$}.
\end{equation}
\end{lemma}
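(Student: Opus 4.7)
The plan is to invoke the standard Vaaler construction of near-optimal trigonometric approximations to the sawtooth function, originating in Vaaler's 1985 paper and reproduced in Graham--Kolesnik. The argument proceeds in three stages, and because the authors here use this only as a black-box tool, I would ultimately quote the conclusion from one of these references; but for a self-contained sketch, here is how it goes.

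First, construct the Beurling extremal majorant $B(z)$ of the sign function: this is the unique entire function of exponential type $2\pi$ satisfying $B(x)\ge\mathrm{sgn}(x)$ on $\mathbb{R}$ and minimizing $\int_{\mathbb{R}}(B(x)-\mathrm{sgn}(x))\,dx=1$. Explicitly, $B$ is built from the squared-sinc kernel $K(z)=(\sin\pi z/\pi z)^{2}$. Setting $J(z):=\tfrac{1}{2}(B(z)-B(-z))$ produces an odd entire function of exponential type $2\pi$ approximating $\tfrac{1}{2}\mathrm{sgn}(x)$ on $\mathbb{R}$, and an appropriate periodization of $J$ yields an entire approximant $V(z)$ to the sawtooth $\psi(x)$ with the pointwise bound $|V(x)-\psi(x)|\le\tfrac{1}{2}K(x)$ on $\mathbb{R}$.

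Second, rescale and periodize: for $H>0$, set $V_{H}(x):=V(Hx)$ and define
\[
\psi_{H}(x):=\sum_{n\in\mathbb{Z}}V_{H}(x+n)/H.
\]
Because $V_{H}$ has Fourier transform supported in $[-H,H]$, Poisson summation reduces the sum above to a trigonometric polynomial of degree strictly less than $H$, which (discarding the vanishing mean of $\psi$) one may write as
\[
\psi_{H}(x)=-\sum_{1\le|h|<H}\frac{c_{H}(h)}{2\pi i h}e(hx),\quad\text{where}\quad c_{H}(h)=\widehat{J}(h/H).
\]
The inequality \cref{lem:Vaaler:error} follows by periodizing the pointwise bound $|V(Hx)-\psi(Hx)|\le\tfrac{1}{2}K(Hx)$ and recognizing the periodization of $K(Hx)/H$ as the Fej\'er kernel $\frac{1}{H}\sum_{|h|<H}(1-|h|/H)e(hx)$.

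Third, for the coefficient bound $|c_{H}(h)|\le 1$, use the description of $\widehat{J}$ coming from the Beurling extremal problem, which gives $\widehat{J}(\xi)\in\mathbb{R}$ with $|\widehat{J}(\xi)|\le 1$ for all $\xi\in\mathbb{R}$. Since $c_{H}(h)=\widehat{J}(h/H)$, both conclusions (reality and the uniform bound) are inherited at once. The only genuinely delicate point is the extremal analysis producing $|\widehat{J}|\le 1$, which is classical and I would cite directly from Vaaler; everything else above is bookkeeping around Poisson summation and the Fej\'er kernel identity.
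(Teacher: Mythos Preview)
Your proposal is correct and matches the paper's approach exactly: the paper's proof consists of the single sentence ``This follows by the same construction as Theorem~A.6 of \cite{GrahamKolesnik},'' and your sketch is precisely an outline of that Graham--Kolesnik/Vaaler construction, with the same intended citation as the final justification.
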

%%%%%%%%%%%%%%%%%%%%%%%%%%%%%%%%%%%%%%%%
\begin{proof}
This follows by the same construction as Theorem~A.6 of \cite{GrahamKolesnik}.
\end{proof}

%%%%%%%%%%%%%%%%%%%%%%%%%%%%%%%%%%%%%%%%
\begin{lemma}
\label{lem:Erdos_Turan}
For a finite sequence $(x_{n})_{n=1}^{N}$ of real numbers and $H>0$, we have
\[
\biggl|\sum_{n=1}^{N}\psi(x_{n})\biggr|
\ll
\sum_{1\le h\le H}\frac{1}{h}
\biggl|
\sum_{n=1}^{N}e(hx_{n})
\biggr|
+
\frac{N}{H},
\]
where the implicit constant is absolute.
\end{lemma}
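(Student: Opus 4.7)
The plan is to apply Vaaler's approximation (\cref{lem:Vaaler}) to each $\psi(x_n)$ individually and then sum. Writing $\psi(x_n) = \psi_H(x_n) + (\psi(x_n) - \psi_H(x_n))$ and using the triangle inequality, the task splits into bounding a ``main'' contribution coming from $\psi_H$ and an ``error'' controlled by \cref{lem:Vaaler:error}. Both pieces produce exponential sums of the type $\sum_n e(h x_n)$, and the goal is to arrange their weights so they match the claimed bound.

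For the main contribution, expanding the definition of $\psi_H$ and interchanging the order of summation yields
\[
\sum_{n=1}^{N} \psi_H(x_n) = -\sum_{1 \le |h| < H} \frac{c_H(h)}{2\pi i h} \sum_{n=1}^{N} e(h x_n).
\]
Taking absolute values, invoking $|c_H(h)| \le 1$ from \cref{lem:Vaaler:c_bound}, and pairing the terms with $h$ and $-h$ (which have equal moduli since $\sum_n e(-h x_n)$ is the complex conjugate of $\sum_n e(h x_n)$) immediately delivers a bound of the desired shape, namely $\ll \sum_{1 \le h < H} \frac{1}{h} \bigl|\sum_n e(h x_n)\bigr|$.

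For the error, I would exploit the fact that the right-hand side of \cref{lem:Vaaler:error}, viewed as a function of $x$, is a Fej\'er-type kernel and therefore non-negative. This legitimizes summing the pointwise inequality over $n$ and interchanging the two sums \emph{without} taking absolute values on the right. Isolating the $h = 0$ term produces the clean contribution $N/(2H)$, while the remaining $h \neq 0$ terms are bounded by $\frac{1}{H}\sum_{1 \le h < H}\bigl|\sum_n e(h x_n)\bigr|$ after discarding the factor $1 - |h|/H \le 1$; since $\frac{1}{H} \le \frac{1}{h}$ for $1 \le h \le H$, this is dominated by the main-term sum and can simply be absorbed. Combining the two pieces delivers the claimed estimate. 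No essential difficulty is anticipated; the only structural point to keep in mind is the non-negativity of the Fej\'er kernel, which is exactly what makes the pointwise-then-sum strategy lossless for the error term.
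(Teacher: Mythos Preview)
Your proposal is correct and follows essentially the same route as the paper: split $\psi=\psi_H+(\psi-\psi_H)$, expand $\psi_H$ using $|c_H(h)|\le1$ and the conjugate symmetry of $\sum_n e(hx_n)$, and control the error by summing the pointwise Vaaler inequality over $n$, separating the $h=0$ term as $N/(2H)$ and absorbing the rest via $\frac{1}{H}\le\frac{1}{h}$. The only cosmetic difference is that the paper inserts an explicit triangle inequality $\bigl|\sum_n(\psi-\psi_H)(x_n)\bigr|\le\sum_n|\psi-\psi_H|(x_n)$ before invoking \cref{lem:Vaaler:error}, whereas you phrase the same step as summing the pointwise bound directly.
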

%%%%%%%%%%%%%%%%%%%%%%%%%%%%%%%%%%%%%%%%
\begin{proof}
We use $\psi_{H}$ given in \cref{lem:Vaaler}.
We have
\begin{align}
\biggl|\sum_{n=1}^{N}(\psi(x_{n})-\psi_{H}(x_{n}))\biggr|
&\le
\sum_{n=1}^{N}|\psi(x_{n})-\psi_{H}(x_{n})|\\
&\le
\frac{1}{2H}\sum_{|h|<H}\biggl(1-\frac{|h|}{H}\biggr)
\sum_{n=1}^{N}e(hx_{n})
\ll
\frac{1}{H}\sum_{0<|h|<H}
\biggl|\sum_{n=1}^{N}e(hx_{n})\biggr|
+
\frac{N}{H}.
\end{align}
Since $0<|h|<H$ in the above sum and
\[
\sum_{n=1}^{N}e(-hx_{n})
=
\overline{\sum_{n=1}^{N}e(hx_{n})},
\]
we have
\[
\biggl|\sum_{n=1}^{N}(\psi(x_{n})-\psi_{H}(x_{n}))\biggr|
\ll
\sum_{1\le h\le H}\frac{1}{h}
\biggl|\sum_{n=1}^{N}e(hx_{n})\biggr|
+
\frac{N}{H}.
\]
Also, we have
\begin{align}
\biggl|
\sum_{n=1}^{N}
\psi_{H}(x_{n})
\biggr|
=
\biggl|\sum_{1\le|h|<H}\frac{c_{H}(h)}{2\pi i h}\sum_{n=1}^{N}e(hx_{n})\biggr|
\ll
\sum_{1\le h\le H}\frac{1}{h}\biggl|\sum_{n=1}^{N}e(hx_{n})\biggr|
\end{align}
by using the bound \cref{lem:Vaaler:c_bound} of \cref{lem:Vaaler}.
By combining the above results, we obtain the assertion.
\end{proof}

%%%%%%%%%%%%%%%%%%%%%%%%%%%%%%%%%%%%%%%%
We now quickly recall the definition of exponent pair~\cite[Chapter~3]{GrahamKolesnik}

%%%%%%%%%%%%%%%%%%%%%%%%%%%%%%%%%%%%%%%%
\begin{definition}[Model phase function]
Let $y$, $s$ be positive real numbers.
We define \textsl{the model phase function $\phi(x)=\phi_{y,s}(x)$ of amplitude $y$ and index $s$} by
\[
\phi(x)
=\phi_{y,s}(x)
\coloneqq
\left\{
\begin{array}{ll}
\displaystyle
\frac{yx^{1-s}}{1-s}&(\text{if $s\neq1$}),\\[3mm]
y\log x&(\text{if $s=1$}).
\end{array}
\right.
\]
\end{definition}

%%%%%%%%%%%%%%%%%%%%%%%%%%%%%%%%%%%%%%%%
\begin{definition}[Admissible phase function]
Let $N$, $y$, $s$, $\epsilon$ be positive real numbers
and $P$ be a positive integer.
A real-valued function $f$ defined on $[a,b]$
is said to be
\textsl{an admissble phase function of scale $N$, order $P$, amplitude $y$, index $s$ and precision $\epsilon$}
if
\begin{enumerate}[label=\textup{(\roman*)}]
\item $[a,b]\subset[N,2N]$,
\item $f$ is $P$-times continuously differentiable on $[a,b]$,
\item for any integer $p$ with $1\le p\le P$, we have
\[
|f^{(p)}(x)-\phi_{y,s}^{(p)}(x)|
\le
\epsilon\cdot|\phi_{y,s}^{(p)}(x)|,
\]
where $\phi_{y,s}(x)$ is the model phase function of amplitude $y$ and index $s$.
\end{enumerate}
We denote the set of all admissble phase functions
of scale $N$, order $P$, amplitude $y$, index $s$ and precision $\epsilon$
by $\mathscr{F}(N,P,y,s,\epsilon)$.
\end{definition}

%%%%%%%%%%%%%%%%%%%%%%%%%%%%%%%%%%%%%%%%
\begin{definition}[Exponent pair]
\label{def:exp_pair}
A pair of real numbers $(k,\ell)$ with
\begin{equation}
\label{exp_pair:range}
0\le k\le\tfrac{1}{2}\le\ell\le1
\end{equation}
is said to be \textsl{an exponent pair} if for any positive real number $s$,
there are a positive integer $P=P(k,\ell,s)$ and a positive real number $\epsilon=\epsilon(k,\ell,s)$
such that for any positive real numbers $N, y, s$
and any admissible function $f\in\mathscr{F}(N,P,y,s,\epsilon)$ defined on $[a,b]$, we have
\begin{equation}
\label{exp_pair:bound}
\sum_{a<n\le b}e(f(n))
\ll
L^{k}N^{\ell}+L^{-1},
\end{equation}
where $L\coloneqq yN^{-s}$ and the implicit constant depends only on $k,\ell,s$.
\end{definition}

%%%%%%%%%%%%%%%%%%%%%%%%%%%%%%%%%%%%%%%%
\begin{lemma}
\label{lem:exp_sum}
For the sum
\[
\mathfrak{S}_{\delta,\eta}(N,N')
\coloneqq
\sum_{N<n\le N'}
\psi\biggl(\frac{x}{qn+\delta}-\eta\biggr)
\]
with
\[
x\ge1,\quad
q\in\mathbb{N},\quad
N\ge 1,\quad
N\le N'\le 2N,\quad
0\le\delta\le q+1
\and
\eta\in\mathbb{R}
\]
and an exponent pair $(k,\ell)$, we have
\[
\mathfrak{S}_{\delta,\eta}(N,N')
\ll
(x/q)^{\frac{k}{k+1}}
N^{\frac{\ell-k}{k+1}}
+
(x/q)^{-1}N^{2},
\]
where the implicit constant depends only on $k,\ell$.
\end{lemma}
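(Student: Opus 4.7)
The plan is to apply the Erdős--Turán-type bound \cref{lem:Erdos_Turan} to convert $\mathfrak{S}_{\delta,\eta}(N,N')$ into a truncated family of exponential sums with phases
\[
f_h(n) \coloneqq \frac{hx}{qn+\delta} - h\eta \qquad (1\le h\le H),
\]
estimate each via \cref{def:exp_pair}, and then optimize the truncation level $H$. The key observation is that, up to a sign, $f_h$ agrees closely with the model phase function of amplitude $y = hx/q$ and index $s=2$, since the shift $\delta/q$ is bounded by $2$.

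Before applying the machinery, I would dispose of the case when $N$ is smaller than a constant $C = C(k,\ell)$ chosen so that $2/N$ is below the precision $\epsilon=\epsilon(k,\ell,2)$ required by \cref{def:exp_pair}. In this regime the trivial bound $|\mathfrak{S}_{\delta,\eta}(N,N')|\le N/2$ already implies the lemma, since the right-hand side of the claim is always $\gg 1$ (if $x/q\ge 1$ then the first term is $\ge 1$; otherwise the second term exceeds $N^{2}\ge 1$). For $N\ge C$, one checks that the (appropriately signed) phase $-f_h + h\eta$ lies in $\mathscr{F}(N, P, hx/q, 2, \epsilon)$ for all $h\ge 1$: the verification reduces to the fact that
\[
\frac{f_h^{(p)}(n)}{\phi_{y,2}^{(p)}(n)} = \biggl(\frac{n}{n+\delta/q}\biggr)^{\!p+1}
\]
is within $\epsilon$ of $1$ uniformly in $h$.

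Applying \cref{lem:Erdos_Turan} with parameter $H\ge 1$ and then \cref{def:exp_pair} to each inner sum yields, with $L_h \coloneqq hx/(qN^{2})$,
\[
|\mathfrak{S}_{\delta,\eta}(N,N')|
\ll
\sum_{h\le H}\frac{1}{h}\bigl(L_h^{k} N^{\ell} + L_h^{-1}\bigr) + \frac{N}{H}
\ll
\biggl(\frac{x}{q}\biggr)^{\!k} N^{\ell-2k} H^{k} + \frac{qN^{2}}{x} + \frac{N}{H},
\]
where we have used $\sum_{h\le H}h^{k-1}\ll H^{k}$ (whose only borderline case $k=0$ is handled by taking $H=1$ and the trivial exponent pair $(0,1)$) and $\sum h^{-2}\ll 1$. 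Setting $H \asymp (x/q)^{-k/(k+1)} N^{(1-\ell+2k)/(k+1)}$ balances the first and third terms on the right and recovers the two claimed main terms $(x/q)^{k/(k+1)} N^{(\ell-k)/(k+1)}$ and $(x/q)^{-1}N^{2}$.

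The main (mild) obstacle is verifying admissibility uniformly in $h$: one needs the precision $\epsilon$ in \cref{def:exp_pair} to depend only on $(k,\ell,s)$, independently of $h$, $x$, $q$, and $\delta$. Fortunately the relative error of $f_h$ against the model is a purely geometric quantity in $\delta/(qn)$ as displayed above, so this uniformity comes for free. A secondary issue is that the optimal $H$ might fall below $1$; this happens exactly when $N^{1+2k-\ell}\le (x/q)^{k}$, which is in turn equivalent to $N\le (x/q)^{k/(k+1)}N^{(\ell-k)/(k+1)}$, so in this regime the trivial bound $N$ already satisfies the claim.
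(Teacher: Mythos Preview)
Your proof is correct and follows essentially the same route as the paper: apply \cref{lem:Erdos_Turan}, verify that the (sign-adjusted) phase lies in $\mathscr{F}(N,P,hx/q,2,\epsilon)$ via the ratio $(n/(n+\delta/q))^{p+1}$, invoke the exponent-pair bound, and balance $H$. The paper disposes of $k=0$ by citing the fact that then $\ell=1$ so the claim is the trivial bound $N$, and simply says ``optimize $H>0$'' without separately discussing the $H<1$ regime; your treatment of these edge cases is a bit more explicit but otherwise the arguments coincide.
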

%%%%%%%%%%%%%%%%%%%%%%%%%%%%%%%%%%%%%%%%
\begin{proof}
We may assume that $N$ is sufficiently large in terms of $k,\ell$.
Also, if $k=0$, then we should have $\ell=1$ as shown in the second last paragraph in p.~31 of \cite{GrahamKolesnik}
and so the assertion is trivial since $(x/q)^{\frac{k}{k+1}}N^{\frac{\ell-k}{k+1}}=N$ if $(k,\ell)=(0,1)$.
We may thus also assume $k>0$.
By \cref{lem:Erdos_Turan}, we have
\begin{align}
\mathfrak{S}_{\delta,\eta}(N,N')
&\ll
\sum_{1\le h\le H}
\frac{1}{h}
\biggl|
\sum_{N<n\le N'}
e\biggl(\frac{hx}{qn+\delta}-h\eta\biggr)
\biggr|
+
\frac{N}{H}\\
&\ll
\sum_{1\le h\le H}
\frac{1}{h}
\biggl|
\sum_{N<n\le N'}
e\biggl(-\frac{hx}{qn+\delta}\biggr)
\biggr|
+
\frac{N}{H}.
\end{align}
Let us consider the phase function
\[
f
\colon
[N,N']
\to
\mathbb{R}
\semicolon
u\mapsto
-\frac{hx}{qu+\delta}.
\]
For $p\in\mathbb{Z}_{\ge1}$ and $u\in[N,N']$, we have
\begin{align}
f^{(p)}(u)
=
(-1)^{p-1}p!
\biggl(\frac{hx}{q}\biggr)
\frac{1}{(u+\frac{\delta}{q})^{p+1}}
&=
(-1)^{p-1}p!
\biggl(\frac{hx}{qu^{p+1}}\biggr)
\biggl(
1+O_{p}\biggl(\frac{\delta}{qu}\biggr)
\biggr)\\
&=
\phi_{\frac{hx}{q},2}^{(p)}(u)
\biggl(
1+O_{p}\biggl(\frac{1}{u}\biggr)
\biggr)
\end{align}
since $0\le\delta\le q$. Thus, if $N$ is sufficiently large in terms of $(k,\ell)$, we have
\[
f\in\mathscr{F}(N,P,\tfrac{hx}{q},2,\epsilon)
\]
with $P=P(k,\ell,2)$ and $\epsilon=\epsilon(k,\ell,2)$
given as in \cref{def:exp_pair}. This gives
\[
\sum_{N<n\le N'}
e\biggl(-\frac{hx}{dn+\delta}\biggr)
\ll
\biggl(\frac{hx}{qN^{2}}\biggr)^{k}N^{\ell}
+
\frac{qN^{2}}{hx}
=
\biggl(\frac{hx}{q}\biggr)^{k}N^{\ell-2k}
+
\frac{qN^{2}}{hx}.
\]
Therefore, since we are assuming $k>0$, we have
\begin{align}
\mathfrak{S}_{\delta,\eta}(N,N')
\ll
\biggl(\frac{Hx}{q}\biggr)^{k}N^{\ell-2k}
+
\frac{N^{2}}{x/q}
+
\frac{N}{H}.
\end{align}
Since
\[
\biggl(\biggl(\frac{Hx}{q}\biggr)^{k}N^{\ell-2k}\biggr)^{\frac{1}{k+1}}
\biggl(\frac{N}{H}\biggr)^{\frac{k}{k+1}}
=
\biggl(\frac{x}{q}\biggr)^{\frac{k}{k+1}}
N^{\frac{\ell-k}{k+1}},
\]
by optimizing $H>0$, we obtain the lemma.
\end{proof}

%%%%%%%%%%%%%%%%%%%%%%%%%%%%%%%%%%%%%%%%
We can then bound the sum with $\psi$ as follows:

%%%%%%%%%%%%%%%%%%%%%%%%%%%%%%%%%%%%%%%%
\begin{lemma}
\label{lem:R_bound}
For an exponent pair $(k,\ell)$, $x\ge1$, $a,q\in\mathbb{Z}$ with $1\le a\le q$ and $q\ge1$,
\[
R_{1},R_{1}^{\ast},R_{2}
\ll
\biggl(\frac{x}{q}\biggr)^{\frac{k+\ell}{2k+2}}
+
1,
\]
where $R_{1},R_{1}^{\ast},R_{2}$ are as in \cref{prop:S_d_decomp,prop:S_d_ast_decomp}
and the implicit constant depends only on $(k,\ell)$.
\end{lemma}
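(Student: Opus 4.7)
The three sums $R_{1}$, $R_{1}^{\ast}$, $R_{2}$ all have the same basic shape: each is a difference of two sums, each of which matches the template
$\mathfrak{S}_{\delta,\eta}(N,N')$ of \cref{lem:exp_sum}. For $R_{1}$ the two sums correspond to $\delta\in\{a,a+1\}$ and $\eta=0$; for $R_{1}^{\ast}$ the same holds with the lower endpoint of the summation shifted to $\sqrt{x}/q$; for $R_{2}$ we take $\delta=0$ and $\eta\in\{a/q,(a+1)/q\}$. In every case the hypothesis $1\le a\le q$ gives $0\le\delta\le q+1$ and $\eta\in\mathbb{R}$ as required by \cref{lem:exp_sum}. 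The strategy is therefore to bound each of these four underlying $\psi$-sums separately by the same estimate and add.

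The main step is a standard dyadic decomposition. The variable $n$ runs over a subinterval of $[1,\sqrt{x/q}]$, so I split it into $O(\log(x/q))$ dyadic blocks $(N,N']$ with $N\le N'\le 2N$ and $N\ll\sqrt{x/q}$, plus a trivial $O(1)$ contribution from the initial block. For each dyadic block, \cref{lem:exp_sum} yields
\[
\sum_{N<n\le N'}\psi\biggl(\frac{x}{qn+\delta}-\eta\biggr)
\ll
(x/q)^{\frac{k}{k+1}}N^{\frac{\ell-k}{k+1}}+(x/q)^{-1}N^{2}.
\]
The constraint $0\le k\le\tfrac12\le\ell\le1$ in \cref{def:exp_pair} guarantees $\ell-k\ge 0$, so both terms on the right are nondecreasing in $N$. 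Consequently, summing over dyadic blocks is essentially a geometric progression in $N$ dominated by the last term $N\asymp\sqrt{x/q}$, which gives
\[
(x/q)^{\frac{k}{k+1}}\cdot(x/q)^{\frac{\ell-k}{2(k+1)}}+(x/q)^{-1}\cdot(x/q)
=
(x/q)^{\frac{k+\ell}{2k+2}}+1,
\]
exactly matching the asserted bound.

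The main obstacle is the edge case $\ell=k$, forcing $(k,\ell)=(\tfrac12,\tfrac12)$: here the first term of the dyadic bound becomes independent of $N$, and the naive geometric argument loses a factor $\log(x/q)$. For $\ell>k$ the geometric ratio is a fixed constant larger than $1$ and the summation is clean, so the bound in the lemma follows directly; for the degenerate case I would verify the stated bound by a separate, explicit argument (for instance by using a slightly stronger admissible exponent pair whenever $(\tfrac12,\tfrac12)$ would be invoked). Beyond this one accounting issue, the only routine check is confirming that the constraint $0\le\delta\le q+1$ in \cref{lem:exp_sum} is preserved by the splits, that the hypothesis $N\le N'\le 2N$ can be arranged, and that the tail contribution $(x/q)^{-1}N^{2}$ summed dyadically really does collapse to the stated $O(1)$.
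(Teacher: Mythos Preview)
Your proposal is correct and follows essentially the same approach as the paper: dyadic decomposition of the range $[1,\sqrt{x/q}]$, application of \cref{lem:exp_sum} on each block, and summation of the resulting geometric series using $\ell>k$. The paper handles the degenerate case $(k,\ell)=(\tfrac12,\tfrac12)$ exactly as you suggest, by replacing it with the concrete exponent pair $BA^{3}B(0,1)=(\tfrac{11}{30},\tfrac{16}{30})$, which gives a strictly smaller value of $\tfrac{k+\ell}{2k+2}$.
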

%%%%%%%%%%%%%%%%%%%%%%%%%%%%%%%%%%%%%%%%
\begin{proof}
Since the exponent pair $(k,\ell)=BA^{3}B(0,1)=(\frac{11}{30},\frac{16}{30})$ satisfies
\[
\frac{k+\ell}{2k+2}\biggr\vert_{(k,\ell)=(\frac{11}{30},\frac{16}{30})}
=
\frac{27}{82}
<
\frac{1}{3}
=
\frac{k+\ell}{2k+2}\biggr\vert_{(k,\ell)=(\frac{1}{2},\frac{1}{2})},
\]
we may assume $(k,\ell)\neq(\frac{1}{2},\frac{1}{2})$ and so $k<\ell$.
When $\frac{x}{q}\ll1$, we obviously have
$R_{1},R_{1}^{\ast},R_{2}\ll\sqrt{x/q}+1\ll1$.
Thus, we may assume $x/q$ is sufficiently large.
By decomposing dyadically, we have
\[
R_{1},R_{1}^{\ast},R_{2}
\ll
\sum_{\substack{m\ge 0\\2^{m}\le\sqrt{x/q}}}
\sup_{\substack{\delta\in[0,q+1]\\\eta\in\mathbb{R}}}
|\mathfrak{S}_{\delta,\eta}(2^{m},\min(2^{m+1},\sqrt{x/q}))|
+
1
\]
By \cref{lem:exp_sum}, we have
\begin{align}
R_{1},R_{1}^{\ast},R_{2}
\ll
\sum_{\substack{m\ge 0\\2^{m}\le\sqrt{x/q}}}
\biggl(
\biggl(\frac{x}{q}\biggr)^{\frac{k}{k+1}}
(2^{m})^{\frac{\ell-k}{k+1}}
+
(x/q)^{-1}(2^{m})^{2}
\biggr)
+1
\ll
\biggl(\frac{x}{q}\biggr)^{\frac{k+\ell}{2k+2}}
+1
\end{align}
since $k<\ell$. This completes the proof.
\end{proof}

%%%%%%%%%%%%%%%%%%%%%%%%%%%%%%%%%%%%%%%%
Now \cref{thm:WuYu_improved} is just a corollary of the above results.

%%%%%%%%%%%%%%%%%%%%%%%%%%%%%%%%%%%%%%%%
\begin{proof}[Proof of \cref{thm:WuYu_improved}]
Since the left-hand sides of the assertions of \cref{thm:WuYu_improved} are invariant under the shift $a\leadsto a+q$,
we may assume $1\le a\le q$.
We can then use \cref{prop:S_d_decomp,prop:S_d_ast_decomp}
and \cref{lem:R_bound} to obtain the assertion.
\end{proof}

%%%%%%%%%%%%%%%%%%%%%%%%%%%%%%%%%%%%%%%%
We shall prepare the special case of \cref{thm:WuYu_improved}
to use in the next section.

%%%%%%%%%%%%%%%%%%%%%%%%%%%%%%%%%%%%%%%%
\begin{corollary}
\label{cor:YuWu}
For $x\ge1$, $a,q\in\mathbb{Z}$ with $1\le q\le x$, we have
\begin{align}
S(x,q,a)
&=
x\sum_{\substack{n=1\\n\equiv a\ \mod{q}}}^{\infty}\frac{1}{n(n+1)}
+
O\biggl(
\biggl(\frac{x}{q}\biggr)^{\frac{1}{3}}
\biggr),\\
S^{\ast}(x,q,a)
&=
\frac{2\sqrt{x}}{q}
+
O\biggl(
\biggl(\frac{x}{q}\biggr)^{\frac{1}{3}}
\biggr),
\end{align}
where the implicit constants are absolute.
\end{corollary}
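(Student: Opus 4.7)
The plan is to deduce the corollary directly from \cref{thm:WuYu_improved} by specializing to a concrete exponent pair. The task is to find an exponent pair $(k,\ell)$ for which $\frac{k+\ell}{2k+2}=\frac{1}{3}$, and then to absorb the stray $+1$ in the error term of \cref{thm:WuYu_improved} using the hypothesis $1\le q\le x$.

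First I would solve the equation $\frac{k+\ell}{2k+2}=\frac{1}{3}$, which rearranges to $k+3\ell=2$. The classical Weyl pair $(k,\ell)=(\tfrac{1}{2},\tfrac{1}{2})$ satisfies this relation and is well known to be an exponent pair, e.g.\ as the $B$-process applied to the trivial exponent pair $(0,1)$ in the sense of Chapter~3 of \cite{GrahamKolesnik}. Plugging $(k,\ell)=(\tfrac{1}{2},\tfrac{1}{2})$ into \cref{thm:WuYu_improved} gives the error term
\[
O\biggl(\biggl(\frac{x}{q}\biggr)^{1/3}+1\biggr).
\]

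Next, I would use the hypothesis $1\le q\le x$, which ensures $x/q\ge 1$ and hence $(x/q)^{1/3}\ge 1$. Therefore the $+1$ in the error is absorbed into $(x/q)^{1/3}$, producing the claimed bounds for both $S(x,q,a)$ and $S^{\ast}(x,q,a)$. The implicit constants from \cref{thm:WuYu_improved} depend only on $(k,\ell)$, and since we have fixed $(k,\ell)=(\tfrac{1}{2},\tfrac{1}{2})$, the resulting implicit constants are absolute, as required.

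There is no real obstacle here: the statement is a direct specialization of the main theorem, and the only thing to check beyond the choice of exponent pair is the trivial absorption of the additive constant. If one wished to avoid the claim that $(\tfrac12,\tfrac12)$ is an exponent pair, one could alternatively invoke any $(k,\ell)$ with $k+3\ell\le 2$, or verify the Weyl pair directly via the standard $B$-process computation in \cite[Chapter~3]{GrahamKolesnik}.
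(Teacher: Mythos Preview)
Your proposal is correct and matches the paper's own proof: both apply \cref{thm:WuYu_improved} with the exponent pair $(k,\ell)=(\tfrac12,\tfrac12)$ and then absorb the $+1$ using $q\le x$. The only additional remark in the paper is a parenthetical noting that, inside the proof of \cref{thm:WuYu_improved} (specifically \cref{lem:R_bound}), the case $k=\ell$ is singular and is handled by passing to the strictly better pair $(\tfrac{11}{30},\tfrac{16}{30})$; since you are invoking the theorem as a black box, this technicality does not affect your argument.
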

%%%%%%%%%%%%%%%%%%%%%%%%%%%%%%%%%%%%%%%%
\begin{proof}
This follows by \cref{thm:WuYu_improved} with $(k,\ell)=(\frac{1}{2},\frac{1}{2})$.
(As it can be seen from the proof of \cref{thm:WuYu_improved},
the actually used exponent pair is $(k,\ell)=(\frac{11}{30},\frac{16}{30})$
to avoid the singular case $k=\ell$.)
\end{proof}

%%%%%%%%%%%%%%%%%%%%%%%%%%%%%%%%%%%%%%%%
\section{Primitive lattice points and the \texorpdfstring{$[x/n]$}{[x/n]}-sequence}
\label{sec:primitive_lattice_points}
In this section, we shall prove \cref{thm:primitive_Sx} and \cref{thm:primitive_xn}.

%%%%%%%%%%%%%%%%%%%%%%%%%%%%%%%%%%%%%%%%%
%To prove \cref{thm:primitive_xn}, we recall some results
%on the distribution of smooth and rough numbers.
%
%%%%%%%%%%%%%%%%%%%%%%%%%%%%%%%%%%%%%%%%%
%\begin{lemma}
%\label{lem:deBruijn}
%For $x\ge y\ge 1$, we have
%\[
%\Psi(x,y)
%\ll
%x\exp(-u\log u)
%\]
%provided $y\ge\log x$,
%where $u\coloneqq\frac{\log x}{\log y}$
%and the implicit constant is absolute.
%\end{lemma}
%%%%%%%%%%%%%%%%%%%%%%%%%%%%%%%%%%%%%%%%%
%\begin{proof}
%When $u\ge\log\log x$, we have
%\[
%\log y
%\le
%\log x
%\ll
%\exp(-\tfrac{1}{2}u\log\log u)
%\]
%and so the assertion follows from Theorem~7.6 of \cite[p.~207]{MV:text}.
%Indeed, we need only the range $u\ge\log\log x$ in this paper.
%If one needs the result in the remaining range $1\le u\le\log\log x$,
%we can use Theorem~1 of \cite{Hildebrand:smooth_identity}
%and Corollary~2.3 of \cite{HildebrandTenenbaum:survey}
%to obtain the assertion.
%\end{proof}
%
%%%%%%%%%%%%%%%%%%%%%%%%%%%%%%%%%%%%%%%%%
%\begin{lemma}
%\label{lem:Buchstab}
%For $x\ge z\ge2$, we have
%\[
%\Phi(x,z)
%\ll
%\frac{x}{\log z}
%\]
%where the implicit constant is absolute.
%\end{lemma}
%%%%%%%%%%%%%%%%%%%%%%%%%%%%%%%%%%%%%%%%%
%\begin{proof}
%When $z>z_{0}\coloneqq x^{\frac{1}{2}}(\log x)^{-\frac{1}{2}}$,
%we may use $\Phi(x,z)\le\Phi(x,z_{0})$
%to assume $z\le z_{0}$.
%We can then use Theorem~3.4 of \cite{HR:text}
%with $y=x$, $k=\ell=1$ and the set $\mathfrak{P}$ of all primes.
%\end{proof}

%%%%%%%%%%%%%%%%%%%%%%%%%%%%%%%%%%%%%%%%
\begin{proof}[Proof of \cref{thm:primitive_Sx}]
Since $\ell\le1$, we have $\frac{k+\ell}{k+1}\le1$.
When $\frac{k+\ell}{k+1}=1$, then the error term is of the size $O(x)$
and so the assertion is trivial since the left-hand side can be bounded by $(\#\mathscr{S}(x))^{2}\ll x$.
We may thus assume $\frac{k+\ell}{k+1}<1$.
By using the M\"{o}bius function, we have
\begin{align}
S
\coloneqq
\sum_{\substack{
m,n\in\mathscr{S}(x)\\
(m,n)=1
}}
1
=
\sum_{\substack{
m,n\in\mathscr{S}(x)
}}
\sum_{d\mid(m,n)}\mu(d)
=
\sum_{d\le x}\mu(d)
\biggl(
\sum_{\substack{
n\in\mathscr{S}(x)\\
d\mid n
}}
1
\biggr)^{2}
=
\sum_{d\le x}\mu(d)
S^{\ast}(x,d,0)^{2}.
\end{align}
Take a parameter $U\in[1,x]$ and decompose the above sum as
\[
S
=
\sum_{d\le U}
+
\sum_{d>U}
\eqqcolon
S_{1}+S_{2},
\quad\text{say}.
\]
For $S_{1}$, since we are assuming $0<\frac{k+\ell}{k+1}<1$, we can use \cref{thm:WuYu_improved} to get
\begin{equation}
\label{thm:primitive_Sx:S1}
\begin{aligned}
S_{1}
&=
\sum_{d\le U}
\mu(d)
\biggl(\frac{4x}{d^{2}}
+
O\biggl(\frac{x^{\frac{1}{2}+\frac{k+\ell}{2k+2}}}{d^{1+\frac{k+\ell}{2k+2}}}
+
\biggl(\frac{x}{d}\biggr)^{\frac{k+\ell}{k+1}}
\biggr)
\biggr)\\
&=
4x\sum_{d\le U}
\frac{\mu(d)}{d^{2}}
+
O\bigl(
x^{\frac{1}{2}+\frac{k+\ell}{2k+2}}
+
x^{\frac{k+\ell}{k+1}}U^{1-\frac{k+\ell}{k+1}}
\bigr)\\
&=
\frac{4}{\zeta(2)}x
+
O\bigl(
xU^{-1}
+
x^{\frac{1}{2}+\frac{k+\ell}{2k+2}}
+
x^{\frac{k+\ell}{k+1}}U^{1-\frac{k+\ell}{k+1}}
\bigr).
\end{aligned}
\end{equation}
For the sum $S_{2}$, we apply \cref{thm:WuYu_improved}
to only one factor $S(x,d,0)$ to get
\begin{align}
\label{thm:primitive_Sx:S2_decomp}
S_{2}
\ll
\sum_{U<d\le x}
\biggl(\frac{x^{\frac{1}{2}}}{d}+\biggl(\frac{x}{d}\biggr)^{\frac{k+\ell}{2k+2}}\biggr)
\sum_{
\substack{n\in\mathscr{S}(x)\\
d\mid n
}}
1
\ll
\biggl(\frac{x^{\frac{1}{2}}}{U}+\biggl(\frac{x}{U}\biggr)^{\frac{k+\ell}{2k+2}}\biggr)
\sum_{U<d\le x}
\sum_{
\substack{n\in\mathscr{S}(x)\\
d\mid n
}}
1.
\end{align}
For this sum, by swapping the summation and using the bound
\[
\tau(n)\ll_{\epsilon} n^{\epsilon}
\]
for any $\epsilon>0$, where the implicit constant depends on $\epsilon$, we get
\begin{align}
\sum_{U<d\le x}
\sum_{
\substack{n\in\mathscr{S}(x)\\
d\mid n
}}
1
=
\sum_{n\in\mathscr{S}(x)}
\sum_{\substack{
d\mid n\\
U<d\le x
}}
1
\le
\sum_{n\in\mathscr{S}(x)}
\tau(n)
\ll_{\epsilon}
x^{\epsilon}\#\mathscr{S}(x)
\ll
x^{\frac{1}{2}+\epsilon}
\end{align}
for any $\epsilon>0$, where the implicit constant depends on $\epsilon$.
On inserting this into \cref{thm:primitive_Sx:S2_decomp}, we get
\begin{equation}
\label{thm:primitive_Sx:S2}
S_{2}
\ll_{\epsilon}
x^{1+\epsilon}U^{-1}
+
x^{\frac{1}{2}+\frac{k+\ell}{2k+2}+\epsilon}
U^{-\frac{k+\ell}{2k+2}}.
\end{equation}
By combining \cref{thm:primitive_Sx:S1} and \cref{thm:primitive_Sx:S2}, we obtain
\[
S
=
\frac{4}{\zeta(2)}x
+
O_{\epsilon}\bigl(
x^{\frac{1}{2}+\frac{k+\ell}{2k+2}}
+
x^{\frac{k+\ell}{k+1}}U^{1-\frac{k+\ell}{k+1}}
+
x^{1+\epsilon}U^{-1}
+
x^{\frac{1}{2}+\frac{k+\ell}{2k+2}+\epsilon}
U^{-\frac{k+\ell}{2k+2}}
\bigr).
\]
By taking $U=x^{\frac{1}{2}}\in[1,x]$ and noting $\frac{k+\ell}{k+1}<1$, we have
\[
S
=
\frac{4}{\zeta(2)}x
+
O_{\epsilon}\bigl(
x^{\frac{1}{2}+\frac{k+\ell}{2k+2}}
+
x^{\frac{1}{2}+\frac{k+\ell}{4k+4}+\epsilon}
\bigr).
\]
Finally, by taking
\[
\epsilon=\frac{k+\ell}{4k+4}>0,
\]
we obtain the assertion.
\end{proof}

%%%%%%%%%%%%%%%%%%%%%%%%%%%%%%%%%%%%%%%%
The proof of \cref{thm:primitive_Sx} is more straightforward.

%%%%%%%%%%%%%%%%%%%%%%%%%%%%%%%%%%%%%%%%
\begin{proof}[Proof of \cref{thm:primitive_xn}]
Since $(k,\ell)=(\frac{1}{2},\frac{1}{2})$ is an exponent pair
and gives a better result than the exponent pair $(k,\ell)=(0,1)$,
we may assume $(k,\ell)\neq(0,1)$.
We have
\begin{align}
\sum_{\substack{m,n\le x\\([x/m],[x/n])=1}}1
&=
\sum_{m,n\le x}\sum_{d\mid([x/m],[x/n])}\mu(d)
=
\sum_{d\le x}\mu(d)S(x,d,0)^{2}.
\end{align}
By using \cref{thm:WuYu_improved} and
\[
\sum_{\substack{n=1\\d\mid n}}^{\infty}\frac{1}{n(n+1)}
\le
\frac{1}{d^{2}}
\sum_{n=1}^{\infty}\frac{1}{n^{2}}
\ll
\frac{1}{d^{2}}
\]
we have
\[
S(x,d,0)^{2}
=
x^{2}\biggl(\sum_{\substack{n=1\\d\mid n}}^{\infty}\frac{1}{n(n+1)}\biggr)^{2}
+
O\biggl(
\frac{x}{d^{2}}\biggl(\frac{x}{d}\biggr)^{\frac{k+\ell}{2k+2}}
+
\biggl(\frac{x}{d}\biggr)^{\frac{k+\ell}{k+1}}
\biggr).
\]
Since $(k,\ell)\neq(0,1)$ implies $\frac{k+\ell}{k+1}<1$, we have
\begin{align}
\sum_{\substack{m,n\le x\\([x/m],[x/n])=1}}1
=
x^{2}\sum_{\substack{d\le x}}\mu(d)\sum_{\substack{m,n=1\\d\mid(m,n)}}^{\infty}\frac{1}{mn(m+1)(n+1)}
+
O(x^{1+\frac{k+\ell}{2k+2}}).
\end{align}
We now have
\begin{align}
\sum_{\substack{d\le x}}\mu(d)\sum_{\substack{m,n=1\\d\mid(m,n)}}^{\infty}\frac{1}{mn(m+1)(n+1)}
&=
\sum_{m,n=1}^{\infty}\frac{1}{mn(m+1)(n+1)}
\sum_{\substack{d\mid (m,n)\\d\le x}}\mu(d)\\
&=
\mathfrak{S}
+
O\biggl(\sum_{\substack{m,n=1\\(m,n)>x}}^{\infty}\frac{\tau((m,n))}{mn(m+1)(n+1)}\biggr).
\end{align}
The last error term is bounded as
\begin{align}
\sum_{\substack{m,n=1\\(m,n)>x}}^{\infty}\frac{\tau((m,n))}{mn(m+1)(n+1)}
&\ll
\sum_{\substack{m,n=1\\(m,n)>x}}^{\infty}\frac{\tau((m,n))}{m^{2}n^{2}}
\ll
\sum_{d>x}\frac{\tau(d)}{d^{4}}
\biggl(\sum_{m=1}^{\infty}\frac{1}{m^{2}}\biggr)^{2}
\ll
\frac{\log x}{x^{3}}.
\end{align}
We thus obtain the theorem.
\end{proof}

%%%%%%%%%%%%%%%%%%%%%%%%%%%%%%%%%%%%%%%%
\section{Mean value of small arithmetic functions over the \texorpdfstring{$[x/p]$}{[x/p]}-sequence}
\label{sec:xp_small}
In this section, we deal with the multiplicative Titchmarsh divisor theorem
with small arithmetic function, i.e.\ \cref{thm:MTD_easy_with_vonMangoldt}
and \cref{thm:MTD_easy_unweighted}.

%%%%%%%%%%%%%%%%%%%%%%%%%%%%%%%%%%%%%%%%
The proof of \cref{thm:MTD_easy_unweighted} is straightforward
and essentially follows \cite[Subsection~3.1]{Wu_general}.

%%%%%%%%%%%%%%%%%%%%%%%%%%%%%%%%%%%%%%%%
\begin{proof}[Proof of \cref{thm:MTD_easy_with_vonMangoldt}]
Write
\[
S
\coloneqq
\sum_{n\le x}\Lambda(n)a\biggl(\biggl[\frac{x}{n}\biggr]\biggr)
=
\sum_{n\le x}a(n)\sum_{\frac{x}{n+1}<m\le\frac{x}{n}}\Lambda(m)
=
\sum_{n\le x^{\frac{1}{2}}}
+
\sum_{x^{\frac{1}{2}}<n\le x}
\eqqcolon
{\sum}_{1}+{\sum}_{2},
\quad\text{say}.
\]
By the prime number theorem and \cref{thm:MTD_easy_with_vonMangoldt:a_mean} in \cref{thm:MTD_easy_with_vonMangoldt},
we have
\begin{align}
{\sum}_{1}
&=
x\sum_{n\le x^{\frac{1}{2}}}\frac{a(n)}{n(n+1)}
+
O\biggl(
x\sum_{n\le x^{\frac{1}{2}}}\frac{|a(n)|}{n}\exp\biggl(-2c\sqrt{\log\frac{x}{n}}\biggr)
\biggr)\\
&=
x\sum_{n=1}^{\infty}\frac{a(n)}{n(n+1)}
+
O(x\exp(-c\sqrt{\log x}))
\end{align}
with some absolute constant $c>0$.
On the other hand, by \cref{thm:MTD_easy_with_vonMangoldt:sup} in \cref{thm:MTD_easy_with_vonMangoldt}, we have
\begin{align}
{\sum}_{2}
&\ll
x^{\theta}
\sum_{x^{\frac{1}{2}}<n\le x}\sum_{\frac{x}{n+1}<m\le\frac{x}{n}}\Lambda(m)
=
x^{\theta}
\sum_{m\le x^{\frac{1}{2}}}\Lambda(m)\sum_{\frac{x}{m}-1<n\le\frac{x}{m}}1
\ll
x^{\theta+\frac{1}{2}}
\ll
x\exp(-c\sqrt{\log x}).
\end{align}
This completes the proof.
\end{proof}

%%%%%%%%%%%%%%%%%%%%%%%%%%%%%%%%%%%%%%%%
The proof of \cref{thm:MTD_easy_unweighted} is also easy
except the evaluation of the main term is somehow involved.

%%%%%%%%%%%%%%%%%%%%%%%%%%%%%%%%%%%%%%%%
\begin{proof}[Proof of \cref{thm:MTD_easy_unweighted}]
We may assume $x$ is sufficiently large.
We begin with
\begin{align}
\sum_{p\le x}a\biggl(\biggl[\frac{x}{p}\biggr]\biggr)
=
\sum_{n\le x}a(n)\sum_{\substack{\frac{x}{n+1}<p\le\frac{x}{n}}}1
=
\sum_{n\le x^{\frac{1}{2}}}
+
\sum_{n> x^{\frac{1}{2}}}
=
{\sum}_{1}+{\sum}_{2},\quad\text{say}.
\end{align}
For the sum ${\sum}_{1}$, by the prime number theorem and \cref{thm:MTD_easy_with_vonMangoldt:a_mean}, we have
\begin{equation}
\label{thm:MTD_easy_unweighted:Sigma1_M}
\begin{aligned}
{\sum}_{1}
&=
\sum_{n\le x^{\frac{1}{2}}}
a(n)\int_{\frac{x}{n+1}}^{\frac{x}{n}}\frac{du}{\log u}
+
O\biggl(
x\sum_{n\le x^{\frac{1}{2}}}\frac{|a(n)|}{n}\exp\biggl(-2c\sqrt{\log\frac{x}{n}}\biggr)
\biggr)\\
&=
\sum_{n\le x^{\frac{1}{2}}}
a(n)\int_{\frac{x}{n+1}}^{\frac{x}{n}}\frac{du}{\log u}
+
O(x\exp(-c\sqrt{\log x}))\\
&=
M
+
O(x\exp(-c\sqrt{\log x})),\quad\text{say}.
\end{aligned}
\end{equation}
Write $y\coloneqq x^{\frac{1}{2}}$.
By swapping the summation and integration and using \cref{thm:MTD_easy_with_vonMangoldt:sup}, we get
\begin{align}
M
&=
\int_{\frac{x}{y+1}}^{x}
\biggl(\sum_{\frac{x}{u}-1<n\le\min(y,\frac{x}{u})}a(n)\biggr)
\frac{du}{\log u}
=
\int_{\frac{x}{y}}^{x}
\biggl(\sum_{\frac{x}{u}-1<n\le\frac{x}{u}}a(n)\biggr)
\frac{du}{\log u}
+O\biggl(
x^{\frac{\theta}{2}}
\int_{\frac{x}{y+1}}^{\frac{x}{y}}\frac{du}{\log u}
\biggr).
\end{align}
We can bound the error term as
\[
x^{\frac{\theta}{2}}
\int_{\frac{x}{y+1}}^{\frac{x}{y}}\frac{du}{\log u}
\ll
\frac{x^{\frac{\theta}{2}}}{\log x}\cdot\frac{x}{y^{2}}
\ll
\frac{x^{\frac{\theta}{2}}}{\log x}
\ll
x^{\frac{1}{4}}
\]
and so
\[
M
=
\int_{\frac{x}{y}}^{x}
\biggl(\sum_{\frac{x}{u}-1<n\le\frac{x}{u}}a(n)\biggr)
\frac{du}{\log u}
+
O(x^{\frac{1}{4}}).
\]
By changing variables via $u\leadsto x/u$, we have
\begin{align}
\int_{\frac{x}{y}}^{x}
\biggl(\sum_{\frac{x}{u}-1<n\le\frac{x}{u}}a(n)\biggr)
\frac{du}{\log u}
&=
x
\int_{1}^{y}
\biggl(\sum_{u-1<n\le u}a(n)\biggr)
\frac{du}{u^2\log\frac{x}{u}}\\
&=
\frac{x}{\log x}
\int_{1}^{y}\frac{a([u])}{u^2}
\frac{du}{1-\frac{\log u}{\log x}}\\
&=
\frac{x}{\log x}
\sum_{k=0}^{K}\biggl(\frac{1}{\log x}\biggr)^{k}
\int_{1}^{y}\frac{a([u])(\log u)^{k}}{u^2}du\\
&\hspace{10mm}+
\frac{x}{\log x}\biggl(\frac{1}{\log x}\biggr)^{K+1}
\int_{1}^{y}\frac{a([u])(\log u)^{K+1}}{u^2}\frac{du}{1-\frac{\log u}{\log x}}.
\end{align}
The assumption \cref{thm:MTD_easy_with_vonMangoldt:sup} implies the bounds
\begin{align}
\int_{y}^{\infty}\frac{a([u])(\log u)^{k}}{u^2}du
\ll
\int_{y}^{\infty}\frac{(\log u)^{k}}{u^{\frac{3}{2}}}du
\ll_{k}
\frac{(\log x)^{k}}{x^{\frac{1}{4}}}
\end{align}
and
\begin{align}
\int_{1}^{y}\frac{a([u])(\log u)^{K+1}}{u^2}\frac{du}{1-\frac{\log u}{\log x}}
\ll
\int_{1}^{y}\frac{(\log u)^{K+1}}{u^{\frac{3}{2}}}du
\ll_{K}
1.
\end{align}
Therefore, we have
\[
M
=
\frac{x}{\log x}
\sum_{k=0}^{K}\biggl(\frac{1}{\log x}\biggr)^{k}
\int_{1}^{\infty}\frac{a([u])(\log u)^{k}}{u^2}du
+
O_{K}\biggl(\frac{x}{\log x}\biggl(\frac{1}{\log x}\biggr)^{K+1}\biggr).
\]
By substituting this formula into \cref{thm:MTD_easy_unweighted:Sigma1_M},
we obtain
\begin{align}
{\sum}_{1}
=
\frac{x}{\log x}
\sum_{k=0}^{K}\biggl(\frac{1}{\log x}\biggr)^{k}
\int_{1}^{\infty}\frac{a([u])(\log u)^{k}}{u^2}du
+
O_{K}\biggl(\frac{x}{\log x}\biggl(\frac{1}{\log x}\biggr)^{K+1}\biggr).
\end{align}
For the sum ${\sum}_{2}$, the assumption \cref{thm:MTD_easy_with_vonMangoldt:sup} yields
\begin{align}
{\sum}_{2}
\ll
x^{\theta}
\sum_{x^{\frac{1}{2}}<n\le x}\sum_{\substack{\frac{x}{n+1}<p\le\frac{x}{n}}}1
\le
x^{\theta}
\sum_{p\le x^{\frac{1}{2}}}\sum_{x/p-1<n\le x/p}1
\ll
x^{\frac{1}{2}+\theta}
\ll_{K}
\frac{x}{\log x}\biggl(\frac{1}{\log x}\biggr)^{K+1},
\end{align}
which is enough for our purpose.
By combining the above results, we obtain the result.
\end{proof}

%%%%%%%%%%%%%%%%%%%%%%%%%%%%%%%%%%%%%%%%%
%\begin{remark}
%For the first coefficient, we have
%\[
%\int_{1}^{\infty}\frac{\tau([u])}{u^2}du
%=
%\sum_{n=1}^{\infty}\tau(n)\int_{n}^{n+1}\frac{du}{u^{2}}
%=
%\sum_{n=1}^{\infty}\frac{\tau(n)}{n(n+1)}
%=
%1.88\cdots
%\]
%\end{remark}

%%%%%%%%%%%%%%%%%%%%%%%%%%%%%%%%%%%%%%%%
\section{Multiplicative Titchmarsh divisor problem with \texorpdfstring{$\phi$}{phi}}
\label{sec:xp_phi}
In this section, we consider the multiplicative Titchmarsh divisor problem
with the Euler totient $\phi(n)$.

%%%%%%%%%%%%%%%%%%%%%%%%%%%%%%%%%%%%%%%%
We first recall bounds for exponential sums with arithmetic function
of the Korobov--Vinogradov--Walfisz type:

%%%%%%%%%%%%%%%%%%%%%%%%%%%%%%%%%%%%%%%%
\begin{lemma}
\label{lem:expsum_KVW}
Let $v(n)=\mu(n),\Lambda(n)$ or $\mathbbm{1}_{n:\textup{prime}}$.
For $A\ge1$, $P,P',Q\ge4$ with $P\le P'\le 2P$ and $\beta\in\mathbb{R}$,
there exists $B=B(A)\ge1$ such that
\[
\sum_{P<n\le P'}v(n)\psi\biggl(\frac{Q}{n}+\eta\biggr)
\ll
P(\log Q)^{-A}
\]
provided
\[
\exp(B(\log Q)^{\frac{2}{3}}(\log\log Q)^{\frac{1}{3}})
\le
P
\le
Q(\log Q)^{-B},
\]
where the implicit constant depends only on $A$.
\end{lemma}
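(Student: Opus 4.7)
The plan is to reduce the claim to the Korobov--Vinogradov--Walfisz-type exponential sum estimate already available in the literature. First, I would apply Vaaler's approximation (Lemma \ref{lem:Vaaler}) in the weighted form of Lemma \ref{lem:Erdos_Turan}: multiplying $\psi(Q/n+\eta)-\psi_H(Q/n+\eta)$ by $v(n)$, bounding $|v(n)|$ trivially by $\log P$ on the diagonal Vaaler error term, and factoring $e(h\eta)$ out of the main Fourier piece yield, for any $H\ge 1$,
\[
\biggl|\sum_{P<n\le P'}v(n)\psi\bigl(Q/n+\eta\bigr)\biggr|
\ll
\sum_{1\le h\le H}\frac{1}{h}\biggl|\sum_{P<n\le P'}v(n)e(hQ/n)\biggr|
+
\frac{P\log P}{H}.
\]
This converts the problem into bounding the exponential sum with $v(n)$ and the phase $hQ/n$, uniformly in $h\in[1,H]$.

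Next, I would invoke the Korobov--Vinogradov--Walfisz-type bound for $\sum_{P<n\le P'}v(n)e(hQ/n)$. For $v=\Lambda$ this is the estimate of H.~Q. Liu \cite{HQLiu}, spelled out for this very phase by the second author in \cite{Yuta:Walfisz}: for every $A'\ge 1$ there is $B'=B'(A')$ such that
\[
\sum_{P<n\le P'}\Lambda(n)e(hQ/n)
\ll
P(\log (hQ))^{-A'}
\]
whenever $\exp(B'(\log (hQ))^{2/3}(\log\log (hQ))^{1/3})\le P\le hQ(\log (hQ))^{-B'}$. The case $v=\mu$ follows from the same Vaughan-identity plus Vinogradov mean value machinery, and the prime-indicator case follows from the $\Lambda$ bound by partial summation after stripping off prime powers (whose contribution is trivially $O(P^{1/2}\log P)$).

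Finally, I would choose $H=(\log Q)^{A+2}$ and $A'=A+3$. Since $1\le h\le H$ with $H$ a fixed power of $\log Q$, we have $\log (hQ)\asymp\log Q$ and $\log\log(hQ)\asymp\log\log Q$ uniformly, so the range hypothesis on $P$ required by the input bound reduces to the hypothesis stated in the lemma after enlarging the constant $B$. Summing $1/h$ over $h\le H$ costs only $\log H\ll\log\log Q$, which is absorbed into the extra powers of $\log Q$ available from $A'>A$; the residual error $P\log P/H$ is likewise $\ll P(\log Q)^{-A}$.

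The main obstacle is not in this assembly but in the underlying Korobov--Vinogradov--Walfisz estimate itself, which relies on Vaughan's identity to split $v(n)$ into Type I and Type II sums and on the Vinogradov mean value theorem to handle the Type II sums with the phase $hQ/n$. Since this deep input is already established in \cite{HQLiu,Yuta:Walfisz}, the present lemma is essentially a packaging step; the only care needed here is to track the $h$-dependence uniformly over $h\le H$ and to perform the minor transfer from $\Lambda$ to $\mu$ and to the prime indicator.
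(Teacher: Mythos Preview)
Your proposal is correct and follows essentially the same route as the paper's own (very brief) proof: reduce via Vaaler's approximation/\cref{lem:Erdos_Turan} to weighted exponential sums, factor out $e(h\eta)$ to remove the shift, and then invoke the Korobov--Vinogradov--Walfisz estimates of \cite{HQLiu,Yuta:Walfisz}. Your sketch is in fact more detailed than the paper's, which simply cites Lemmas~6,~7,~12 of \cite{Yuta:Walfisz} together with \cref{lem:Erdos_Turan} and notes that the shift $\eta$ disappears after the latter.
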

%%%%%%%%%%%%%%%%%%%%%%%%%%%%%%%%%%%%%%%%
\begin{proof}
This can be proven by the exponential sum estimate given as Lemma~6, 7, 12 of \cite{Yuta:Walfisz}
combined with \cref{lem:Erdos_Turan}.
Note that the effect of the shift $\eta$ can be removed after the application of \cref{lem:Erdos_Turan}.
The case $a(n)=\mu(n)$ is essentially due to H. Q. Liu~\cite{HQLiu}.
\end{proof}

%%%%%%%%%%%%%%%%%%%%%%%%%%%%%%%%%%%%%%%%
We also recall the usual expansion of the error term
of the summatory function of the Euler totient function:

%%%%%%%%%%%%%%%%%%%%%%%%%%%%%%%%%%%%%%%%
\begin{lemma}
\label{lem:Euler_phi_remainder}
For $x\ge 4$, $\delta>0$ and $u\in[x^{\delta},x]$, we have
\[
\sum_{n\le u}\phi(n)
=
\frac{1}{2\zeta(2)}u^{2}+E(u)+O(ue^{-c\sqrt{\log x}})
\]
with some constant $c=c(\delta)>0$ depending on $\delta$ and
\[
E(u)
=
-u\sum_{d\le\frac{u}{y}}
\frac{\mu(d)}{d}
\psi\biggl(\frac{u}{d}\biggr),\quad
y\coloneqq\exp(c\sqrt{\log x})
\]
and the implicit constant depends only on $\delta$.
\end{lemma}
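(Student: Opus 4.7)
The plan is to apply Dirichlet's hyperbola method to the identity $\sum_{n\le u}\phi(n)=\sum_{ab\le u}\mu(a)b$, splitting the outer sum at $a=u/y$. Writing $M(X)\coloneqq\sum_{a\le X}\mu(a)$, the key ingredient from outside the paper will be the classical Mertens--Walfisz estimate $M(X)\ll X\exp(-c_0\sqrt{\log X})$, obtained from the prime number theorem with the de la Vall\'ee Poussin error term.

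For the range $a\le u/y$, I would use the elementary identity $\sum_{b\le u/a}b=\tfrac{u^{2}}{2a^{2}}-\tfrac{u}{a}\psi(u/a)+O(1)$, giving
\[
\sum_{a\le u/y}\mu(a)\sum_{b\le u/a}b
=
\frac{u^{2}}{2}\sum_{a\le u/y}\frac{\mu(a)}{a^{2}}
+E(u)+O(u/y).
\]
Partial summation from the Mertens--Walfisz estimate yields $\sum_{a>u/y}\mu(a)/a^{2}\ll(y/u)\exp(-c_0\sqrt{\log(u/y)})$, so completing the Dirichlet series to $\zeta(2)^{-1}$ produces the main term $\tfrac{u^{2}}{2\zeta(2)}$ with an error of size $uy\exp(-c_0\sqrt{\log(u/y)})$.

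For the complementary range $u/y<a\le u$, I would exploit that the inner sum forces $b\le u/a<y$. Swapping summations gives
\[
\sum_{u/y<a\le u}\mu(a)\sum_{b\le u/a}b
=
\sum_{b<y}b\bigl(M(u/b)-M(u/y)\bigr),
\]
and inserting $M(X)\ll X\exp(-c_0\sqrt{\log X})$ bounds this contribution by $\ll uy\exp(-c_0\sqrt{\log(u/y)})$ as well.

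The remaining task is to verify that the total error can be absorbed into $O(u\exp(-c\sqrt{\log x}))$. Here the hypothesis $u\ge x^{\delta}$ is essential: it guarantees $\log(u/y)\ge\delta\log x-c\sqrt{\log x}\ge\tfrac{\delta}{2}\log x$ for $x$ large, so the exponential factor becomes $\exp(-c_0\sqrt{\delta\log x/2})$, while the prefactor $y=\exp(c\sqrt{\log x})$ is controlled. Choosing the constant $c=c(\delta)>0$ in the definition of $y$ small enough in terms of $\delta$ and $c_0$, both error contributions collapse to $O(u\exp(-c\sqrt{\log x}))$; the $O(u/y)$ term is trivially of this shape. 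The main (and essentially only) delicacy is this simultaneous calibration of the two roles of $c$: the saving constant in the Mertens--Walfisz estimate has to beat the amplification from the truncation parameter $y$, which is exactly why $c$ must depend on $\delta$.
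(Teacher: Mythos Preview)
Your proposal is correct and follows essentially the same route as the reference the paper cites (Lemma~2.2 of Zhai): apply the hyperbola method to $\phi=\mu\ast\mathrm{id}$ with the cut at $a=u/y$, extract the main term and $E(u)$ from the small-$a$ range, and control both the tail $\sum_{a>u/y}\mu(a)/a^{2}$ and the large-$a$ range via the Mertens--Walfisz bound $M(X)\ll X\exp(-c_{0}\sqrt{\log X})$. Your calibration of $c=c(\delta)$ so that the gain $\exp(-c_{0}\sqrt{\delta/2}\sqrt{\log x})$ absorbs the factor $y=\exp(c\sqrt{\log x})$ is exactly the point, and the bounded-$x$ case is handled by adjusting the implicit constant.
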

%%%%%%%%%%%%%%%%%%%%%%%%%%%%%%%%%%%%%%%%
\begin{proof}
This can be proven by the same method as the proof of Lemma~2.2 of \cite{Zhai}.
%(Give details?)
\end{proof}

%%%%%%%%%%%%%%%%%%%%%%%%%%%%%%%%%%%%%%%%
To prove \cref{thm:phi_xn_Titchmarsh_vonMangoldt},
we need Huxley's prime number theorem in short intervals:

%%%%%%%%%%%%%%%%%%%%%%%%%%%%%%%%%%%%%%%%
\begin{lemma}
\label{lem:Huxley_PNT}
For real numbers $X,H,\epsilon,A$ with $4\le H\le X$, $\epsilon>0$ and $A\ge1$, we have
\[
\sum_{X<n\le X+H}\Lambda(n)
=
H+O(H(\log X)^{-A})
\]
provided $X^{\frac{7}{12}+\epsilon}\le H\le X$,
where the implicit constant depends on $\epsilon$ and $A$.
\end{lemma}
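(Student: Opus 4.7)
The plan is to prove this by invoking Huxley's celebrated result on primes in short intervals, which is essentially the statement being made. First, I would reduce the assertion to the usual form $\psi(X+H)-\psi(X)=H+O(H(\log X)^{-A})$ where $\psi(u)=\sum_{n\le u}\Lambda(n)$, since the sum over $X<n\le X+H$ differs from the difference $\psi(X+H)-\psi(X)$ only by a trivial amount.

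Next, the standard route is to apply the truncated Perron formula to express $\psi(u)$ as a contour integral of $-\zeta'(s)/\zeta(s)\cdot u^{s}/s$ along a vertical line slightly to the right of $\sigma=1$. Shifting the contour past $\sigma=1$ captures the residue $u$ from the simple pole of $\zeta(s)$ at $s=1$ and leaves a remainder expressible as a sum over non-trivial zeros of $\zeta(s)$, together with negligible contour pieces. The zero-sum is then estimated using two classical inputs: the Vinogradov--Korobov zero-free region, which bounds the contribution of zeros closest to the line $\sigma=1$ and is the source of the logarithmic saving $(\log X)^{-A}$; and Huxley's zero-density estimate $N(\sigma,T)\ll T^{(12/5)(1-\sigma)+\epsilon}$ valid for $\sigma\in[\tfrac{1}{2},1]$, which controls zeros further inside the critical strip. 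The lower bound $H\ge X^{7/12+\epsilon}$ in the hypothesis is precisely what is forced by balancing these density estimates against the short-interval factor $H^{\sigma-1}$, since $7/12=1-5/12$ and $5/12$ is the reciprocal of Huxley's density exponent $12/5$.

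The only ``obstacle'' is bookkeeping rather than substance: tracking the dependence on the parameter $A$ through the quantitative zero-free region and the density estimate. Since this work is done in the literature, the cleanest realisation of the plan is simply to cite Huxley's original paper, or a textbook treatment (for instance, Titchmarsh's book on the Riemann zeta function with Heath-Brown's appendices, or Iwaniec--Kowalski's \emph{Analytic Number Theory}), where the result appears essentially in the form stated.
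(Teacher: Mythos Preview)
Your proposal is correct and matches the paper's approach exactly: the paper's entire proof reads ``This can be proven by the standard argument using Huxley's zero density estimate,'' and you have simply spelled out what that standard argument is (Perron formula, explicit formula over zeros, Vinogradov--Korobov zero-free region for the $(\log X)^{-A}$ saving, and Huxley's density bound $N(\sigma,T)\ll T^{(12/5)(1-\sigma)+\epsilon}$ to handle the short interval). Your explanation of why the threshold $7/12$ arises is accurate and more informative than what the paper provides.
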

%%%%%%%%%%%%%%%%%%%%%%%%%%%%%%%%%%%%%%%%
\begin{proof}
This can be proven by the standard argument
using Huxley's zero density estimate~\cite{Huxley:invent}.
\end{proof}

%%%%%%%%%%%%%%%%%%%%%%%%%%%%%%%%%%%%%%%%
\begin{proof}[Proof of \cref{thm:phi_xn_Titchmarsh_vonMangoldt}]
For a large constant $B\ge1$, let
\[
R\coloneqq\exp(B(\log x)^{\frac{2}{3}}(\log\log x)^{\frac{1}{3}}).
\]
We then decompose the sum as
\[
\sum_{n\le x}\Lambda(n)\phi\biggl(\biggl[\frac{x}{n}\biggr]\biggr)
=
\sum_{n\le R}
+
\sum_{R<n\le x^{\frac{3}{4}}}
+
\sum_{x^{\frac{3}{4}}<n\le x}
=
{\sum}_{1}+{\sum}_{2}+{\sum}_{3},\quad\text{say}.
\]

%%%%%%%%%%%%%%%%%%%%%%%%%%%%%%%%%%%%%%%%
For the sum ${\sum}_{1}$, by using $1\le\phi(n)\le n$, we just estimate as
\[
{\sum}_{1}
=
\sum_{n\le R}\Lambda(n)\phi\biggl(\biggl[\frac{x}{n}\biggr]\biggr)
\le
x\sum_{n\le R}\frac{\Lambda(n)}{n}
\ll
x\log R
\ll
x(\log x)^{\frac{2}{3}}(\log\log x)^{\frac{1}{3}}.
\]

%%%%%%%%%%%%%%%%%%%%%%%%%%%%%%%%%%%%%%%%
For the sum ${\sum}_{2}$,
we first rewrite it as
\begin{align}
{\sum}_{2}
=\sum_{R<n\le x^{\frac{3}{4}}}\Lambda(n)\phi\biggl(\biggl[\frac{x}{n}\biggr]\biggr)
&=
\sum_{R<n\le  x^{\frac{3}{4}}}\Lambda(n)
\sum_{\frac{x}{n}-1<m\le\frac{x}{n}}\phi(m)\\
&=
\sum_{R<n\le  x^{\frac{3}{4}}}\Lambda(n)
\biggl(
\sum_{m\le\frac{x}{n}}\phi(m)
-
\sum_{m\le\frac{x}{n}-1}\phi(m)
\biggr)
\end{align}
for large $x$. By using the notation of \cref{lem:Euler_phi_remainder}
and noting that
\[
\frac{1}{2\zeta(2)}\biggl(\frac{x}{n}\biggr)^{2}
-
\frac{1}{2\zeta(2)}\biggl(\frac{x}{n}-1\biggr)^{2}
=
\frac{1}{\zeta(2)}\frac{x}{n}-\frac{1}{2\zeta(2)},
\]
we have
\begin{align}
{\sum}_{2}
&=
\frac{1}{\zeta(2)}x\sum_{R<n\le x^{\frac{3}{4}}}\frac{\Lambda(n)}{n}
+E^{(0)}-E^{(1)}
+O\biggl(
\sum_{n\le x^{\frac{3}{4}}}\Lambda(n)
+
x\exp(-c\sqrt{\log x})
\sum_{n\le x^{\frac{3}{4}}}
\frac{\Lambda(n)}{n}
\biggr)\\
&=
\frac{3}{4\zeta(2)}x\log x
+E^{(0)}-E^{(1)}+O(x(\log x)^{\frac{2}{3}}(\log\log x)^{\frac{1}{3}}),
\end{align}
where
\begin{align}
E^{(\delta)}
\coloneqq
\sum_{R<n\le x^{\frac{3}{4}}}\Lambda(n)E\biggl(\frac{x}{n}-\delta\biggr)
\end{align}
for $\delta=0,1$.

%%%%%%%%%%%%%%%%%%%%%%%%%%%%%%%%%%%%%%%%
For the sum ${\sum}_{3}$, we have
\begin{align}
{\sum}_{3}
&=
\sum_{x^{\frac{3}{4}}<n\le x}\Lambda(n)\phi\biggl(\biggl[\frac{x}{n}\biggr]\biggr)
=
\sum_{m\le x^{\frac{1}{4}}}\phi(m)
\sum_{\substack{x^{\frac{3}{4}}<n\le x\\\frac{x}{m+1}<n\le\frac{x}{m}}}\Lambda(n)\\
&=
\sum_{m\le x^{\frac{1}{4}}}\phi(m)
\sum_{\frac{x}{m+1}<n\le\frac{x}{m}}\Lambda(n)
+
O\biggl(
\sum_{x^{\frac{1}{4}}-1<m\le x^{\frac{1}{4}}}
\phi(m)
\sum_{\frac{x}{m+1}<n\le\frac{x}{m}}\Lambda(n)
\biggr).
\end{align}
Since
\[
\sum_{x^{\frac{1}{4}}-1<m\le x^{\frac{1}{4}}}\phi(m)
\sum_{\frac{x}{m+1}<n\le\frac{x}{m}}\Lambda(n)
\ll
x\log x\sum_{x^{\frac{1}{4}}-1<m\le x^{\frac{1}{4}}}\frac{\phi(m)}{m^{2}}
\ll
x^{\frac{3}{4}}\log x
\ll
x,
\]
we have
\[
{\sum}_{3}
=
\sum_{m\le x^{\frac{1}{4}}}\phi(m)
\sum_{\frac{x}{m+1}<n\le\frac{x}{m}}\Lambda(n)
+
O(x).
\]
By using \cref{lem:Huxley_PNT} and noting that
$m\le x^{\frac{1}{4}}$ implies $\frac{x}{m(m+1)}\ge(\frac{x}{m+1})^{\frac{3}{5}}$
for large $x$, we have
\begin{align}
{\sum}_{3}
&=
x\sum_{m\le x^{\frac{1}{4}}}\frac{\phi(m)}{m(m+1)}
+
O\biggl(
x(\log x)^{-1}\sum_{m\le x^{\frac{1}{4}}}\frac{\phi(m)}{m^2}
+
x\biggr)\\
&=
x\sum_{m\le x^{\frac{1}{4}}}\frac{\phi(m)}{m(m+1)}
+
O(x)
=
x\sum_{m\le x^{\frac{1}{4}}}\frac{\phi(m)}{m^{2}}
+
O(x).
\end{align}
By using
\begin{align}
\sum_{n\le z}\frac{\phi(n)}{n^{2}}
&=
\sum_{n\le z}\frac{\phi(n)}{n}\int_{n}^{z}\frac{du}{u^{2}}
+
\frac{1}{z}\sum_{n\le z}\frac{\phi(n)}{n}\\
&=
\int_{1}^{z}\biggl(\sum_{n\le u}\frac{\phi(n)}{n}\biggr)\frac{du}{u^{2}}
+
O(1)
=
\frac{1}{\zeta(2)}\int_{1}^{z}\frac{du}{u}
+O(1)
=
\frac{1}{\zeta(2)}\log z+O(1),
\end{align}
we have
\[
{\sum}_{3}
=
\frac{1}{4\zeta(2)}x\log x
+
O(x).
\]

%%%%%%%%%%%%%%%%%%%%%%%%%%%%%%%%%%%%%%%%
By combining the above results, we get
\[
\sum_{n\le x}\Lambda(n)\phi\biggl(\biggl[\frac{x}{n}\biggr]\biggr)
=
\frac{1}{\zeta(2)}x\log x+E^{(0)}-E^{(1)}+O(x(\log x)^{\frac{2}{3}}(\log\log x)^{\frac{1}{3}}).
\]
Thus, it suffices to prove $E^{(0)},E^{(1)}\ll x$.

%%%%%%%%%%%%%%%%%%%%%%%%%%%%%%%%%%%%%%%%
We then estimate $E^{(\delta)}$.
By substituting the definition of $E(x)$ in \cref{lem:Euler_phi_remainder}, we have
\begin{align}
E^{(\delta)}
&=
\sum_{R<n\le x^{\frac{3}{4}}}\Lambda(n)E\biggl(\frac{x}{n}-\delta\biggr)\\
&=
-x\sum_{\substack{dn\le\frac{x}{y}\\R<n\le x^{\frac{3}{4}}}}
\frac{\mu(d)}{d}\frac{\Lambda(n)}{n}\psi\biggl(\frac{x}{dn}-\frac{\delta}{d}\biggr)\\
&\hspace{20mm}
+O\biggl(
\sum_{R<n\le x^{\frac{3}{4}}}\Lambda(n)
\sum_{d\le\frac{x/n}{y}}\frac{1}{d}
+
y\sum_{R<n\le x^{\frac{3}{4}}}\Lambda(n)
\biggr)\\
&=
-x\sum_{\substack{dn\le\frac{x}{y}\\R<n\le x^{\frac{3}{4}}}}
\frac{\mu(d)}{d}\frac{\Lambda(n)}{n}\psi\biggl(\frac{x}{dn}-\frac{\delta}{d}\biggr)
+
O(x^{\frac{3}{4}}y)
\end{align}
with $y\coloneqq\exp(c\sqrt{\log x})$.
We shall prove $E^{(\delta)}\ll x$. Let us write
\[
E^{(\delta)}
=
\sum_{d\le x^{\frac{1}{2}}}
+
\sum_{d>x^{\frac{1}{2}}}
+
O(x)
=
E_{1}^{(\delta)}+E_{2}^{(\delta)}
+
O(x).
\]
For $E_{1}^{(\delta)}$, we write
\begin{align}
E_{1}^{(\delta)}
=
-x
\sum_{d\le x^{\frac{1}{2}}}\frac{\mu(d)}{d}
\sum_{R<n\le\min(x^{\frac{3}{4}},\frac{x}{yd})}
\frac{\Lambda(n)}{n}\psi\biggl(\frac{\frac{x}{d}}{n}-\frac{\delta}{d}\biggr).
\end{align}
By \cref{lem:expsum_KVW} with noting the condition $n>R$
and using partial summation, we have
\begin{align}
E_{1}^{(\delta)}
\ll
x
\sum_{d\le x^{\frac{1}{2}}}\frac{1}{d}\biggl(\log\frac{x}{d}\biggr)^{-1}
\ll
x
(\log x)^{-1}
\sum_{d\le x^{\frac{1}{2}}}\frac{1}{d}
\ll
x.
\end{align}
On the other hand, for $E_{2}^{(\delta)}$, we have
\begin{align}
E_{2}^{(\delta)}
=
-x
\sum_{R<n\le x^{\frac{3}{4}}}
\frac{\Lambda(n)}{n}
\sum_{x^{\frac{1}{2}}<d\le\frac{x}{yn}}\frac{\mu(d)}{d}
\psi\biggl(\frac{\frac{x}{n}-\delta}{d}\biggr).
\end{align}
By \cref{lem:expsum_KVW} with noting the condition $d>x^{\frac{1}{2}}$,
since $\frac{x}{n}-\delta\gg x^{\frac{1}{4}}$ in the above sum, we have
\[
E_{2}^{(\delta)}
\ll
x
\sum_{R<n\le x^{\frac{3}{4}}}
\frac{\Lambda(n)}{n}
\biggl(\log\biggl(\frac{x}{n}-\delta\biggr)\biggr)^{-1}
\ll
x(\log x)^{-1}
\sum_{R<n\le x^{\frac{3}{4}}}
\frac{\Lambda(n)}{n}
\ll
x.
\]
By combining the above estimates,
we get $E^{(\delta)}\ll x$.
This completes the proof.
\end{proof}

%%%%%%%%%%%%%%%%%%%%%%%%%%%%%%%%%%%%%%%%
\begin{proof}[Proof of \cref{thm:phi_xn_Titchmarsh}]
For a large constant $B\ge1$, let
\[
R\coloneqq\exp(B(\log x)^{\frac{2}{3}}(\log\log x)^{\frac{1}{3}}).
\]
We then decompose the sum as
\[
\sum_{p\le x}\phi\biggl(\biggl[\frac{x}{p}\biggr]\biggr)
=
\sum_{p\le R}
+
\sum_{R<p\le x^{\frac{3}{4}}}
+
\sum_{x^{\frac{3}{4}}<p\le x}
=
{\sum}_{1}
+{\sum}_{2}
+{\sum}_{3},\quad\text{say}.
\]

%%%%%%%%%%%%%%%%%%%%%%%%%%%%%%%%%%%%%%%%
For the sum ${\sum}_{1}$, by using $1\le\phi(n)\le n$, we just estimate as
\begin{align}
0\le
{\sum}_{1}
=
\sum_{p\le R}\phi\biggl(\biggl[\frac{x}{p}\biggr]\biggr)
\le
x\sum_{p\le R}\frac{1}{p}
=
x\log\log R+O(x)
=
\frac{2}{3}x\log\log x+O(x\log\log\log x).
\end{align}

%%%%%%%%%%%%%%%%%%%%%%%%%%%%%%%%%%%%%%%%
For the sum ${\sum}_{2}$,
we first rewrite it as
\begin{align}
{\sum}_{2}
=\sum_{R<p\le x^{\frac{3}{4}}}\phi\biggl(\biggl[\frac{x}{p}\biggr]\biggr)
&=
\sum_{R<p\le  x^{\frac{3}{4}}}
\sum_{\frac{x}{p}-1<m\le\frac{x}{p}}\phi(m)\\
&=
\sum_{R<p\le  x^{\frac{3}{4}}}
\biggl(
\sum_{m\le\frac{x}{p}}\phi(m)
-
\sum_{m\le\frac{x}{p}-1}\phi(m)
\biggr)
\end{align}
for large $x$.
By using the notation of \cref{lem:Euler_phi_remainder} and noting that
\[
\frac{1}{2\zeta(2)}\biggl(\frac{x}{p}\biggr)^{2}
-
\frac{1}{2\zeta(2)}\biggl(\frac{x}{p}-1\biggr)^{2}
=
\frac{1}{\zeta(2)}\frac{x}{p}-\frac{1}{2\zeta(2)},
\]
we have
\begin{align}
{\sum}_{2}
&=
\frac{1}{\zeta(2)}x\sum_{R<p\le x^{\frac{3}{4}}}\frac{1}{p}
+R^{(0)}-R^{(1)}
+O\biggl(
x\exp(-c\sqrt{\log x})\sum_{R<p\le  x^{\frac{3}{4}}}\frac{1}{p}
\biggr)\\
&=
\frac{1}{3\zeta(2)}x\log\log x
+R^{(0)}-R^{(1)}+O(x\log\log\log x),
\end{align}
where
\begin{align}
R^{(\delta)}
&\coloneqq
\sum_{R<p\le x^{\frac{3}{4}}}E\biggl(\frac{x}{p}-\delta\biggr)
\end{align}
for $\delta=0,1$.

%%%%%%%%%%%%%%%%%%%%%%%%%%%%%%%%%%%%%%%%
For the sum ${\sum}_{3}$, we just have
\[
{\sum}_{3}
\le
x
\sum_{x^{\frac{3}{4}}<p\le x}\frac{1}{p}
\ll
x.
\]

%%%%%%%%%%%%%%%%%%%%%%%%%%%%%%%%%%%%%%%%
By combining the above results, we get
\begin{align}
\sum_{p\le x}\phi\biggl(\biggl[\frac{x}{p}\biggr]\biggr)
&\le
\biggl(\frac{1}{\zeta(2)}+\frac{2}{3}\biggl(1-\frac{1}{\zeta(2)}\biggr)\biggr)x\log\log x
+R^{(0)}-R^{(1)}
+O(x\log\log\log x),\\
\sum_{p\le x}\phi\biggl(\biggl[\frac{x}{p}\biggr]\biggr)
&\ge
\biggl(\frac{1}{\zeta(2)}-\frac{2}{3}\frac{1}{\zeta(2)}\biggr)x\log\log x
+R^{(0)}-R^{(1)}
+O(x\log\log\log x).
\end{align}
We can then get the bounds
\[
R^{(0)},
R^{(1)}
\ll
x
\]
similarly to the proof of \cref{thm:phi_xn_Titchmarsh_vonMangoldt}
and completes the proof.
\end{proof}

%%%%%%%%%%%%%%%%%%%%%%%%%%%%%%%%%%%%%%%%
\subsection*{Acknowledgments}
The authors would like to thank {Professor Teerapat Srichan}
for helpful discussion.
The authors also express their gratitude to {Professor Jie Wu}
for his encouragement.
This work was supported by JSPS KAKENHI Grant Numbers
JP19K23402,
JP21K13772,
JP22K13900,
JP22J00025,
JP22J00339,
JP22KJ0375,
JP22KJ1621.

%%%%%%%%%%%%%%%%%%%%%%%%%%%%%%%%%%%%%%%%
\ifendnotes
\newpage
\begingroup
\parindent 0pt
\parskip 2ex
\def\enotesize{\normalsize}
\theendnotes
\endgroup
\fi

%%%%%%%%%%%%%%%%%%%%%%%%%%%%%%%%%%%%%%%%
%\bibliographystyle{amsplain}
%\bibliography{xn_sum}
\providecommand{\bysame}{\leavevmode\hbox to3em{\hrulefill}\thinspace}
\providecommand{\MR}{\relax\ifhmode\unskip\space\fi MR }
% \MRhref is called by the amsart/book/proc definition of \MR.
\providecommand{\MRhref}[2]{%
  \href{http://www.ams.org/mathscinet-getitem?mr=#1}{#2}
}
\providecommand{\href}[2]{#2}

\pagebreak

\begin{flushleft}
{\small
\textsc{%
Kota Saito\\
Faculty of Pure and Applied Sciences, University of Tsukuba,\\
1-1-1 Tennodai, Tsukuba, Ibaraki 305-8577, Japan.
}

\textit{E-mail address}: \texttt{saito.kota.gn@u.tsukuba.ac.jp}
}
\bigskip

{\small
\textsc{%
Yuta Suzuki\\
Department of Mathematics, Rikkyo University,\\
3-34-1 Nishi-Ikebukuro, Toshima-ku, Tokyo 171-8501, Japan.
}

\textit{E-mail address}: \texttt{suzuyu@rikkyo.ac.jp}
}
\bigskip

{\small
\textsc{%
Wataru Takeda\\
Department of Applied Mathematics, Tokyo University of Science,\\
1-3 Kagurazaka, Shinjuku-ku, Tokyo 162-8601, Japan.
}

\textit{E-mail address}: \texttt{w.takeda@rs.tus.ac.jp}
}
\bigskip

{\small
\textsc{%
Yuuya Yoshida\\
Nagoya Institute of Technology,\\
Gokiso-cho, Showa-ku, Nagoya 466-8555, Japan.
}

\textit{E-mail address}: \texttt{yyoshida9130@gmail.com}
}
\end{flushleft}
\end{document}